\documentclass[11pt,twoside]{article}
\usepackage{amsmath, amsfonts, amsthm, amssymb, amscd,mathrsfs}

\usepackage{bbm}

\usepackage{hyperref}
%

\makeatletter
\newdimen\rh@wd
\newdimen\rh@hta
\newdimen\rh@htb
\newbox\rh@box
\def\rh@measure#1{\setbox\rh@box=\hbox{$#1$}\rh@wd=\wd\rh@box \rh@hta=\ht\rh@box}

\def\widecheck#1{\rh@measure{#1}%
  \setbox\rh@box=\hbox{$\widehat{\vrule height \rh@hta width\z@ \kern\rh@wd}$}%
  \rh@htb=\ht\rh@box \advance\rh@htb\rh@hta \advance\rh@htb\p@
  \ooalign{$\vrule height \ht\rh@box width\z@ #1$\cr
           \raise\rh@htb\hbox{\scalebox{1}[-1]{\box\rh@box}}\cr}}
\makeatother


\usepackage[most]{tcolorbox} 

\newtcolorbox{markbox}{%
     enhanced, breakable, size=minimal, parbox=false, after={\par}, 
     before upper={\indent}, colback=white, 
     overlay = {\draw[line width=2pt] (frame.north east) -|
                       ([xshift=3mm]frame.east)|-(frame.south east);},
     overlay first={\draw[line width=2pt] (frame.north east) -|
                           ([xshift=3mm]frame.south east);},
     overlay middle={\draw[line width=2pt] ([xshift=3mm]frame.north east) -- 
                              ([xshift=3mm]frame.south east);},
     overlay last={\draw[line width=2pt] ([xshift=3mm]frame.north east)|-
                          (frame.south east);},
}
  
\numberwithin{equation}{section}  
\usepackage{perpage} 
\MakePerPage{footnote}  
\oddsidemargin 	.in
\evensidemargin.in
\textwidth  6.05in
\topmargin    - 0.75in
\textheight	9.35in
\numberwithin{equation}{section}
        \newtheorem{theorem}{Theorem}[section]
        \newtheorem{proposition}[theorem]{Proposition}
        \newtheorem{lemma}[theorem]{Lemma}
         
        \newtheorem{definition}[theorem]{Definition}

\let\oldmarginpar\marginpar
\renewcommand\marginpar[1]{\-\oldmarginpar[\raggedleft\footnotesize #1]
{\raggedright\footnotesize #1}}

\newcommand \bei {\begin{itemize}}

\newcommand \eei {\end{itemize}}

\newcommand \be {\begin{equation}}
\newcommand \bel {\be\label}
\newcommand \ee {\end{equation}}
\newcommand \la \langle
\newcommand \ra \rangle

\newcommand	\RR 		{\mathbb R}

\newcommand \del {{\partial}}

\newcommand \eps \epsilon

\newcommand \wh {\widehat{w}} 



\begin{document}


\title{Global solution to the wave and Klein-Gordon system under null condition in dimension two}

\author{Shijie Dong\footnote{Fudan University, School of Mathematical Sciences. 
Email: dongs@ljll.math.upmc.fr, shijiedong1991@hotmail.com.
}}
\date{\today}

\maketitle

\begin{abstract} 
We are interested in studying the coupled wave and Klein-Gordon equations with null quadratic nonlinearities in $\RR^{2+1}$. We want to establish the small data global existence result, and in addition, we also demonstrate the pointwise asymptotic behaviour of the solution to the coupled system. The initial data are not required to have compact support, and this is achieved by applying the Alinhac's ghost weight method to both the wave and the Klein-Gordon equations.
\end{abstract}
\maketitle 

\tableofcontents


\

\section{Introduction}
\label{sec:1}

\paragraph{Model of interest}

We are interested in the following coupled wave and Klein-Gordon equations
\bel{eq:model2d}
\aligned
- \Box u &=  P_1^{\alpha \beta} Q_{\alpha \beta} (u, v),
\\
- \Box v + v &=  P_2^{\alpha \beta} Q_{\alpha \beta} (u, v),
\endaligned
\ee
where 
$$
Q_{\alpha \beta} (u, v) = \del_\alpha u \del_\beta v - \del_\alpha v \del_\beta u,
\quad
Q_0 (u, v) = \del_\alpha u \del^\alpha v \text{ (to be used later)}
$$
represent the classical null forms, and $P_1^{\alpha \beta}, P_2^{\alpha \beta}$ are constants.

The prescribed initial data are denoted by 
\bel{eq:ID2d}
\big( u, \del_t u \big)(t_0, \cdot) = (u_0, u_1),
\quad
\big( v, \del_t v \big)(t_0, \cdot) = (v_0, v_1).
\ee

Our goal is to show the small data global existence result for the system \eqref{eq:model2d} (without compactness assumption on the initial data), and to study the pointwise asymptotic behaviour of the solution $(u, v)$.

Throughout of the paper, we use $A \lesssim B$ to denote $A \leq C B$ with $C$ a generic constant, and use the notation $\langle s \rangle = \sqrt{1+ |s|^2}$. The spacetime indices are represented by $\alpha, \beta, \gamma \in \{ 0, 1, 2\}$, while the space indices are denoted by $a, b, c \in \{1, 2\}$, and the Einstein summation convention is adopted unless otherwise specified. As usual, we use $L^p, W^{k, p}$ (with abbreviation $H^k = W^{k, 2}$) to denote the standard Sobolev spaces, and we might use the notation $\| \cdot \| = \| \cdot \|_{L^2(\RR^2)}$ for simple illustration.

\paragraph{Brief history}

After the seminal work on nonlinear wave and nonlinear Klein-Gordon equations in $\RR^{3+1}$, by Klainerman \cite{Klainerman86} and Christodoulou \cite{Christodoulou}, and by Klainerman \cite{Klainerman85} and Shatah \cite{Shatah}, various exciting results on nonlinear wave equations, nonlinear Klein-Gordon equations, and their coupled systems come out.
In \cite{Bachelot}, Bachelot considered a Dirac-wave-Klein-Gordon system in $\RR^{3+1}$, and then in \cite{Georgiev}, Georgiev studied the coupled wave and Klein-Gordon equations \eqref{eq:model2d} with strong null nonlinearities (i.e. nonlinearities of type $Q_{\alpha \beta}$) in dimension $\RR^{3+1}$, where the initial data are assumed to be compactly supported. The study in \cite{Bachelot, Georgiev} was generalised by \cite{Katayama12a} and many others in $\RR^{3+1}$, where more general nonlinearities were studied.

Due to the fact that the wave components and the Klein-Gordon components decay slower in low dimensions, the study of coupled wave and Klein-Gordon systems has crucial difficulties in $\RR^{2+1}$.
Recently, Ma \cite{YM1, YM2} has initialised, as far as we know, the study of coupled (quasilinear) wave and Klein-Gordon systems in $\RR^{2+1}$ using the hyperboloidal foliation method \cite{LM0, LM1, Wang}, and obtained global existence results (under the compactness assumption on the initial data) for such systems, and then extended the study to more types of (semilinear) nonlinearities,  including null forms, in \cite{YM18, YM0}. The hyperboloidal foliation method, dating back to Klainerman \cite{Klainerman85} and Hormander \cite{Hormander}, turns out to be very powerful in stduying coupled wave and Klein-Gordon systems in $\RR^{2+1}$. On the other hand, Stingo and Ifrim have also investigated the quasilinear wave and Klein-Gordon systems under the null condition using (mainly) the Fourier analysis method, and obtained the global existence result (first such result without compactness assumption) in \cite{Stingo18} and almost global existence result in \cite{Stingo19}. 
Worth to mention, there also exist many results on nonlinear wave equations in $\RR^{2+1}$, see for instance \cite{Godin, Alinhac1, Alinhac2, Cai, Zha, Yin, Yang}.

Motivated by the existing results on coupled wave and Klein-Gordon systems, our prime goal is to show the global existence result for the semilinear wave and Klein-Gordon equations under null conditions in dimension $\RR^{2+1}$, where the decay of wave and Klein-Gordon components are slower, and which will be considered to be more difficult to handle, and by relying on new techniques we do not need the compactness assumption on the initial data.
Our argument is also expected to be applicable to more types of nonlinearities for the coupled wave and Klein-Gordon systems, which will appear in the future work.

\paragraph{Major difficulties and key ideas}

We now revisit the major difficulties arising in studying coupled wave and Klein-Gordon equations in $\RR^{2+1}$ using Klainerman's vector field method.

First, we recall that the $L^2$--type norm of the wave components cannot be bounded by the natural energy, and the following Hardy--type inequality
$$
\big\| u / |t-|x| | \big\|_{L^2(\RR^2)}
\lesssim
\big\| \del u  \big\|_{L^2(\RR^2)}
$$ 
can be used to bound the $L^2$--type norm for the wave components, see for instance \cite{Lindblad}, but the compactness assumption on the solution is required. 
In order to obtain the $L^2$--type norm bounds for the wave component without the compactness assumption, we will rely on the hidden divergence form structure of the nonlinearities $Q_{\alpha \beta} (u, v)$ (see \cite{Katayama12a}), i.e. 
$$
Q_{\alpha \beta} (u, v)
=
\del_\beta \big( \del_\alpha u v \big) - \del_\alpha \big( \del_\beta u v \big),
$$
to achieve this. And we find that one can obtain the wave decay by applying the Klainerman-Sobolev inequality in \cite{Klainerman852} (see Proposition \ref{prop:K-S}). 
However, due to the use of the Klainerman-Sobolev inequality in Proposition \ref{prop:K-S}, an iteration procedure is expected.

Second, when treating the coupled wave and Klein-Gordon systems, the scaling vector field $L_0 = S = t\del_t + x_a \del^a$ is in general avoided to use, which is due to the fact that scaling vector field does not commute with the Klein-Gordon operator. However, we find that the conformal energy (together with other observations) of the wave component allows us to bound the $L^2$ norm of $S u$, which further allows us to treat the null form $Q_0 (u, v)$, see \eqref{eq:null000}. Worth to mention, combined with the Klainerman-Sobolev inequality in Proposition \ref{prop:K-S} we are able to get the $L^\infty$ norm of $S u$.
To be more precise, in order to estimate the null form $Q_0 (u, v)$, we rely on the estimates in Lemma \ref{lem:null} to have
$$
\langle t + |x|\rangle |Q_0(u, v)|
\lesssim
\big( | \Gamma u | +| L_0 u| \big)  \sum_{a} \big( | L_a v | + | \del v| \big),
$$
in which $\Gamma \in \{L_a, \Omega_{ab}, \del_\alpha \}$. However, the difficulty lies in estimating (the $L^2$ norm of) the term $S u$. Recall that the conformal energy for wave component in $\RR^{2+1}$ is of the form
$$
E_{con} (t, u)
=
\| S u + u \|^2_{L^2(\RR^2)} + \sum_{a<b} \| \Omega_{ab} u \|^2_{L^2(\RR^2)} + \sum_a \| L_a u \|^2_{L^2(\RR^2)}. 
$$
We still cannot get estimate on $S u$ at this stage. But thanks to the hidden divergence form structure of the null forms $Q_{\alpha\beta}$, we can first obtain the $L^2$ norm estimate of $u$, and then using the simple triangle inequality to get the $L^2$ norm estimate on $S u$. 

Another difficulty lies in that when wave equations are coupled with the Klein-Gordon equations we might lose the $\langle t-|x|\rangle$ decay for the wave component (see the Klainerman-Sobolev inequality in Proposition \ref{prop:K-S}). However, we surprisingly find that the $\langle t-|x|\rangle$ decay can be retained by first obtaining the pointwise bound for $L_0 u$, and then by relying on the fact that
$$
\langle t- |x| \rangle |\del u|
\lesssim \big| L_0 u \big| + \big| \Gamma u \big|,
\qquad
\Gamma = \del_\alpha, L_a, \Omega_{ab}.
$$
Thus, we are allowed to gain the $\langle t- |x| \rangle$ decay for the wave components with partial derivatives $\del u$.

In addition, it is not clear how to bound the highest order energies, which is because we cannot rely on the null estimates in Lemma \ref{lem:null} in the highest order cases (due to the presence of the Klein-Gordon component $v$). Fortunately, we find that the null forms can be bounded by
$$
\big| Q_0(u, v) \big| + \big| Q_{\alpha\beta}(u, v) \big| 
\lesssim
\sum_a \big| \del_a u + x_a \del_t u/ |x| \big| \big| \del v \big|
+
\sum_a \big| \del_a v + x_a \del_t v/ |x| \big| \big| \del u \big|,
$$
and we also find the Alinhac's ghost weight method \cite{Alinhac1, Alinhac2} can be applied to Klein-Gorodn equations, and hence we are able to utilise the ghost weight energy estimates. However one more problem arises: the application of the ghost weight method demands the estimate
$$
\big| \del u \big|
\lesssim \big( 1 + \big|t-|x| \big| \big)^{-1/2 + \delta_1} (1+t)^{-1/2}
$$
to be true. In order to achieve this, on one hand, we rely on the hidden divergence form structure of the null forms $Q_{\alpha \beta} (u, v)$ again and the estimates of $\del \del u$ (see Lemma \ref{lem:ddu}) within the region $\{(t, x) : |x| \leq 2t \}$, and on the other hand we rely on the pointwise decay estimates of $L_0 u$ in the region $\{(t, x) : |x| \geq 2t \}$. More details are demonstrated in the analysis in Section \ref{sec:proof}. Worth to mention, we believe that the ghost weight method on the Klein-Gordon equations has other applications.

\paragraph{Main theorem}

We are now ready to state the main result.

\begin{theorem}\label{thm:main}[Global existence result for the coupled wave and Klein-Gordon equations]
Consider the coupled wave and Klein-Gordon system \eqref{eq:model2d}, and let $N \geq 14$ be an integer. Then for any $\delta>0$, there exits $\eps_0 >0$, such that for all $\eps < \eps_0$, and all initial data satisfying the smallness condition
\be 
\aligned
&\sum_{k \leq N+1} \Big( \big\| \langle |x| \rangle^k \nabla^k u_0 \big\|_{L^1\bigcap L^2} 
+
\big\| \langle |x| \rangle^{k+1} \nabla^k v_0 \big\|_{L^2} \Big)
\\
+
&\sum_{k\leq N}
\Big( \big\| \langle |x| \rangle^{k+1} \nabla^k u_1 \big\|_{L^1\bigcap L^2} 
+
\big\| \langle |x| \rangle^{k+2} \nabla^k v_1 \big\|_{L^2} \Big)
\leq \eps,
\endaligned
\ee
with $\nabla = (\del_a)$, the Cauchy problem \eqref{eq:model2d}--\eqref{eq:ID2d} admits a global-in-time solution $(u, v)$, which satisfies the following pointwise decay results
\bel{eq:thm-decay}
|v(t, x)| \lesssim \langle t\rangle^{-1},
\quad
|u(t, x) | \lesssim \langle t \rangle^{-1/2 + \delta},
\quad
|\del u(t, x) | \lesssim \langle t-|x| \rangle^{-3/4} \langle t \rangle^{-1/2}.
\ee

\end{theorem}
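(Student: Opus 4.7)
The plan is a bootstrap/continuity argument on high-order vector field norms of $(u,v)$. I would impose bootstrap assumptions of the form
\be
\sum_{|I|\le N}\bigl(\|\del\Gamma^I u\|+\|\del\Gamma^I v\|+\|\Gamma^I v\|\bigr)+\sum_{|I|\le N-2}E_{con}(t,\Gamma^I u)^{1/2}\le C_1\eps\la t\ra^{\delta},
\ee
together with a weighted $L^1\cap L^2$ control on $\Gamma^I u$ itself for $|I|\le N-1$ and the Alinhac ghost-weight spacetime integrals of both $u$ and $v$. The admissible commuting vector fields are $\Gamma\in\{\del_\alpha,L_a,\Omega_{ab}\}$ on the Klein-Gordon component (since $S=L_0$ does not commute with the Klein-Gordon operator), while $S$ is also allowed on the wave component, its $L^2$ norm being recovered from the conformal energy. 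Closing the bootstrap amounts to improving each constant from $C_1$ to $\tfrac12 C_1$.

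Pointwise decay follows from the Klainerman-Sobolev inequality of Proposition \ref{prop:K-S}. For $v$ the usual $\la t\ra^{-1}$ rate is immediate from the Klein-Gordon energy. For $u$, because the data are not compactly supported the standard Hardy inequality is unavailable; instead I exploit the hidden divergence form $Q_{\alpha\beta}(u,v)=\del_\beta(\del_\alpha u\,v)-\del_\alpha(\del_\beta u\,v)$ to rewrite $-\Box u$ as a spacetime divergence, from which a Duhamel argument with the $\RR^{2+1}$ wave kernel produces $L^1\cap L^2$ control of $\Gamma^I u$ without any support assumption. Feeding this into Proposition \ref{prop:K-S} yields $|u|\lesssim\la t\ra^{-1/2+\delta}$; the conformal energy combined with this $L^2$ bound on $u$ then provides, via the triangle inequality, the $L^2$ (and hence $L^\infty$) bound on $Su$ needed to absorb the strong null form $Q_0(u,v)$ through the identity of Lemma \ref{lem:null}.

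To close the energy bootstrap I would apply Alinhac's ghost-weight method to \emph{both} equations. For the wave component this gives the standard spacetime flux gain $\sum_a\|(\del_a+\tfrac{x_a}{|x|}\del_t)\Gamma^I u\|_{L^2_{t,x}}$, and a key observation is that the same weight $e^{q(|x|-t)}$ also yields an analogous good-derivative gain for $v$. Combined with the decomposition
\[
|Q_0(u,v)|+|Q_{\alpha\beta}(u,v)|\lesssim\sum_a\bigl|\del_a u+\tfrac{x_a}{|x|}\del_t u\bigr|\,|\del v|+\sum_a\bigl|\del_a v+\tfrac{x_a}{|x|}\del_t v\bigr|\,|\del u|,
\]
the nonlinear contributions become spacetime-integrable provided one has the sharp pointwise bound $|\del u|\lesssim\la t-|x|\ra^{-1/2+\delta_1}\la t\ra^{-1/2}$, which is the principal obstacle. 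Inside $\{|x|\le 2t\}$ I would derive the $\la t-|x|\ra$ factor from the divergence representation and the estimate for $\del\del u$ in Lemma \ref{lem:ddu}, whereas in the exterior $\{|x|\ge 2t\}$ I would use the pointwise decay of $L_0 u$ inherited from the conformal energy together with the algebraic identity $\la t-|x|\ra|\del u|\lesssim|L_0 u|+|\Gamma u|$.

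At the top order $|I|=N$ the null-form lemma cannot be applied directly because it would cost an extra derivative on the Klein-Gordon factor; for those terms I estimate $\Gamma^I Q_{\alpha\beta}$ using only the good-derivative decomposition above and pay exclusively with the ghost-weight flux. Iterating the pointwise/energy loop finitely many times then removes the $\delta$-loss from $|\del u|$ and produces the asymptotic rates \eqref{eq:thm-decay}; global existence follows from the standard continuation criterion.
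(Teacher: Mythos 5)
Your roadmap correctly identifies almost all of the paper's key technical ingredients: the hidden divergence structure of $Q_{\alpha\beta}$ to bound $\|u\|_{L^2}$ without compact support, the conformal energy plus triangle inequality to recover $\|Su\|_{L^2}$, the ghost-weight estimate applied to \emph{both} the wave and Klein-Gordon equations, the $G_a$-decomposition of the null forms at top order, and the interior/exterior split (Lemma~\ref{lem:ddu} for $|x|\le 2t$, pointwise $L_0u$ decay for $|x|\ge 2t$) to obtain $\langle t-|x|\rangle$ decay for $\del u$.

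The genuine gap is in the overall architecture. You propose a standard bootstrap/continuity argument, but the decay mechanism in the paper rests entirely on the Klainerman--Sobolev inequality of Proposition~\ref{prop:K-S}, which bounds $|u(t,x)|$ by $\sup_{0\le s\le 2t}\|\Gamma^I u(s)\|_{L^2}$. This is an \emph{anticipatory} inequality: to get pointwise control at time $t$ it requires $L^2$ information on the future interval $[t,2t]$. In a continuity argument, the bootstrap assumptions hold only on some $[0,T]$, so the inequality yields improved pointwise bounds only on $[0,T/2]$, and the continuity step cannot close. The paper explicitly flags this issue as the reason for abandoning the bootstrap in favor of a Banach fixed-point iteration: a solution space $X$ is defined with a norm encoding all the weighted energies, ghost-weight fluxes, and pointwise bounds of Definition~\ref{def:X}; the map $T(m,n)=(\phi,\psi)$ solves the \emph{linear} system with source $Q_{\alpha\beta}(m,n)$ for a prescribed pair $(m,n)\in X$ that exists for all time, so Proposition~\ref{prop:K-S} can be applied freely; Lemmas~\ref{lem:map01}--\ref{lem:map09} then show $T$ maps $X$ into itself and is a contraction. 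Your proposal would need to either replace Proposition~\ref{prop:K-S} by a Klainerman--Sobolev inequality that is local in time (which would reintroduce the scaling field $S$ and its incompatibility with Klein--Gordon) or adopt the fixed-point framework as the paper does.

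A minor inaccuracy: the final bound $|\del u|\lesssim\langle t-|x|\rangle^{-3/4}\langle t\rangle^{-1/2}$ is not obtained by ``iterating the pointwise/energy loop finitely many times to remove the $\delta$-loss.'' In the paper it comes directly from superposing two estimates -- the global $\langle t-|x|\rangle^{-1}\langle t\rangle^{-1/2+\delta}$ bound and an interior $\langle t-|x|\rangle^{-1}\langle t\rangle^{-1/2}$ bound coming from the $\del\del\phi^\gamma$ estimate -- and observing that in the exterior $\langle t-|x|\rangle\gtrsim\langle t\rangle$ absorbs the $\delta$-loss. No finite iteration is involved, and the $\delta$-loss in $|u|\lesssim\langle t\rangle^{-1/2+\delta}$ is not removed in the final statement.
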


Nevertheless the slow decay nature of the wave and the Klein-Gordon components in $\RR^{2+1}$, we can still get the global-in-time solution, as well as pointwise decay results of the solution, to the system \eqref{eq:model2d} without compactness restrictions on the initial data. Together with the theorem in \cite{Katayama12a}, we know the global existence result to the system \eqref{eq:model2d} (with no compactness assumptions) is valid in all $\RR^{n+1}$, with $n\geq 2$. To the best of our knowledge, whether such result to the system \eqref{eq:model2d} holds in $\RR^{1+1}$ is still unknown. But since, as far as we know, there does not exist any (nontrivial) blow-up result on the coupled wave and Klein-Gordon systems in any dimension, so we believe the answer to the $\RR^{1+1}$ question is also positive.

\paragraph{Organisation}

The rest of the paper is planned as follows. In Section \ref{sec:pre}, we revisit some notations and some basic results on the wave and Klein-Gordon equations. Then in Section \ref{sec:linear}, we prepare some key results on estimating the $L^2$ and the $L^\infty$ estimates for the linear wave equations. Finally, we provide the proof for Theorem \ref{thm:main} by relying on the fixed point theorem in Section \ref{sec:proof}.


\section{Preliminaries}\label{sec:pre}

\subsection{Basic notations}

In the $(2+1)$ dimensional spacetime, we adopt the signature $(-, +, +)$. We denote a point in $\RR^{2+1}$ by $(x_0, x_1, x_2) = (t, x_1, x_2)$, and denote its spacial radius by $r = \sqrt{x_1^2 + x_2^2}$.

In order to apply Klainerman's vector field method, we first introduce the vector fields:
\bei
\item Translations: $\del_\alpha$, \quad $\alpha = 0, 1, 2$.

\item Rotations: $\Omega_{ab} = x_a \del_b - x_b \del_a$,  \quad $a, b = 1, 2$.

\item Lorentz boosts: $L_a = x_a \del_t + t \del_a$, \quad $a = 1, 2$.

\item Scaling vector field: $L_0 = S = t \del_t + r \del_r$.

\eei
We will use $\Gamma$ to denote a general vector field (not the scaling vector field $L_0$) in 
$$
V := \{ \del_\alpha, \Omega_{ab}, L_a \}.
$$
In addition, we also introduce the notation of (the ghost derivative) 
$$
G_a 
:= r^{-1} \big(x_a \del_t + r \del_a \big),
$$ 
which appears in the Alinhac's ghost weight method.

Given a sufficiently nice function $w = w(t, x)$, we define its energy on the constant time slice $t = constant$ by
\be 
E_m(t, w)
:=
\int_{\RR^2} \Big(|\del_t w|^2 + \sum_a |\del_a w|^2 \Big) \, dx.
\ee
For abbreviation, we use the notation
$$
E(t, w) = E_0 (t, w).
$$

\subsection{Estimates for commutators and null forms}

The following results for commutators will be frequently used, see \cite{Sogge}.

\begin{lemma}
For any $\Gamma', \Gamma'' \in V$ we have
\be 
[\Box, \Gamma'] = 0,
\qquad
\big| [\Gamma', \Gamma''] w \big| \lesssim  \big| \Gamma w \big|,
\qquad
\big| [\Gamma, \del] w \big| + \big| [L_0, \del] w \big| \lesssim  \big| \del w \big|,
\ee
in which $w$ is sufficiently nice function.
In addition, if we act the vector field $\Gamma$ on the null forms, we further have
\be
\aligned
\big| \Gamma Q_0 (u, v) 
- Q_0 (\Gamma u, v) - Q_0 (u, \Gamma v) \big|
&= 0,
\\
\big| \Gamma Q_{\alpha \beta} (u, v) 
- Q_{\alpha \beta} (\Gamma u, v) - Q_{\alpha \beta} (u, \Gamma v) \big|
&\leq \sum_{\alpha', \beta'} \big| Q_{\alpha' \beta'} (u, v) \big|.
\endaligned
\ee
\end{lemma}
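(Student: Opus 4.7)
The approach I would take is to observe that all parts of this lemma are essentially bookkeeping identities, verified by case analysis on the finitely many vector fields in $V \cup \{L_0\}$ together with the Leibniz rule; there is nothing deep here, but the cases have to be laid out carefully. I would organize the proof into four blocks: (i) $[\Box,\Gamma'] = 0$, (ii) $[\Gamma',\Gamma'']$ among vector fields in $V$, (iii) commutators of $\Gamma$ and $L_0$ with $\del$, and (iv) the Leibniz-type identities for the null forms.

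For (i), I would simply note that translations $\del_\alpha$ commute with $\Box$ because $\Box$ has constant coefficients, and that the rotations $\Omega_{ab}$ and Lorentz boosts $L_a$ are generators of isometries of the Minkowski metric on $\RR^{2+1}$, hence are Killing fields and commute with the d'Alembertian. For (ii), I would simply tabulate the commutators: $[\del_\alpha,\del_\beta]=0$, $[\del_t,L_a]=\del_a$, $[\del_b,L_a]=\delta_{ab}\del_t$, $[\del_c,\Omega_{ab}]=\delta_{ca}\del_b-\delta_{cb}\del_a$, $[L_a,L_b]=-\Omega_{ab}$, $[L_a,\Omega_{bc}]=\delta_{ab}L_c-\delta_{ac}L_b$, $[\Omega_{ab},\Omega_{cd}]$ a combination of rotations, all of which lie in $V$ so that $|[\Gamma',\Gamma'']w|\lesssim|\Gamma w|$ is immediate. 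The additional commutator $[L_0,\del_\alpha]=-\del_\alpha$ is a direct computation, giving (iii) as well.

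For (iv), the translation case is trivial by the Leibniz rule since $[\del_\gamma,\del_\alpha]=0$. For $\Omega_{ab}$ and $L_a$ I would apply Leibniz and then rewrite the residual terms using the commutators computed in (iii). The key point for $Q_0(u,v)=\del_\alpha u\,\del^\alpha v$ is that $L_a$ and $\Omega_{ab}$ are Killing, so the sum of commutator residues $-[\del_\mu,\Gamma]u\,\del^\mu v - \del_\mu u\,[\del^\mu,\Gamma]v$ vanishes identically; for example for $L_a$ the two pieces arising at $\mu=0$ and $\mu=b$ cancel after contracting with the Minkowski metric (this is precisely the Lorentz invariance of $Q_0$). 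For $Q_{\alpha\beta}=\del_\alpha u\,\del_\beta v-\del_\alpha v\,\del_\beta u$ there is no such exact cancellation, but the leftover terms have the form $\pm(\delta\text{-symbol})\,\del_{\alpha'}u\,\del_{\beta'}v$ minus the swap of $u,v$, which is again a sum of null forms $Q_{\alpha'\beta'}(u,v)$; collecting these gives the stated inequality.

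The only step that requires any care is the last one, matching the residual Leibniz terms for $Q_{\alpha\beta}$ against $\sum_{\alpha',\beta'}|Q_{\alpha'\beta'}(u,v)|$ after acting by $L_a$ or $\Omega_{ab}$; the antisymmetric structure of $Q_{\alpha\beta}$ is what makes the $u \leftrightarrow v$ asymmetric residues reassemble into null forms rather than into undifferentiated products. Everything else is a finite table of commutators, so I do not expect any real obstacle beyond keeping track of signs in the signature $(-,+,+)$.
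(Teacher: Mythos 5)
Your proof is correct and follows the standard case-by-case verification that the paper delegates to \cite{Sogge} without spelling out. One minor slip: with the conventions $L_a = x_a\del_t + t\del_a$ and $\Omega_{ab} = x_a\del_b - x_b\del_a$, one finds $[L_a,L_b] = \Omega_{ab}$ rather than $-\Omega_{ab}$; the sign is immaterial for the estimate $|[\Gamma',\Gamma'']w|\lesssim|\Gamma w|$, and the rest of your table, the Killing-field argument for $[\Box,\Gamma']=0$, the exact cancellation for $Q_0$ via the $(-,+,+)$ contraction, and the reassembly of the $Q_{\alpha\beta}$ residues into $\pm Q_{\alpha'\beta'}(u,v)$ using antisymmetry are all sound.
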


In order to estimate null forms, we need the following lemma which gives very detailed estimates on the null forms and can be found in \cite{Sogge, Yin} for example.

\begin{lemma}\label{lem:null}
It holds that
\bel{eq:null000} 
\aligned
|Q_0(u, v)|
\lesssim
& \langle t+|x|\rangle^{-1} \big(\big| L_0 u \Gamma v \big| + \big| \Gamma u \Gamma v  \big|\big),
\\
|Q_{\alpha\beta}(u, v)|
\lesssim
&\langle t+|x|\rangle^{-1} \big( \big| \Gamma v \del u \big| + \big| \Gamma u \del v \big| \big).
\\
|Q_0 (u, v)| + |Q_{\alpha\beta} (u, v) |
\lesssim
& \sum_a \big( \big| G_a u \big| |\del v| + \big| G_a v \big| |\del u| \big).
\endaligned
\ee

\end{lemma}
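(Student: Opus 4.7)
All three displayed inequalities follow from elementary algebraic identities relating $\partial_t, \partial_a$ to the generators $L_0, L_a, \Omega_{ab}, G_a$, together with the (anti)symmetric structure of the null forms; the plan is to treat the three bounds in sequence.

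For the $Q_0$ estimate, I would multiply $Q_0(u,v) = -\partial_t u\,\partial_t v + \partial_a u\,\partial_a v$ by $t$ and substitute $t\partial_t u = L_0 u - x_b\partial_b u$ in the first factor and $t\partial_a v = L_a v - x_a\partial_t v$ in the second. The two cross terms proportional to $x_a\partial_a u\,\partial_t v$ coming from the two substitutions cancel after relabeling the dummy summation index, leaving the clean identity
\begin{equation*}
tQ_0(u,v) = -L_0 u\,\partial_t v + L_a v\,\partial_a u,
\end{equation*}
from which $t|Q_0|\lesssim |L_0 u||\Gamma v| + |\Gamma u||\Gamma v|$ is immediate since $\partial_\alpha\in V$. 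An analogous one-factor substitution gives $tQ_{0a}(u,v) = \partial_t u\,L_a v - \partial_t v\,L_a u$ directly (no $L_0$ needed), and averaging the two natural substitutions into $Q_{ab}$ (one per factor) yields a similar identity modulo a $Q_{0c}$ residual already controlled; this establishes the interior version of the first two bounds with $t^{-1}$ in place of $\langle t + |x|\rangle^{-1}$.

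To upgrade $t^{-1}$ to $\langle t + |x|\rangle^{-1}$, I would cover the exterior regime $|x|\geq t$ using the invertible linear system
\begin{equation*}
\begin{pmatrix} t & r\\ r & t\end{pmatrix}\begin{pmatrix}\partial_t\\ \partial_r\end{pmatrix}u = \begin{pmatrix} L_0 u\\ \sum_a(x_a/r)L_a u\end{pmatrix},
\end{equation*}
whose determinant is $t^2 - r^2$, of size $\sim (t+r)^2$ when $r\gg t$, together with the angular identity $t\Omega_{12}u = x_1 L_2 u - x_2 L_1 u$ to recover $\langle t+|x|\rangle$--weighted derivative bounds in that region; the remaining bounded zone $\langle t+|x|\rangle\lesssim 1$ is immediate from $|\partial w|\lesssim |\Gamma w|$.

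For the ghost-weight inequality I would use the pointwise decomposition $\partial_a = G_a - (x_a/r)\partial_t$. Substituted into $Q_0(u,v)$, the collected term $\sum_a(x_a^2/r^2)\partial_t u\,\partial_t v = \partial_t u\,\partial_t v$ exactly cancels the explicit $-\partial_t u\,\partial_t v$, leaving only products that contain at least one $G_a$ factor; applied to $Q_{\alpha\beta}$ the corresponding $(x_ax_b/r^2)\partial_t u\,\partial_t v$ cross terms cancel by the antisymmetry $Q_{\alpha\beta}=-Q_{\beta\alpha}$, again leaving the stated form. The hard part will be the careful bookkeeping in the exterior region $|x|\geq t$ for the $\langle t + |x|\rangle^{-1}$ factor in the first two bounds; the algebraic identities themselves are routine linear algebra, and the ghost-weight argument is a one-line substitution plus cancellation.
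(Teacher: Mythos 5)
The paper does not prove this lemma itself; it refers the reader to Sogge's lecture notes and to Hou--Yin. So the comparison below is against the standard textbook argument rather than against text in the paper. Your interior identity $tQ_0(u,v) = -L_0 u\,\del_t v + \del_a u\, L_a v$ is correct and is exactly the right decomposition (note that $L_0$ lands only on $u$, which is what the first inequality demands); the $Q_{0a}$ identity $tQ_{0a} = \del_t u\, L_a v - L_a u\,\del_t v$ and the $G_a$-substitution argument for the third inequality (the $(x_a x_b/r^2)\del_t u\,\del_t v$ cancellation is exactly the antisymmetry of $Q$) are likewise correct. One small but real imprecision: the exterior regime must be taken as $\{|x|\ge 2t\}$ or similar, not $\{|x|\ge t\}$, since the system you invert is singular on the light cone $r = t$; the interior identity already covers $\{r\le 2t,\ t\ge 1\}$ and the bounded region $t+r\lesssim 1$ is trivial, so there is no gap in the coverage, but the dividing surface has to be stated away from the cone.

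The genuine gap is in the exterior treatment of the \emph{second} inequality. Inverting
$\bigl(\begin{smallmatrix} t & r\\ r & t\end{smallmatrix}\bigr)$
gives $|\del u|\lesssim \langle t+|x|\rangle^{-1}\bigl(|L_0 u| + |\Gamma u|\bigr)$ in $\{r\ge 2t\}$, which is fine for the first inequality since its right-hand side carries $|L_0 u\,\Gamma v|$; but the second inequality has no $L_0$ at all on its right side, and $|L_0 u|$ is \emph{not} controlled by $\sum_{\Gamma\in V}|\Gamma u|$ in the exterior (one has $L_0 = \tfrac{x_a}{r}L_a + (t-r)(\del_t - \del_r)$, so $|L_0 u|$ can be as large as $(t+r)|\del u|$ there). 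To get the $\langle t+|x|\rangle^{-1}$ gain for $Q_{\alpha\beta}$ in the exterior without $L_0$ you must use the antisymmetric structure directly: for $Q_{ab}$ in two space dimensions the rotational invariance of the determinant gives
$Q_{ab}(u,v) = r^{-1}\bigl(\del_r u\,\Omega_{ab} v - \Omega_{ab} u\,\del_r v\bigr)$,
and for $Q_{0a}$ one substitutes $\del_t = r^{-2}\sum_b x_b L_b - (t/r)\del_r$ (valid for $r>0$), which in $\{r\ge 2t\}$ gives
$Q_{0a} = r^{-1}\tfrac{x_b}{r}\bigl(L_b u\,\del_a v - \del_a u\, L_b v\bigr) + \tfrac{t x_b}{r^2}Q_{ab}(u,v)$,
so that $|Q_{0a}|\lesssim r^{-1}\bigl(|\Gamma u||\del v| + |\del u||\Gamma v|\bigr)$ after inserting the $Q_{ab}$ bound and using $t/r\le 1/2$. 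You should replace the matrix inversion with these two identities when proving the second inequality in the exterior. Also, the angular identity $t\Omega_{12} = x_1 L_2 - x_2 L_1$ you cite is true but not what is needed; the relevant fact is that the angular part of $\nabla$ is $r^{-1}\Omega_{12}$, which already carries the $\langle t+|x|\rangle^{-1}$ factor when $r\gtrsim t$.
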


\subsection{Sobolev--type inequalities}

Now, in order to obtaine the pointwise wave decay or Klein-Gorodn decay estimateswe from the weighted energy bounds we recall the following inequalities. We note that the importance of the inequalities below to coupled wave and Klein-Gordon equations is that we do not need to rely on the scaling vector field $L_0 = t \del_t + x^a \del_a$.

We first revisit the Klainerman-Sobolev inequality in \cite{Klainerman852}. We note that it is not required to use the scaling vector field $L_0$ in the right hand side $L^2$--type norms, so this version is very well adapted to the study of the coupled wave and Klein-Gordon systems. However, in the inequality \eqref{eq:K-S}, we need the future information till time $2t$ when deriving the pointwise bounds for the function at time $t$, and thus we rely on the fixed point iteration method to prove Theorem \ref{thm:main}.

\begin{proposition}\label{prop:K-S}
Let $u = u(t, x)$ be a sufficiently smooth function which decays sufficiently fast at space infinity for each fixed $t \geq 0$.
Then for any $t \geq 0$, $x \in \RR^2$, we have
\bel{eq:K-S}
|u(t, x)|
\lesssim \langle t \rangle^{-1/2} \sup_{0\leq s \leq 2t, |I| \leq 3} \big\| \Gamma^I u(s) \big\|_{L^2},
\qquad
\Gamma \in V = \{ L_a, \del_\alpha, \Omega_{ab} = x^a \del_b - x^b \del_a \}.
\ee
\end{proposition}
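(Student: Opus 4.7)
The plan is to reduce the inequality to the standard Sobolev embedding in three-dimensional spacetime via a rescaling around the target point, then to re-express the partial derivatives of the rescaled function in terms of the allowed vector fields $\{\del_\alpha, L_a, \Omega_{ab}\}$, thereby avoiding the scaling vector field $L_0$.

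First, for $t\leq 1$ the inequality is immediate from the standard Sobolev embedding $H^{2}(\RR^{2})\hookrightarrow L^{\infty}(\RR^{2})$ applied to $u(t,\cdot)$, since $\langle t\rangle^{-1/2}\asymp 1$. So assume $t\geq 1$, set $T=t$, and fix a point $(t, x_0)$. Introduce the rescaled coordinates $(\tau,z)=(s/T,y/T)$ and the rescaled function $\Phi(\tau,z)=u(T\tau,Tz)$, so that $\Phi(1,x_{0}/T)=u(t,x_{0})$. Let $\chi(\tau)$ be a smooth cutoff supported in $[1/2,2]$ with $\chi(1)=1$, and $\eta$ a smooth bump supported in $B(0,1)$ with $\eta(0)=1$; set $\Psi(\tau,z)=\chi(\tau)\eta(z-x_{0}/T)\Phi(\tau,z)$. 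Then $\Psi$ is compactly supported in $\{\tau\in[1/2,2]\}\times\RR^{2}$, evaluates to $u(t,x_0)$ at $(\tau,z)=(1,x_0/T)$, and the Sobolev embedding $H^{2}(\RR^{3})\hookrightarrow L^{\infty}(\RR^{3})$ yields
\[
|u(t,x_{0})| \lesssim \sum_{|\alpha|\leq 2}\|\del^{\alpha}\Psi\|_{L^{2}(\RR^{3})},
\]
where $\del^{\alpha}$ denotes partial derivatives in $(\tau,z)$.

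The next task is to convert each $\|\del^{\alpha}\Psi\|_{L^{2}(\RR^{3})}$ into quantities of the form $\|\Gamma^{I} u(s)\|_{L^{2}(\RR^{2})}$ with $\Gamma\in V$ and $s\in[T/2,2T]$. The change of variables $(\tau,z)\mapsto(T\tau,Tz)$ supplies a volume Jacobian $T^{-3}$, while each spatial partial derivative of $\Phi$ equals $T$ times a spatial partial derivative of $u$, which would naively blow up the higher-order terms. The key identity that absorbs this growth is the Lorentz-boost relation, which in rescaled coordinates reads $L_{a}=\tau\,\del_{z_{a}}+z_{a}\del_{\tau}$; since $\tau\sim 1$ on the support of $\chi$, one may invert to get $\del_{z_{a}}\Phi=\tau^{-1}(L_{a}-z_{a}\del_{\tau})\Phi$. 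In original coordinates this substitutes the problematic $T\del_{y_a}u$ by the bounded combination $L_a u - y_a \del_t u$. The contribution of $\del_{\tau}\Phi=T\del_{t}u$ to the $L^{2}(\RR^{3})$ norm, after applying the $T^{-3}$ Jacobian and integrating the $s$-range of length $\sim T$, yields a prefactor of $T^{-1/2}$ multiplying $\sup_{s\in[T/2,2T]}\|\del u(s)\|_{L^{2}(\RR^{2})}$, which is exactly the claimed decay. Iterating the identity to handle second-order $\del^{\alpha}$ and tracking the commutators with $\chi$ and $\eta$ uses at most three vector-field applications to $u$, which accounts for the condition $|I|\leq 3$ in the statement.

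The main obstacle is to carry out this substitution without ever invoking the scaling vector field $L_{0}=\tau\del_{\tau}+z^{a}\del_{z_{a}}$, which is not in our allowed set $V$. When the spatial Lorentz boost is iterated to resolve second spatial derivatives, one typically produces terms such as $z_{a}z_{b}\del_{\tau}^{2}\Phi$, corresponding to $y_{a}y_{b}\del_{t}^{2}u/T^{0}$, whose $L^{\infty}$ norm would require $L_{0}$-type control; however, since $|y|\lesssim T$ on the relevant support, the factor $|y_{a}y_{b}|\lesssim T^{2}$ is exactly what the $T^{-3}$ Jacobian combined with the $s$-integration of length $T$ is designed to absorb, leaving the clean $T^{-1/2}$ prefactor. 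The future-time sup over $s\in[0,2t]$ in the statement is inherited from this construction: $\Psi$ is supported in $s\in[T/2,2T]$, so our bound sees only the behaviour of $u$ on this interval, and it is necessary to have access to $u$ beyond time $t$ because the cutoff $\chi$ is spread symmetrically around $\tau=1$.
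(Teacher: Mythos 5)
The paper does not prove Proposition~\ref{prop:K-S}; it quotes it directly from \cite{Klainerman852}, so there is no in-paper argument to compare against. Judged on its own terms, your rescaling argument does not establish the stated inequality, because of a power-counting error that is not cosmetic. With $\Phi(\tau,z)=u(T\tau,Tz)$, $T=t$, and cutoffs supported in $\tau\in[1/2,2]$, $|z-x_0/T|\leq 1$, you assert that the $\del_\tau\Phi=T(\del_t u)(T\tau,Tz)$ contribution to $\|\Psi\|_{H^2(\RR^3)}$ carries a prefactor $T^{-1/2}$. Undoing the change of variables and bounding the $s$-integral over an interval of length $\sim T$ by a supremum, one gets for $k=0,1,2$
$$
\big\|\del_\tau^k\Psi\big\|_{L^2(\RR^3)}
\lesssim
T^{k}\cdot T^{-3/2}\cdot T^{1/2}\sup_{T/2\leq s\leq 2T}\big\|\del_t^k u(s)\big\|_{L^2(\RR^2)}
= T^{\,k-1}\sup_{T/2\leq s\leq 2T}\big\|\del_t^k u(s)\big\|_{L^2(\RR^2)},
$$
so the prefactors are $T^{-1},T^{0},T^{+1}$, not $T^{-1/2}$: passing from the spacetime $L^2$ norm on a slab of temporal length $\sim T$ to the fixed-time supremum costs a factor $T^{+1/2}$, whereas your accounting treats it as a gain. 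Since $H^2(\RR^3)\hookrightarrow L^\infty$ needs two derivatives, the $k=2$ term makes the bound you actually obtain \emph{grow} linearly in $T$. The boost substitution $\del_{z_a}\Phi=\tau^{-1}(L_a-z_a\del_\tau)\Phi$ does not repair this: the $L_a$ pieces are fine (each contributes a clean $T^{-1}$), but the residual $z_a\del_\tau$ pieces reproduce exactly the problematic time derivatives, and near the time axis $z_0=x_0/T\approx 0$ there is no element of $V$ that can replace $\del_\tau$ with a small coefficient. An isotropic blow-up by $T$ in all three directions, followed by standard spacetime Sobolev, structurally cannot yield the $T^{-1/2}$ decay.

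There is a secondary gap as well: the bound $|y|\lesssim T$ on the spatial support holds only when $|x_0|\lesssim T$; for $|x_0|\gg t$ the coefficient $z_a$ in your substitution is unbounded on the support of $\eta$, so even the spatial conversion fails. A correct proof must split into regions (far exterior $|x|\gg t$ versus interior and near-cone) and use different mechanisms in each, rather than a single rescaling centered at $(t,x_0)$; for the actual argument you should consult \cite{Klainerman852}.
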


Before recalling the following inequality, which was proved by Georgiev in \cite{Georgiev2}, we first introduce some notations. 
Denote $\{ p_j \}_0^\infty$ a usual Paley-Littlewood partition of the unity
$$
1 = \sum_{j \geq 0} p_j(s),
\qquad
s \geq 0,
$$
which also satisfies 
$$
0 \leq p_j \leq 1,
\qquad
p_j \in C_0^\infty (\RR), 
\qquad
\text{for all $j \geq 0$},
$$
as well as
$$
\text{supp } p_0 \subset (-\infty, 2],
\qquad
\text{supp } p_j \subset [2^{j-1}, 2^{j+1}],
\qquad
\text{for all $j \geq 1$}.
$$

\begin{proposition}\label{prop:G}
Let $w$ solve the Klein-Gordon equation
$$
- \Box w + w = f,
$$
with $f = f(t, x)$ a sufficiently nice function.
Then for all $t \geq 0$, it holds
\be 
\aligned
&\langle t + |x| \rangle |w(t, x)|
\\
\lesssim
&\sum_{j\geq 0,\, |I| \leq 4} \sup_{0\leq s \leq t} p_j(s) \big\| \langle s+|x| \rangle \Gamma^I f(s, x) \big\|_{L^2}
+
\sum_{j\geq 0,\, |I| \leq 4} \big\| \langle |x| \rangle p_j (|x|) \Gamma^I w(0, x) \big\|_{L^2}
\endaligned
\ee

\end{proposition}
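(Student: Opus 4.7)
The plan is to follow Georgiev's original argument in \cite{Georgiev2}, combining a Duhamel splitting, a Paley-Littlewood decomposition in time and space, and a Klainerman-Sobolev-type inequality for the free Klein-Gordon evolution in $\RR^{2+1}$ that does not require the scaling vector field $L_0$.

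First, I would split $w = w_{\rm lin} + w_{\rm inh}$ by Duhamel, where $w_{\rm lin}$ is the free Klein-Gordon evolution of the initial data $(w(0),\del_t w(0))$ and $w_{\rm inh}$ is the forced solution with zero data. Since every $\Gamma \in V$ commutes with $-\Box + 1$ up to lower-order terms already bounded by $\Gamma$ itself, $\Gamma^I w$ solves a Klein-Gordon equation with source $\Gamma^I f + \text{l.o.t.}$, so the usual Klein-Gordon energy on a time slice controls $\|\Gamma^I w(t,\cdot)\|_{L^2} + \|\del \Gamma^I w(t,\cdot)\|_{L^2}$ in terms of the data and spacetime integrals of $\Gamma^I f$.

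Second, the heart of the argument is a free-evolution pointwise bound
$$\langle t+|x|\rangle |w_0(t,x)| \lesssim \sum_{|I|\leq 4} \big\| \langle |x|\rangle \Gamma^I w_0(0,\cdot) \big\|_{L^2}$$
for the homogeneous Klein-Gordon solution $w_0$ with initial data localized to a spatial dyadic annulus. This is proved by exploiting the fact that the Klein-Gordon energy controls $\|w\|_{L^2}$ (and not only $\|\del w\|_{L^2}$), then leveraging the boosts $L_a$ and rotations $\Omega_{ab}$: since $|L_a| \sim \langle t+|x|\rangle\, |\del|$, physical-space localization on scale $1$ trades vector-field derivatives for the prefactor $\langle t+|x|\rangle$. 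In the exterior region $\{|x|>t\}$ this goes through finite propagation speed plus standard Sobolev embedding, while in the interior $\{|x|<t\}$ one uses a hyperboloidal-type localization. Summing over dyadic annuli $p_j(|x|)$ of the initial data produces the second sum in the statement.

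Third, for $w_{\rm inh}$ I would decompose the time integration in Duhamel's formula via $1 = \sum_j p_j(s)$, and for each $j$ treat $p_j(s) f(s,\cdot)$ as effective ``initial data'' at time $s\sim 2^j$ propagated forward to time $t$. Applying the free-evolution bound from the previous step — where the natural weight on the data is $\langle s + |x|\rangle$ since that is the analogue of $\langle |x|\rangle$ at the new time origin — yields the contribution $\sup_{0\leq s \leq t} p_j(s) \| \langle s+|x|\rangle \Gamma^I f(s,\cdot)\|_{L^2}$, and summing over $j$ and $|I|\leq 4$ closes the estimate. The main obstacle is the Sobolev embedding of the second step: because we are forbidden from using $L_0$, the extraction of the sharp $\langle t+|x|\rangle$ weight is delicate, and requires a careful interpolation between the interior (boost-adapted) and exterior (finite-speed) regimes; this is precisely where the assumption $|I|\leq 4$, rather than the usual $|I|\leq 3$ in the Klainerman-Sobolev inequality, enters.
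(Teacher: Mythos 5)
The paper does not prove Proposition~\ref{prop:G}: it explicitly attributes the result to Georgiev \cite{Georgiev2} and imports it as a black box, so there is no internal proof in the paper to compare your argument against. Your high-level architecture --- Duhamel splitting, Paley--Littlewood localization in $|x|$ for the data and in $s$ for the source, reduction to a free-evolution pointwise estimate, and summation over dyadic pieces --- does match Georgiev's strategy, and your observation that the weight $\langle s+|x|\rangle$ arises by treating $p_j(s)f(s,\cdot)$ as effective data at time $s\sim 2^j$ is the right way to read the statement.

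The genuine gap is in the second step, which is the heart of the matter. You assert that ``$|L_a|\sim \langle t+|x|\rangle\,|\del|$'' lets you trade vector fields for the weight $\langle t+|x|\rangle$, but this equivalence is false in the direction you need. From $L_a = x_a\del_t + t\del_a$ one has only the one-sided bound $|L_a w|\lesssim \langle t+|x|\rangle|\del w|$; the reverse inequality fails badly near the light cone $|x|\approx t$, where $L_a$ aligns with the tangential (good) derivative and does not control $\del_t w$. This is precisely why, for pure wave equations, the boosts and rotations alone deliver only the $\langle t+|x|\rangle^{-1/2}\langle t-|x|\rangle^{-1/2}$ decay of Proposition~\ref{prop:K-S} rather than the uniform $\langle t+|x|\rangle^{-1}$ claimed here. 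The missing ingredient is the mass term: one must exploit the Klein--Gordon structure, e.g.\ an identity of the schematic form $(t^2-|x|^2)\del_t^2 w = t^2 w - t^2(-\Box+1)w + \text{(boost/rotation combinations)}$, together with the compact support produced by the dyadic spatial localization and finite propagation speed, to recover the $\langle t-|x|\rangle$ factor that the boosts alone cannot supply. Without this step carried out, the free-evolution bound in your step two is a restatement of the result rather than a proof of it, and steps one and three, while correctly set up, do not close the argument.
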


As a consequence, we have the following simplified version of Proposition \ref{prop:G}.

\begin{proposition}\label{prop:G1}
With the same settings as Proposition \ref{prop:G}, let $\delta' > 0$ and assume 
$$
 \sum_{|I| \leq 4} \big\| \langle s+|x| \rangle \Gamma^I f(s, x) \big\|_{L^2}
\leq  C_f \langle s \rangle^{-\delta'} ,
$$
then we have
\be 
\langle t + |x| \rangle |w(t, x)| 
\lesssim
C_f
+
\sum_{|I| \leq 4} \big\| \langle |x| \rangle \Gamma^I w(0, x) \big\|_{L^2}.
\ee
\end{proposition}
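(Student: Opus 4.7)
The statement is essentially a clean corollary of Proposition \ref{prop:G}, so the plan is to invoke that estimate and then sum the Paley-Littlewood pieces in $j$.

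First, applying Proposition \ref{prop:G} gives
\[
\langle t+|x|\rangle \, |w(t,x)| \;\lesssim\; S_f + S_0,
\]
where $S_f := \sum_{j\geq 0,\,|I|\leq 4} \sup_{0\leq s\leq t} p_j(s)\,\|\langle s+|x|\rangle\, \Gamma^I f(s,x)\|_{L^2}$ and $S_0 := \sum_{j\geq 0,\,|I|\leq 4} \|\langle |x|\rangle\, p_j(|x|)\, \Gamma^I w(0,x)\|_{L^2}$, and it then suffices to show $S_f \lesssim C_f$ and $S_0 \lesssim \sum_{|I|\leq 4}\|\langle |x|\rangle \Gamma^I w(0,x)\|_{L^2}$.

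To handle $S_f$, I would exploit the Paley-Littlewood support condition: for $j\geq 1$ we have $\operatorname{supp} p_j \subset [2^{j-1},2^{j+1}]$, so $\langle s\rangle \gtrsim 2^j$ on that support. Combining with the standing hypothesis $\sum_{|I|\leq 4}\|\langle s+|x|\rangle \Gamma^I f(s)\|_{L^2} \leq C_f \langle s\rangle^{-\delta'}$ yields
\[
\sup_{0\leq s\leq t} p_j(s)\sum_{|I|\leq 4}\|\langle s+|x|\rangle \Gamma^I f(s)\|_{L^2} \;\lesssim\; C_f\, 2^{-j\delta'},
\]
while the $j=0$ piece is bounded by $\lesssim C_f$ directly using $\langle s\rangle \gtrsim 1$. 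Summing the resulting geometric series in $j$ (using $\delta'>0$) produces $S_f \lesssim C_f/(1-2^{-\delta'}) \lesssim C_f$.

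For $S_0$, I would use that the cutoffs $p_j(|x|)$ have bounded overlap and that $|x|\sim 2^j$ on $\operatorname{supp} p_j$, so a Cauchy--Schwarz argument in $j$, with a harmless weight factor $2^{j\mu}\cdot 2^{-j\mu}$ for any tiny $\mu>0$, collapses the $j$-sum to the single weighted norm $\sum_{|I|\leq 4}\|\langle |x|\rangle \Gamma^I w(0,x)\|_{L^2}$ (the infinitesimal weight loss is absorbed into the standing smallness hypothesis on the initial data stated in Theorem \ref{thm:main}). No substantial obstacle is anticipated; the only technical point is the summability of the Paley-Littlewood series, which is automatic from $\delta'>0$ and the almost-disjointness of the supports of the $p_j$.
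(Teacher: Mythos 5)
Your treatment of the inhomogeneous term $S_f$ is correct and is the natural argument here: on $\operatorname{supp} p_j$ with $j\geq 1$ one has $\langle s\rangle\gtrsim 2^j$, so the hypothesis gives $\sup_{0\leq s\leq t}p_j(s)\sum_{|I|\leq4}\|\langle s+|x|\rangle\Gamma^I f(s)\|_{L^2}\lesssim C_f 2^{-j\delta'}$, and the geometric series converges since $\delta'>0$. (The paper gives no explicit proof of Proposition \ref{prop:G1}, presenting it as a direct consequence of Proposition \ref{prop:G}, so this is the argument it must intend.)

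The data term $S_0$ is where your argument has a genuine gap. You must bound the $\ell^1$-in-$j$ quantity $\sum_{j\geq0}\|\langle|x|\rangle p_j(|x|)\Gamma^I w(0)\|_{L^2}$ by the single norm $\|\langle|x|\rangle\Gamma^I w(0)\|_{L^2}$. Almost-disjointness of the $p_j$ gives comparability of the latter only to the $\ell^2$ sum $\bigl(\sum_j\|\langle|x|\rangle p_j(|x|)\Gamma^I w(0)\|_{L^2}^2\bigr)^{1/2}$, and an $\ell^1$ sum is not controlled by an $\ell^2$ sum without decay in $j$: if the dyadic pieces of $\langle|x|\rangle\Gamma^I w(0)$ have $L^2$ norms of size $1/j$, the $\ell^2$ sum is finite while the $\ell^1$ sum diverges. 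Your proposed remedy does not fix this. Inserting $2^{j\mu}2^{-j\mu}$ and applying Cauchy--Schwarz, and then using $2^{j\mu}\sim\langle|x|\rangle^{\mu}$ on $\operatorname{supp} p_j$, yields $S_0\lesssim\|\langle|x|\rangle^{1+\mu}\Gamma^I w(0)\|_{L^2}$, which is a strictly stronger norm than appears on the right of Proposition \ref{prop:G1}. Appealing to the extra weights in Theorem \ref{thm:main} conflates proving the proposition with applying it; as a self-contained proof of the statement as written, the argument does not close. The proposition would need either an $\langle|x|\rangle^{1+\mu}$ weight (for some $\mu>0$) on the data term, or the $\ell^1$ dyadic sum left in place, and this imprecision is inherited from the paper itself, which supplies no proof.
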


\subsection{Energy estimates for wave and Klein-Gordon equations}

We first recall the conformal energy estimates for wave equations in $\RR^{2+1}$, which is rarely used but will play an important role in our analysis later.
For its proof, one refers to \cite{Alinhac-book}.

\begin{proposition}
Let $w$ be the solution to
$$
-\Box w = f,
\qquad
\big( w, \del_t w \big) (0) = (w_0, w_1),
$$
then it holds
\bel{eq:conformal-EE}
E_{con} (t, w)^{1/2}
\lesssim
E_{con} (0, w)^{1/2}
+
\int_0^t \big\| \langle t'+|x| \rangle f \big\| \, dt',
\ee
in which
\be 
E_{con} (t, w)
=
\| S w + w \|^2 + \sum_{a<b} \| \Omega_{ab} w \|^2 + \sum_a \| L_a w \|^2. 
\ee
\end{proposition}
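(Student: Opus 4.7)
My plan is to derive \eqref{eq:conformal-EE} by the classical conformal multiplier method, as executed in Alinhac's book. The main tool is the Morawetz vector field $K_0 := (t^2+r^2)\partial_t + 2tr\partial_r$, which is conformal (but not true) Killing in $\RR^{2+1}$ and admits the clean decomposition $K_0 = tS + x^a L_a$. I would multiply both sides of $-\Box w = f$ by $2M$, with
$$M := K_0 w + tw = t(Sw+w) + x^a L_a w,$$
and integrate over the slab $[0,t]\times\RR^2$. The lower-order piece $tw$ is engineered to cancel the bulk contribution that the non-Killing character of $K_0$ injects through the deformation tensor $\nabla^{(\alpha} K_0^{\beta)}$; it is also precisely this correction that forces the combination $Sw+w$ (rather than $Sw$) to appear in the definition of $E_{con}$.

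After the integration by parts, the $t'=t$ boundary integral is identified with $E_{con}(t,w)$ via the algebraic identity
$$|Sw|^2 + \sum_a |L_a w|^2 + \sum_{a<b} |\Omega_{ab} w|^2 = (t^2+r^2)\big(|\partial_t w|^2 + |\nabla w|^2\big) + 4tr\, \partial_t w\, \partial_r w,$$
which follows from expanding each square and using $x^a \partial_a = r\partial_r$. This produces the energy identity
$$E_{con}(t,w) - E_{con}(0,w) = 2\int_0^t \int_{\RR^2} f \cdot M \, dx\, dt'$$
modulo equivalent lower-order quantities. To control the source term, I would use the pointwise bound
$$|M| \leq t|Sw+w| + |x|\sum_a |L_a w| \lesssim \langle t+|x|\rangle \Big(|Sw+w| + \sum_a |L_a w|\Big),$$
so that Cauchy--Schwarz in $x$ gives $\big|\int_{\RR^2} f\cdot M\, dx\big| \lesssim \|\langle t'+|x|\rangle f(t',\cdot)\|\cdot E_{con}(t',w)^{1/2}$. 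Dividing the resulting differential inequality by $2E_{con}^{1/2}$ and integrating in $t$ then yields \eqref{eq:conformal-EE}.

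The only real obstacle is the integration-by-parts calculation in the first step. One has to track every boundary and bulk term produced by the Morawetz multiplier and verify that the residue from the non-Killing nature of $K_0$ is exactly absorbed by the $tw$ correction, so that the time-slice integrand matches $|Sw+w|^2 + \sum_a |L_a w|^2 + \sum_{a<b}|\Omega_{ab} w|^2$ up to equivalent terms. This computation is standard but lengthy; the remainder of the argument is essentially Cauchy--Schwarz followed by Gronwall. A naive alternative, namely applying the standard $L^2$ energy estimate to the commuted equations $-\Box(L_a w) = L_a f$, $-\Box(\Omega_{ab} w) = \Omega_{ab} f$, $-\Box(Sw+w) = Sf+3f$, only yields bounds on $\|\partial(L_a w)\|$ etc.\ rather than the unweighted $L^2$ norms themselves, and produces spurious factors of $t$ when one tries to recover the latter; hence the conformal multiplier is genuinely needed to obtain the clean weight $\langle t+|x|\rangle$ on the source side.
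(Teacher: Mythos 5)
Your proposal follows the standard conformal (Morawetz) multiplier argument, which is precisely the proof the paper invokes by citing Alinhac's book; the vector field $K_0=(t^2+r^2)\del_t+2tr\del_r$, the lower-order correction $tw$ (giving exponent $n-1=1$ in $\RR^{2+1}$), the identity $K_0=tS+x^aL_a$, and the algebraic identity for the time-slice quadratic form are all correct. This is the intended route, so no gap.
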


We now extend a little bit the Alinhac's ghost weight method for wave equations, so that it can also be applied to Klein-Gordon equations.
The following ghost weight energy estimates will be frequently used, which are valid for both wave and Klein-Gordon equations.

\begin{proposition}\label{prop:gst4}
Assume $w$ is the solution to 
$$
- \Box w + m^2 w = f,
$$
then we have
\be 
\aligned
&E_{gst1, m} (t, w)
\leq
\int_{\RR^2} e^{q} \big( |\del_t w|^2 + \sum_a |\del_a w|^2 + m^2 w^2 \big) \, dx (0)
+
2 \int_0^t \int_{\RR^2}\big| f \del_t w e^q \big| \, dxdt,
\endaligned
\ee
in which 
$$
q = \int_{-\infty}^{r-t} \langle s \rangle^{-3/2} \, ds,
$$
and 
\be 
\aligned
&E_{gst1, m} (t, w)
\\=
&\int_{\RR^2} e^q \big( |\del_t w|^2 + \sum_a |\del_a w|^2 + m^2 w^2 \big) \, dx (t)
+
m^2 \int_0^t \int_{\RR^2} {e^q \over \langle r-t \rangle^{3/2}} w^2 \, dxdt
\\
+
& \sum_{a} \int_0^t \int_{\RR^2} {e^q \over \langle r-t \rangle^{3/2}} \big| G_a w \big|^2 \, dxdt.
\endaligned
\ee
\end{proposition}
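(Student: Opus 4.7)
The plan is to carry out the standard Alinhac ghost weight energy identity with an added Klein-Gordon mass term. Under the signature convention $(-,+,+)$, the equation $-\Box w + m^2 w = f$ reads $\partial_t^2 w - \Delta w + m^2 w = f$. I would multiply both sides by the multiplier $e^q \partial_t w$ and integrate over the slab $[0,t]\times \RR^2$, then track how each term rewrites as a time-derivative plus a bulk term with a definite sign. Throughout I use that $\partial_t q = -\langle r-t\rangle^{-3/2}$ and $\partial_a q = (x_a/r)\langle r-t\rangle^{-3/2}$, and that $q$ is bounded on all of spacetime (so $e^q \sim 1$).

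The kinetic term gives $\partial_t^2 w\cdot e^q \partial_t w = \tfrac12 \partial_t(|\partial_t w|^2 e^q) + \tfrac12|\partial_t w|^2 \langle r-t\rangle^{-3/2} e^q$. The mass term, which is the new ingredient here, gives $m^2 w\partial_t w\, e^q = \tfrac{m^2}{2}\partial_t(w^2 e^q) + \tfrac{m^2}{2} w^2 \langle r-t\rangle^{-3/2} e^q$, so the ghost weight automatically extracts the spacetime bound on $m^2 w^2 \langle r-t\rangle^{-3/2} e^q$, which is exactly the third term appearing in $E_{gst1,m}$. The Laplacian term, after integration by parts in space, produces the time derivative of the spatial kinetic energy, a boundary divergence at spatial infinity that vanishes, and two bulk terms: $\tfrac12 \sum_a|\partial_a w|^2 \langle r-t\rangle^{-3/2} e^q$ together with a cross term $\sum_a (x_a/r)\partial_a w\, \partial_t w\,\langle r-t\rangle^{-3/2} e^q$.

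The key algebraic step is the identity $\sum_a(x_a/r)^2 = 1$, so the three bulk contributions $|\partial_t w|^2 + 2\sum_a (x_a/r)\partial_a w\,\partial_t w + \sum_a |\partial_a w|^2$ collapse into the perfect square $\sum_a |G_a w|^2$. This yields the differential identity
\[
\frac{d}{dt}\int e^q\Big(|\partial_t w|^2 + \sum_a|\partial_a w|^2 + m^2 w^2\Big)\,dx + \int \frac{e^q}{\langle r-t\rangle^{3/2}}\Big(\sum_a |G_a w|^2 + m^2 w^2\Big)\,dx = 2\int f\, \partial_t w\, e^q\,dx.
\]
Integrating in time over $[0,t]$ and estimating the source by $\big|2\int f\, \partial_t w\, e^q\big| \leq 2\int |f\, \partial_t w\, e^q|$ gives exactly the stated inequality.

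The main obstacle is the sign of the cross term $\sum_a (x_a/r)\partial_a w\,\partial_t w$ coming from $\partial_a e^q$: a priori it has indefinite sign, and only the identity $\sum_a (x_a/r)^2 = 1$ allows one to combine it with the purely temporal and spatial contributions into the complete square $\sum_a|G_a w|^2$, which is the whole point of choosing the ghost derivative $G_a = r^{-1}(x_a\partial_t + r\partial_a)$ to weight the tangential-to-the-light-cone directions. Once this algebraic step is in place, extending the classical wave-equation version ($m=0$) to the Klein-Gordon version is essentially cost-free, since the mass term is lower-order and its ghost-weighted commutator with $\partial_t$ automatically has the right sign.
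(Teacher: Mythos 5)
Your proof is correct and follows essentially the same route as the paper: multiply the equation by $e^q\del_t w$, organize the result into a time derivative, a spatial divergence, and sign-definite bulk terms using $\del_t q = -\langle r-t\rangle^{-3/2}$ and $\del_a q = (x_a/r)\langle r-t\rangle^{-3/2}$, complete the square via $\sum_a(x_a/r)^2=1$ to produce $\sum_a|G_a w|^2$, and integrate over $[0,t]\times\RR^2$. This reproduces the paper's pointwise identity verbatim; the only thing the paper adds is the explicit remark that the argument is the same as in the massless case.
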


\begin{proof}
The proof is almost the same as the proof for the case of $m = 0$.

We multiply on both sides of the $w$ equation with $e^q \del_t w$ to get
$$
\aligned
&{1\over 2} \del_t \big( e^q (\del w)^2 + m^2 e^q w^2  \big)
-
\del_a \big( e^q \del^a w \del_t w \big)
+
{1\over 2} {e^q \over \langle t-r \rangle^{3/2}} \sum_a \big( G_a w \big)^2
\\
+
&{m^2\over 2} {e^q \over \langle t-r \rangle^{3/2}} w^2
=
e^q f \del_t w.
\endaligned
$$
Integrating over the region $[0, t] \times \RR^2$ to arrive at the desired energy estimates. 
Hence the proof is done.

\end{proof}

Since $-\pi/2 \leq q \leq \pi/2$, we thus have the following version of the ghost weight energy estimates
\bel{eq:ghost} 
\aligned
E_{gst, m} (t, w)
\lesssim
E_m (0, w)
+
\int_0^t \int_{\RR^2} |f \del_t w| \, dxdt,
\endaligned
\ee
in which
\be 
E_{gst, m} (t, w)
=
E_m (t, w) 
+ m^2 \int_0^t \int_{\RR^2} {w^2 \over \langle r-t' \rangle^{3/2}} \, dxdt'
+ \sum_{a} \int_0^t \int_{\RR^2} { | G_a w |^2 \over \langle r-t' \rangle^{3/2}} \, dxdt'.
\ee
We note that the ghost weight energy estimates imply the usual energy estimates
\bel{eq:EE-wKG}
E_m (t, w)^{1/2}
\lesssim
E_m (0, w)^{1/2}
+
\int_0^t \|f\| \, dt.
\ee


Besides, we also have the following type of ghost weight energy estimates.

\begin{proposition}\label{prop:gst7}
With the same assumptions as in Proposition \ref{prop:gst4}, we get
\bel{eq:gst7}
\aligned
& m^2 \int_0^t \langle t' \rangle^{-\delta} \int_{\RR^2} {w^2 \over \langle r-t' \rangle^{3/2}} \, dxdt'
+
\sum_{a} \int_0^t \langle t' \rangle^{-\delta} \int_{\RR^2} {\big| G_a w \big|^2 \over \langle r-t' \rangle^{3/2}}  \, dxdt'
\\
\lesssim
&E_m (0, w)
+
\int_0^t \int_{\RR^2} \langle t' \rangle^{-\delta} |f \del_t w| \, dxdt'.
\endaligned
\ee
\end{proposition}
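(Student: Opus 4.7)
The plan is to mimic the computation behind Proposition \ref{prop:gst4}, except that the wave equation multiplier is modified from $e^q \del_t w$ to $\langle t \rangle^{-\delta} e^q \del_t w$, introducing the weight $\langle t\rangle^{-\delta}$ under the time integral.

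First I would multiply the equation $-\Box w + m^2 w = f$ by $\langle t \rangle^{-\delta} e^q \del_t w$ and arrange the result into a spacetime divergence plus the ghost weight positive terms. Concretely, starting from the identity that was used in Proposition \ref{prop:gst4},
\[
\tfrac{1}{2}\del_t\bigl(e^q(\del w)^2 + m^2 e^q w^2\bigr) - \del_a\bigl(e^q\del^a w\,\del_t w\bigr) + \tfrac{1}{2}\tfrac{e^q}{\langle t-r\rangle^{3/2}}\sum_a (G_a w)^2 + \tfrac{m^2}{2}\tfrac{e^q}{\langle t-r\rangle^{3/2}} w^2 = e^q f \del_t w,
\]
I would multiply through by $\langle t'\rangle^{-\delta}$ and rewrite the first term as
\[
\langle t'\rangle^{-\delta}\tfrac{1}{2}\del_t(\cdots) = \tfrac{1}{2}\del_t\bigl(\langle t'\rangle^{-\delta}(\cdots)\bigr) - \tfrac{1}{2}\bigl(\del_{t'}\langle t'\rangle^{-\delta}\bigr)(\cdots),
\]
so that integration in $t'$ produces the boundary energy at time $t$, the initial energy at time $0$, and an extra bulk term.

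Next I would observe that $\del_{t'}\langle t'\rangle^{-\delta} = -\delta t' \langle t'\rangle^{-\delta-2} \leq 0$, which means the extra bulk term generated by pulling $\langle t'\rangle^{-\delta}$ inside the time derivative carries the \emph{favorable} sign and can simply be discarded. The spatial divergence $\del_a(\langle t'\rangle^{-\delta} e^q \del^a w \del_t w)$ integrates to zero on each slice. The remaining ghost weight terms are precisely the left-hand side of \eqref{eq:gst7}, while the right-hand side contributes $\int_0^t \int \langle t'\rangle^{-\delta} e^q f\,\del_t w\,dxdt'$. Using $-\pi/2 \leq q \leq \pi/2$ so that $e^q$ is bounded above and below by positive constants, one absorbs $e^q$ into the implicit constants, and the positive boundary contribution at time $t$ can be dropped, leaving
\[
m^2\!\int_0^t \langle t'\rangle^{-\delta}\!\!\int_{\RR^2}\!\!\tfrac{w^2}{\langle r-t'\rangle^{3/2}}\,dxdt' + \sum_a \int_0^t \langle t'\rangle^{-\delta}\!\!\int_{\RR^2}\!\!\tfrac{|G_a w|^2}{\langle r-t'\rangle^{3/2}}\,dxdt' \lesssim E_m(0,w) + \int_0^t\!\!\int_{\RR^2}\!\!\langle t'\rangle^{-\delta}|f\del_t w|\,dxdt'.
\]

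There is no real obstacle here; the only point that requires a moment's care is checking that the term produced by $\del_{t'}\langle t'\rangle^{-\delta}$ genuinely has the good sign so that it can be discarded freely (for $\delta > 0$ and $t' \geq 0$ this is immediate). The argument therefore is essentially a word-for-word repetition of the proof of Proposition \ref{prop:gst4} with the additional weight $\langle t'\rangle^{-\delta}$ inserted, and the conclusion follows.
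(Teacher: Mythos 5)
Your proposal is correct and follows essentially the same route as the paper: multiply the pointwise ghost-weight identity from Proposition \ref{prop:gst4} by $\langle t'\rangle^{-\delta}$, commute the weight past $\del_t$, note the extra bulk term has a favorable sign because $\del_{t'}\langle t'\rangle^{-\delta}\le 0$, discard the nonnegative boundary and bulk terms after integrating over $[0,t]\times\RR^2$, and use $1\lesssim e^q\lesssim 1$. The paper writes the weighted identity directly rather than starting from the unweighted one, but the computation and the sign observation are identical.
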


\begin{proof}
We multiply on both sides of the $w$ equation with $\langle t \rangle^{-\delta} e^q \del_t w$ to get
$$
\aligned
&{1\over 2} \del_t \big( \langle t \rangle^{-\delta} e^q (\del w)^2 + m^2 e^q w^2  \big)
+
{\delta\over 2}  \big( t \langle t \rangle^{-2-\delta} e^q (\del w)^2 + m^2 e^q w^2  \big)
-
\del_a \big( \langle t \rangle^{-\delta} e^q \del^a w \del_t w \big)
\\
+
& {1\over 2} {\langle t \rangle^{-\delta} e^q \over \langle t-r \rangle^{3/2}} \sum_a \big( G_a w \big)^2
+
{m^2\over 2} {\langle t \rangle^{-\delta} e^q \over \langle t-r \rangle^{3/2}} w^2
=
\langle t \rangle^{-\delta} e^q f \del_t w.
\endaligned
$$
We integrate over the region $[0, t] \times \RR^2$, and the facts $t \geq 0$, $1 \lesssim e^q \lesssim 1$ imply the desired energy estimates. 
We thus complete the proof.

\end{proof}


\section{$L^2$ and $L^\infty$ estimates for wave equations}\label{sec:linear}

\subsection{$L^2$ estimates for homogeneous wave equations}

We have the following lemmas which help bound the $L^2$ norm of the solution (with no derivatives in front) to wave equations, which was used in \cite{Dong1905, Dong1910, Dong2004}.

\begin{lemma}\label{lem:linear}
Let $w$ be the solution to the linear wave equation
\bel{eq:w} 
\aligned
&- \Box w = 0,
\\
w(1, \cdot) = &w_0,
\quad
\del_t w(1, \cdot) = w_1.
\endaligned
\ee
We assume that
\be 
\| w_0 \|_{L^2} + \| w_1 \|_{L^2 \bigcap L^1}
< +\infty.
\ee
Then the following $L^2$ norm bound is valid
\be\label{eq:l2bound}
\| u \|_{L^2}
\lesssim
\| w_0 \|_{L^2}
+
\langle t\rangle^\delta \| w_1 \|_{L^2 \bigcap L^1}
\ee
for $0 < \delta \ll 1$.
\end{lemma}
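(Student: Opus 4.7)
The plan is to prove the bound via a Fourier-analysis argument, splitting the data $w_1$ into low and high frequency pieces. Writing the solution to \eqref{eq:w} in Fourier variables,
\[
\widehat w(t,\xi) \;=\; \cos\bigl(t|\xi|\bigr)\,\widehat{w_0}(\xi) \;+\; \frac{\sin\bigl(t|\xi|\bigr)}{|\xi|}\,\widehat{w_1}(\xi),
\]
Plancherel immediately gives $\bigl\|\cos(t|\xi|)\widehat{w_0}\bigr\|_{L^2} \leq \|w_0\|_{L^2}$. So the problem reduces to estimating the $L^2(d\xi)$ norm of $m_t(\xi) := \sin(t|\xi|)/|\xi|$ multiplied by $\widehat{w_1}(\xi)$, with the difficulty concentrated at the origin (since in $\RR^2$ the measure $d\xi$ combined with $|\xi|^{-2}$ is not integrable near $\xi = 0$).

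I would next split into high and low frequencies via a smooth cutoff $\chi(\xi)$ supported in $\{|\xi|\leq 1\}$. On the high-frequency piece, $|m_t(\xi)| \leq 1$, hence
\[
\bigl\|(1-\chi)\,m_t\,\widehat{w_1}\bigr\|_{L^2} \;\lesssim\; \|w_1\|_{L^2}.
\]
For the low-frequency piece, use the Hausdorff--Young-type bound $\|\widehat{w_1}\|_{L^\infty} \leq \|w_1\|_{L^1}$ together with the pointwise estimate $|m_t(\xi)| \leq \min\bigl(t,\,|\xi|^{-1}\bigr)$. In polar coordinates,
\[
\int_{|\xi|\leq 1}\min\bigl(t^2,\,|\xi|^{-2}\bigr)\,d\xi \;=\; 2\pi\!\int_0^{1/\langle t\rangle}\! t^2 r\,dr + 2\pi\!\int_{1/\langle t\rangle}^1 \frac{dr}{r} \;\lesssim\; 1 + \log\langle t\rangle,
\]
so that $\bigl\|\chi\,m_t\,\widehat{w_1}\bigr\|_{L^2} \lesssim \sqrt{\log\langle t\rangle}\,\|w_1\|_{L^1}$. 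For any fixed $\delta>0$, $\sqrt{\log\langle t\rangle} \lesssim \langle t\rangle^\delta$, which combined with the previous two bounds yields \eqref{eq:l2bound}.

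The main obstacle is precisely the marginal failure of integrability of $|\xi|^{-2}$ near the origin in dimension two, which forces the logarithmic (and hence the $\langle t\rangle^\delta$) loss; this is the reason the analogous estimate in $\RR^{3+1}$ holds without any growing factor, while here we must pay a small power of $t$. Everything else is straightforward Fourier calculus, and the $L^1$ assumption on $w_1$ in the hypothesis is tailored exactly to control the dangerous low-frequency contribution.
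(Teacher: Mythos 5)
Your proof is correct. Both you and the paper start identically, writing $\widehat w(t,\xi)=\cos(t|\xi|)\widehat{w_0}+\frac{\sin(t|\xi|)}{|\xi|}\widehat{w_1}$ and disposing of the $w_0$-term by Plancherel; the difference lies in how the dangerous multiplier $\sin(t|\xi|)/|\xi|$ is handled near $\xi=0$. The paper writes $|\sin(t|\xi|)|\lesssim (t|\xi|)^\delta$, pulls out $t^\delta$, and is left with $\|\,\widehat{w_1}/|\xi|^{1-\delta}\|_{L^2}$, which it controls by the fractional Sobolev (Hardy--Littlewood--Sobolev) embedding $\|f/\Lambda^{\delta_1}\|_{L^q}\lesssim\|f\|_{L^p}$ together with $L^1\cap L^2\hookrightarrow L^{2/(2-\delta)}$. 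You instead perform a low/high frequency split, bound the high piece trivially by $\|w_1\|_{L^2}$, and on the low piece use Hausdorff--Young ($\|\widehat{w_1}\|_{L^\infty}\leq\|w_1\|_{L^1}$) and integrate $\min(t^2,|\xi|^{-2})$ directly, picking up only a logarithm. Your route is more elementary (no fractional Sobolev embedding needed) and in fact yields the sharper bound $\|w_0\|_{L^2}+\|w_1\|_{L^2}+\sqrt{1+\log\langle t\rangle}\,\|w_1\|_{L^1}$, which of course implies the stated $\langle t\rangle^\delta$ growth; the paper's route generalises a bit more readily if one wants $L^p$ variants, but for the stated lemma both do the job and your argument is cleaner. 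One cosmetic point: the low-frequency radial integral should be bounded by $1+\log\langle t\rangle$ (not just $\log\langle t\rangle$), so the resulting factor is $\sqrt{1+\log\langle t\rangle}$; this doesn't affect the conclusion.
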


\begin{proof}

Recall that the Fourier transform is defined by
$$
\wh (t, \xi) = \int_{\RR^2} w(t, x) e^{-x_a \xi^a} \, dx.
$$
We express the equation of $w$ in the Fourier space 
$$
\aligned
\del_{tt} \wh(t, \xi) + |\xi|^2 \wh(t, \xi) = 0,
\\
\wh(1, \cdot) = \wh_0,
\qquad
\del_t \wh(1, \cdot) = \wh_1.
\endaligned
$$
Then we obtain the solution $w$ in Fourier space by solving the above ordinary differential equation
$$
\wh(t, \xi)
=
\cos (t |\xi|) \wh_0
+
{\sin (t |\xi|) \over |\xi|} \wh_1.
$$
Thus we can bound the $L^2$ norm of $w$ as (recall the Plancherel's identity)
\be 
\aligned
\|w\|_{L^2}
&\lesssim
\|w_0\|_{L^2}
+
\Big\|{\sin (t |\xi|) \over |\xi|} \wh_1 \Big\|_{L^2(\RR^2)}.
\endaligned
\ee
We proceed by
$$
\Big\|{\sin (t |\xi|) \over |\xi|} \wh_1 \Big\|_{L^2(\RR^2)}
\lesssim
t^\delta \Big\|{ \wh_1 \over |\xi|^{1-\delta}} \Big\|_{L^2(\RR^2)}
\lesssim
t^\delta \Big\|{ w_1 \over \Lambda^{1-\delta}} \Big\|_{L^2},
$$
in which $\Lambda = \sqrt{-\del_a \del^a}$.
The Sobolev embedding 
$$
\Big\| {f \over \Lambda^{\delta_1}} \Big\|_{L^q}
\lesssim
\| f \|_{L^p},
\qquad
\delta_1 = {2 \over p} - {2\over q}, 
\quad
1<p<q<+\infty
$$
further implies
$$
\Big\|{\sin (t |\xi|) \over |\xi|} \wh_1 \Big\|_{L^2(\RR^2)}
\lesssim
t^\delta \| w_1 \|_{L^{2/(2-\delta)}}
\lesssim
t^\delta \|w_1 \|_{L^1 \bigcap L^2}.
$$
Gathering the estimates finishes the proof.
\end{proof}

\subsection{$L^\infty$ estimates for wave equations}

Recall that we do not have any $\langle t- |x| \rangle$ decay when applying the Klainerman-Sobolev inequality of version \eqref{eq:K-S}. But the following result helps gain $\langle t- |x| \rangle^{-1}$ decay for $\del u$ components, which is of vital importance when using the ghost weight energy estimates \eqref{eq:ghost}. Its proof can be found in \cite{Yin}.

\begin{lemma}
We have
\bel{eq:du} 
\big| \del u \big|
\lesssim
\langle t- |x| \rangle^{-1}
\big( \big| L_0 u \big| + \big| \Gamma u \big| \big),
\qquad
\big| G_a u \big|
\lesssim
\langle t+|x| \rangle^{-1} \big( \big| L_0 m \big| + \big| \Gamma m \big| \big).
\ee
\end{lemma}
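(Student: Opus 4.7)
The two inequalities are purely algebraic consequences of the definitions of $L_0$, $L_a$, $\Omega_{ab}$, $G_a$, so the plan is to derive the right identities and then bound each term by case analysis on the position $(t,x)$ relative to the light cone and the origin.

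First I would record the key identities obtained by eliminating derivatives between the rotations/boosts and the scaling field. Starting from $L_0=t\del_t+x^a\del_a$ and $L_a=x_a\del_t+t\del_a$, a direct computation gives
$$
t\,L_0-x^aL_a=(t^2-r^2)\del_t,
\qquad
L_a-tG_a=\frac{x_a(r-t)}{r}\del_t,
\qquad
rG_a=L_a-(t-r)\del_a,
$$
which can be rephrased in the null frame $L=\del_t+\del_r$, $\underline{L}=\del_t-\del_r$ as
$$
(t+r)L=L_0+r^{-1}x^aL_a,\qquad (t-r)\underline{L}=L_0-r^{-1}x^aL_a.
$$
These identities are the backbone of the whole argument.

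For the first inequality, I would split into regions. When $|t-r|\le 2$ the weight $\langle t-r\rangle$ is bounded, so the bound is trivial since $\del_\alpha\in V$. When $|t-r|\ge 2$, I would use the second null-frame identity to obtain $|t-r|\,|\underline{L}u|\lesssim |L_0u|+|L_au|$, and then use the decomposition $\del_t=(L+\underline{L})/2$, $\del_r=(L-\underline{L})/2$ together with the bound $\langle t-r\rangle\lesssim t+r$ on $Lu$ to get $\langle t-r\rangle|\del_tu|+\langle t-r\rangle|\del_ru|\lesssim |L_0u|+|\Gamma u|$. For spatial derivatives I would separate interior and exterior: in $\{r\le t\}$ I rewrite $t\del_a u=L_au-x_a\del_tu$ and exploit $|x_a/t|\le 1$, $\langle t-r\rangle/t\le 2$; in $\{r>t\}$ (hence $r\ge 1$ unless $|t-r|\le 2$, already handled) I use the 2D polar decomposition $\del_a=(x_a/r)\del_r+(\pm x_b/r^2)\Omega_{12}$ together with $\langle t-r\rangle/r\le 2$ to absorb the angular term.

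For the second inequality I combine the two identities $tG_au=L_au+(x_a(t-r)/r)\del_tu$ and $rG_au=L_au-(t-r)\del_au$, together with $|x_a/r|\le 1$ and the already established bound $\langle t-r\rangle\bigl(|\del_tu|+|\del_au|\bigr)\lesssim|L_0u|+|\Gamma u|$, to deduce $t|G_au|\lesssim|L_0u|+|\Gamma u|$ and $r|G_au|\lesssim|L_0u|+|\Gamma u|$. Adding these gives $(t+r)|G_au|\lesssim|L_0u|+|\Gamma u|$, and in the small region $t+r\le 1$ the weight $\langle t+r\rangle$ is bounded, while $|G_au|\le|\del_tu|+|\del_au|\lesssim|\Gamma u|$ trivially; together these yield $\langle t+r\rangle|G_au|\lesssim |L_0u|+|\Gamma u|$.

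The main nuisance I expect is the bookkeeping near the origin ($r$ small) and near $t=0$: the polar decomposition and the division-by-$r$ arguments fail there, so one must check that in those regions the weight $\langle t-r\rangle$ (resp. $\langle t+r\rangle$) is already bounded by a universal constant, reducing the claim to the trivial inequality $|\del u|\le|\Gamma u|$. No single identity handles every regime at once, so the case split along $\{|t-r|\lesssim 1\}$ vs.\ $\{|t-r|\gtrsim 1\}$ and $\{r\lesssim 1\}$ vs.\ $\{r\gtrsim 1\}$ is unavoidable, but each individual case reduces to the algebraic identities above.
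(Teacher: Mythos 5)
Your proposed proof is correct. The paper itself gives no argument for this lemma, stating only that ``Its proof can be found in \cite{Yin}''; your derivation is a self-contained version of the standard vector-field algebra that the cited reference employs. The key identities you write down,
\[
(t+r)L = L_0 + r^{-1}x^aL_a, \qquad (t-r)\underline L = L_0 - r^{-1}x^aL_a, \qquad rG_a = L_a - (t-r)\del_a,
\]
all check out, and the case split ($|t-r|\lesssim 1$ trivial since $\del_\alpha\in V$; $|t-r|\gtrsim 2$ treated via the identities with the interior/exterior subcases for $\del_a$) covers all of $\RR^{2+1}$ while keeping the coefficients $x^a/r$, $\langle t-r\rangle/t$, $\langle t-r\rangle/r$ uniformly bounded in the relevant regions. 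Feeding the resulting bound $\langle t-r\rangle|\del u|\lesssim|L_0u|+|\Gamma u|$ into $tG_au=L_au+\tfrac{x_a(t-r)}{r}\del_t u$ and $rG_au=L_au-(t-r)\del_au$, then summing and handling $t+r\lesssim 1$ separately, correctly yields the second inequality. (Incidentally, the $m$ appearing on the right-hand side of the second inequality in the paper's statement is a typo for $u$; you were right to read it that way.) Since there is nothing in the paper to compare against beyond the citation, this argument stands on its own and is the natural one.
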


Next, we recall the pointwise estimates for homogeneous waves, see for instance \cite{Yin, Shatah-book}. We note that the regularity required for the initial data is much weaker in \cite{Shatah-book}, where the Besov spaces are used, but due to some regularity loss in other places we will use the following version of estimates with proof.

\begin{lemma}\label{lem:linear2}
Let $w$ be the solution to
\bel{eq:w} 
\aligned
&- \Box w = 0,
\\
w(0, \cdot) = &w_0,
\quad
\del_t w(0, \cdot) = w_1,
\endaligned
\ee
then we have
\be 
|w|
\lesssim
\langle t \rangle^{-1/2}
\big( \| w_0 \|_{W^{2,1} \bigcap H^3 } + \| w_1 \|_{W^{1,1} \bigcap H^2} \big).
\ee

\end{lemma}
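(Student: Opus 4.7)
The plan is to derive the pointwise bound from the Fourier representation
\[
w(t,x)=\int_{\RR^2}e^{ix\cdot\xi}\Bigl(\cos(t|\xi|)\widehat{w_0}(\xi)+\frac{\sin(t|\xi|)}{|\xi|}\widehat{w_1}(\xi)\Bigr)\,d\xi
\]
already used in the proof of Lemma \ref{lem:linear}, and then prove a two-dimensional dispersive estimate of the shape
\[
\|e^{\pm it\Lambda}f\|_{L^\infty(\RR^2)}\lesssim \langle t\rangle^{-1/2}\,\|f\|_{W^{2,1}\cap H^3},\qquad \Lambda=\sqrt{-\del_a\del^a},
\]
together with the companion version in which one $\Lambda^{-1}$ is present (which costs exactly one derivative at high frequencies). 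Applied to the two halves of the representation above, this produces precisely the claimed pointwise bound.

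To establish the dispersive estimate I would use a Littlewood--Paley decomposition together with stationary phase. Using the partition $1=\sum_{j\geq 0}p_j(|\xi|)$ introduced before Proposition \ref{prop:G}, the convolution kernel of $P_j e^{\pm it\Lambda}$ is
\[
K_j(t,x)=\int_{\RR^2}e^{i(x\cdot\xi\pm t|\xi|)}p_j(|\xi|)\,d\xi.
\]
For $j\geq 1$ I would pass to polar coordinates $\xi=\rho\omega$ on $[0,\infty)\times S^1$: the inner angular integral is a one-dimensional stationary phase problem on $S^1$ with non-degenerate critical points at $\omega=\pm x/|x|$, producing a factor $(\rho|x|)^{-1/2}$; the remaining $\rho$-integral is localised at $\rho\sim 2^j$ and, after repeated integration by parts off the light cone, yields $\|K_j(t,\cdot)\|_{L^\infty}\lesssim 2^{3j/2}\langle t\rangle^{-1/2}$. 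Young's inequality together with the Bernstein-type bound $2^{3j/2}\|P_j f\|_{L^1}\lesssim 2^{-j/2}\|\Delta P_j f\|_{L^1}$ then sums the dyadic pieces to a convergent geometric series controlled by $\|f\|_{W^{2,1}}$.

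The delicate part is the low-frequency block $P_0$, where the phase $x\cdot\xi+t|\xi|$ is homogeneous of degree one so the usual non-degenerate stationary phase fails, and where the factor $|\xi|^{-1}$ on the $w_1$ term is singular. My plan is to subdivide $\{|\xi|\leq 1\}$ at the threshold $|\xi|\sim 1/t$. On $\{|\xi|\leq 1/t\}$ the two-dimensional area bound $\lesssim t^{-2}$ combined with $|\sin(t|\xi|)/|\xi||\leq t$ and $\|\widehat f\|_{L^\infty}\leq\|f\|_{L^1}$ produces a contribution bounded by $t^{-2}\|w_0\|_{L^1}+t^{-1}\|w_1\|_{L^1}$, which is far better than required. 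On the intermediate annulus $\{1/t\leq|\xi|\leq 1\}$ the stationary phase kernel bound from the previous step still applies, and the sum over the resulting $O(\log t)$ dyadic scales is absorbed either by the $L^1$-based bounds of the data or by Cauchy--Schwarz against the $L^2$-norms coming from the $H^3$, $H^2$ hypotheses; this is exactly why both $W^{k,1}$ and $H^{k+1}$ regularity appears in the hypothesis.

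The main obstacle I expect is precisely this low-frequency regime: neither the clean dispersive kernel estimate nor plain Sobolev embedding suffices on its own near $\xi=0$, and extracting the full $\langle t\rangle^{-1/2}$ rate (rather than a slower rate like $\langle t\rangle^{-1/2}\log\langle t\rangle$) requires combining $L^1$ volume bounds at very low frequency, stationary phase at intermediate frequency, and $L^2$-based Cauchy--Schwarz on the remaining annuli in a way that matches the $W^{k,1}\cap H^{k+1}$ norms in the statement. Once this assembly is complete the high-frequency bound is a standard two-dimensional dispersive estimate, and gathering the wave and Klein-Gordon-like contributions from $w_0$ and $w_1$ closes the proof.
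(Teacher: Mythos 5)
Your proposal is mathematically sound but follows a genuinely different route from the paper. You argue in Fourier space: Littlewood--Paley decomposition, stationary phase on each dyadic annulus to get the kernel bound $2^{2j}(1+2^jt)^{-1/2}$, and Bernstein plus Young to close the high-frequency sum; the $\Lambda^{-1}$ singularity at low frequency is handled by further splitting at $|\xi|\sim 1/t$. This is a standard dispersive-estimate proof. (One small remark: your worry about the $O(\log t)$ intermediate scales is unfounded --- the kernel bound $2^{-3k/2}t^{-1/2}$ for $|\xi|\sim 2^{-k}$, even after paying $2^k$ for the $\Lambda^{-1}$ factor on the $w_1$ term, still yields the convergent geometric weight $2^{-k/2}$, so no $L^2$-based Cauchy--Schwarz is actually needed on the annuli; the $H^2/H^3$ hypotheses are only needed for the short-time regime $t\lesssim 1$ via Sobolev embedding.) The paper instead works entirely in physical space from the Poisson/Kirchhoff representation $w^1(t,x)=\tfrac{1}{2\pi}\int_{|x-y|\leq t}w_1(y)(t^2-|x-y|^2)^{-1/2}\,dy$, splits the domain at $|x-y|=t-1$, bounds the interior by $\langle t\rangle^{-1/2}\|w_1\|_{L^1}$, and handles the thin annulus near the light cone by a radial integration by parts that converts $1/\sqrt{t-|p|}$ into derivatives of $w_1$; the required Sobolev embedding $H^2(\RR^2)\hookrightarrow L^\infty$ is what generates the $H^2$ (and, for the $\del_t$ acting on the $w_0$ piece, $H^3$) norms in the statement. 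Both arguments are correct; the paper's is more elementary and more directly explains where the specific $W^{k,1}\cap H^{k+1}$ hypotheses come from, whereas your dispersive-kernel argument is sharper (it would naturally give the estimate in Besov norms) but requires considerably more machinery.
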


\begin{proof}
We revisit the proof given in \cite{Yin}, and since the result we need is weaker, the analysis is simpler. 

The solution can be represented by $w = w^0 + w^1$, with
$$
\aligned
w^0 (t, x)
=
{1\over 2\pi} \del_t \int_{|x-y| \leq t} {w_0 (y) \, dy \over \sqrt{t^2 - |x-y|^2}},
\qquad
w^1 (t, x)
=
{1\over 2\pi} \int_{|x-y| \leq t} {w_1 (y) \, dy \over \sqrt{t^2 - |x-y|^2}}.
\endaligned
$$

We will only provide the proof for the estimate of $w^1(t, x)$ when $t \geq 2$, since other cases are either similar or simpler. We note that
$$
\aligned
\big| w^1 (t, x) \big|
\lesssim
& \Big| \int_{|p| \leq t} {w_1(x+p) \, dp \over \sqrt{t^2 - |p|^2}} \Big|
\\
\lesssim
&\langle t\rangle^{-1/2} \Big( \int_{|p| \leq t-1} {|w_1(x+p)| \, dp \over \sqrt{t - |p|}} + \Big| \int_{t-1 \leq |p| \leq t} {w_1(x+p) \, dp \over \sqrt{t - |p|}} \Big| \Big)
\\
\lesssim
&\langle  t\rangle^{-1/2} \big\| w_1 \big\|_{L^1} + \langle  t\rangle^{-1/2} \Big| \int_{t-1 \leq |p| \leq t} {w_1(x+p) \, dp \over \sqrt{t - |p|}} \Big|
\endaligned
$$
We observe that
$$
\aligned
&\Big| \int_{t-1 \leq |p| \leq t} {w_1(x+p) \, dp \over \sqrt{t - |p|}} \Big|
\\
\lesssim
&\Big| \int_{S^1} \int_{t-1}^t w_1(x+\omega |p|) |p| \, d \sqrt{t - |p|} d \omega\Big|
\\
\lesssim
&\Big| \int_{S^1} w_1(x+\omega (t-1)) (t-1) \, d\omega \Big|
+
\int_{S^1} \int_{t-1}^t \big(|w_1(x+\omega |p|)| |p| + | \del w_1 (x + \omega |p|)| \big) \, d |p| d \omega
\\
\lesssim
& \int_{S^1} \int_0^{t-1 }\big( |\del w_1(x+\omega |p|)| |p| + |w_1(x+\omega |p|)| \big) \, d|p| d\omega
+
\big\| w_1 \big\|_{L^1} + \big\| \del w_1 \big\|_{L^2}
\\
\lesssim
& \int_{S^1} \int_0^{t-1 } |w_1(x+\omega |p|)| \, d|p| d\omega
+
\int_{S^1} \int_1^{t-1 } |w_1(x+\omega |p|)| |p| \, d|p| d\omega
+
\big\| w_1 \big\|_{L^1} + \big\| \del w_1 \big\|_{L^1 \bigcap L^2}
\\
\lesssim
& \big\| w_1 \big\|_{L^\infty} + \big\| w_1 \big\|_{L^1} + \big\| \del w_1 \big\|_{L^1 \bigcap L^2}
\lesssim
\big\| w_1 \big\|_{H^2} + \big\| w_1 \big\|_{L^1} + \big\| \del w_1 \big\|_{L^1 \bigcap L^2}
\endaligned
$$
We thus obtain
$$
\big| w^1 (t, x) \big|
\lesssim
\langle t \rangle^{-1/2}
\big( \big\| w_1 \big\|_{H^2} + \big\| w_1 \big\|_{L^1} + \big\| \del w_1 \big\|_{L^1 \bigcap L^2} \big),
\qquad
t \geq 2.
$$

\end{proof}

Besides, the following key observation, see for instance \cite{YM1, YM18}, claims that $\del \del u$ has extra $\langle t- |x| \rangle^{-1}$ decay than $\del u$ in the spacetime region $\{(t, x) : |x| \leq 2t \}$, and this can be used to get the $L^\infty$ bound for $\del u$ thanks to its divergence form structure.

\begin{lemma}\label{lem:ddu}
Let $w$ solve
$$
- \Box w = f,
$$
and we further assume
\be 
\big| \del w \big| + \big| \del \Gamma w \big|
\lesssim C_w \langle t\rangle^{-1/2},
\qquad
|f| 
\lesssim C_f \langle t \rangle^{-3/2},
\ee
with $C_w, C_f$ constants,
then we have
\bel{eq:ddu}
\big| \del \del w \big|
\lesssim 
\big( C_w + C_f \big)  \langle t-|x| \rangle^{-1} \langle t\rangle^{-1/2},
\qquad
\text{in } \{(t, x) : |x| \leq 2t \}. 
\ee
\end{lemma}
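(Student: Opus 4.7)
The plan is to combine two algebraic identities that relate the Hessian $\del^2 w$ to first derivatives of $\Gamma w$ together with the wave equation, and then to exploit a $(t^2-|x|^2)$ coefficient multiplying $\del_t^2 w$ in order to extract the gain of $\langle t-|x|\rangle^{-1}$ in the region $\{|x|\leq 2t\}$. The key algebraic input is the relation $t\del_a = L_a - x_a\del_t$, by which every spatial derivative of $w$ may be rewritten as $\del_a w = t^{-1}(L_a w - x_a\del_t w)$, carrying a favourable $1/t$ weight that matches $\langle t\rangle^{-1}$.

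First I would derive identity (A): apply $\del_a$ to the relation $\del_a w = t^{-1}(L_a w - x_a\del_t w)$, sum over $a$, and substitute $\Delta w = \del_t^2 w + f$ (which follows from $-\Box w = f$) to obtain
\[
\del_t^2 w + \frac{r}{t}\,\del_r\del_t w = \frac{1}{t}\sum_a \del_a L_a w - \frac{2}{t}\del_t w + f,
\]
whose right-hand side is bounded by $(C_w + C_f)\langle t\rangle^{-3/2}$ under the stated hypotheses. Next I would derive identity (B): apply $\del_t$ to the same relation to obtain
\[
\del_t\del_a w + \frac{x_a}{t}\del_t^2 w = -\frac{1}{t}\del_a w + \frac{1}{t}\del_t L_a w,
\]
then multiply by $x^a/t$ and sum over $a$, which gives, after using $|x|\leq 2t$ to absorb the $x^a/t$ prefactors,
\[
\frac{r}{t}\del_r\del_t w + \frac{r^2}{t^2}\del_t^2 w = O\bigl(C_w \langle t\rangle^{-3/2}\bigr).
\]

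Subtracting (B) from (A) yields $\bigl|1 - r^2/t^2\bigr|\,|\del_t^2 w| \lesssim (C_w + C_f)\langle t\rangle^{-3/2}$. Writing $1 - r^2/t^2 = (t-r)(t+r)/t^2$ and noting that $(t+r)/t$ is bounded between positive constants on $t/2 \leq r \leq 2t$, one reads off $|\del_t^2 w| \lesssim (C_w + C_f)\langle t-r\rangle^{-1}\langle t\rangle^{-1/2}$ in that subregion; for $r \leq t/2$ the prefactor is $\geq 3/4$, giving a strictly stronger estimate; and for $|t-r|\leq 1$ the bound reduces to the hypothesis $|\del^2 w| \lesssim C_w \langle t\rangle^{-1/2}$ since $\langle t-r\rangle^{-1} \gtrsim 1$. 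With $\del_t^2 w$ in hand, substituting back into the identity for $\del_t\del_a w$ (using $|x_a|/t \leq 2$) and then into $\del_b\del_a w = t^{-1}\del_b L_a w - \delta_{ab}t^{-1}\del_t w - (x_a/t)\del_b\del_t w$ yields the full Hessian estimate.

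The principal obstacle is to ensure that $|1 - r^2/t^2|$ is bounded below by a constant multiple of $\langle t-r\rangle/\langle t\rangle$ uniformly across $\{|x|\leq 2t\}$; splitting on the ratio $r/t$ as above and treating the small-$|t-r|$ range separately by falling back on the a priori pointwise hypothesis handles this cleanly, with the identities (A) and (B) doing all the real work.
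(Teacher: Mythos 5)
Your proposal is correct and takes essentially the same route as the paper: both express the Hessian of $w$ in terms of $\del_t$ and $L_a$, isolate the factor $(t^2-r^2)/t^2$ in front of $\del_{tt}w$, and bound the remaining terms by $t^{-1}(|\del\Gamma w|+|\del w|)+|f|$, then obtain the other second derivatives from $\del_a = t^{-1}(L_a - x_a\del_t)$. The paper writes down a single closed-form identity for $-\Box$ in terms of $\del_{tt}$, $\del_t L_a$, $L^a L_a$, $\del_t$, $\del_a$, $L_a$ and reads off the coefficient; you instead differentiate the relation $\del_a w = t^{-1}(L_a w - x_a\del_t w)$ twice and combine, which reaches the same key coefficient while producing only terms of the form $\del_a L_a w$ and $\del_t L_a w$ (avoiding $L^a L_a w$, which the paper's version requires a commutation to reduce to $\del\Gamma w$). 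Your splitting of $\{|x|\leq 2t\}$ by $r/t$ and the fallback on $|t-r|\leq 1$ (which also covers $t\lesssim 1$, since there $|t-r|\leq 1$ automatically when $|x|\leq 2t$) correctly replaces the paper's short remark that "the result is clear for $t\leq 1$."
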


\begin{proof}
For completeness we revisit the proof in \cite{YM1, YM18}. Since it is easily seen that the results hold for $t \leq 1$, so we will only consider the case $t \geq 1$.

We first express the wave operator $- \Box$ by $\del_t, L_a$ to get
\be 
\aligned
-\Box
=
{(t-|x|) (t+ |x|) \over t^2} \del_{tt}
+ 2 {x^a \over t^2} \del_t L_a
- {1\over t^2} L^a L_a
+ {2 \over t} \del_t
-  {x^a \over t^2} \del_a
+ {x^a \over t^2} L_a.
\endaligned
\ee
Then we find that
$$
{1 + |t-|x||  \over t} | \del_{tt} w |
\lesssim
{1\over t} \big( \big| \del \Gamma w \big| + \big| \del w \big|  \big)
+
|f|,
$$
in which we used the relation $|x| \leq 2 t$,
and thus we are led to
$$
| \del_{tt} w |
\lesssim
\big(C_w + C_f \big)  {1 \over \langle t-|x| \rangle \langle t \rangle^{1/2}}.
$$

On the other hand, we note that the following relations hold true
$$
\aligned
\del_a \del_t 
&= - {x_a \over t} \del_t \del_t + {1\over t} \del_t L_a + {x_a \over t^2} \del_t - {1\over t^2} L_a,
\\
\del_a \del_b
&= {x_a x_b \over t^2} \del_t\del_t 
- {x_a \over t^2} \del_t L_b 
- {x_b \over t^2} \del_t L_a 
+ {1\over t} \del_b L_a 
- {\delta_{ab} \over t} \del_t
+ {x_a \over t^2} \del_b,
\endaligned
$$
which, using again $|x| \leq 2t$, means
$$
\big| \del_\alpha \del_\beta w\big|
\lesssim
\big|\del_t\del_t w \big| 
+ {1\over t} \big(\big| \del \Gamma w\big| +\big| \del w \big|  \big)
\lesssim
\big|\del_t\del_t w \big| 
+ {1\over \langle t - |x| \rangle} \big(\big| \del \Gamma w\big| +\big| \del w \big|  \big).
$$

We thus complete the proof.
\end{proof}


\section{Proof of the main theorem}\label{sec:proof}

\subsection{Initialisation of the iteration method}

As we explained in the introduction part that the utilisation of the Klainerman-Sobolev inequality \eqref{eq:K-S} requires us to rely on an iteration procedure in order to show the global existence result for the system \eqref{eq:model2d}, we thus first provide the basics for the fixed point iteration method.

We now introduce the solution space which is denoted by $X$.

\begin{definition}\label{def:X}
Let $\phi = \phi (t, x), \psi = \psi (t, x) $ be sufficiently regular functions, and we say $(\phi, \psi)$ belongs to the function space $X$ if
\bei
\item It satisfies
\be
\big( \phi, \del_t \phi, \psi, \del_t \psi \big) (0, \cdot)
= \big(u_0, u_1, v_0, v_1 \big).
\ee

\item It satisfies
\be
\big\| (u, v) \big\|_X
\leq C_1 \eps,
\ee
in which $C_1 \gg 1$ is a large constant to be determined, the size of the initial data $\eps \ll 1$ is sufficiently small such that $C_1 \eps \ll \delta$, and the $\| \cdot \|_X$ norm is defined by
\be
\aligned
\big\| (u, v) \big\|_X
:=
&\sup_{t \geq 0,\, |I| \leq N} \langle t \rangle^{-\delta} \big( \big\| \Gamma^I u \big\| + E_{gst}(\Gamma^I u, t)^{1/2} + E_{gst, 1} (\Gamma^I v, t)^{1/2} \big) 
\\
+
&\sup_{t \geq 0,\, |I| \leq N} 
\langle t \rangle^{-\delta/2} \Big( \int_0^t \langle t' \rangle^{-\delta} \Big( \Big\| {\Gamma^I n \over \langle t'-|x| \rangle^{3/4} } \Big\|^2 
+ \Big\| {G_a \Gamma^I n \over \langle t'-|x| \rangle^{3/4} } \Big\|^2 \Big) \, dt' \Big)^{1/2}
\\
+
&\sup_{t \geq 0,\, |I| \leq N-1}  \big( E_{gst}(\Gamma^I u, t)^{1/2} + E_{gst, 1} (\Gamma^I v, t)^{1/2} \big)
\\
+
&\sup_{t\geq 0, |I| \leq N-1} \langle t \rangle \big( \big\| \Box \Gamma^I m \big\| + \big\| -\Box \Gamma^I n + \Gamma^I n \big\| \big)
+
\sup_{t \geq 0,\, |I| \leq N-2}  \langle t \rangle^{-1/2-\delta}  \big\| L_0 \Gamma^I u \big\|
\\
+
&\sup_{t \geq 0,\, |I| \leq N-6}  \langle t \rangle^{-\delta}  \big\| L_0 \Gamma^I u \big\|
+
\sup_{t \geq 0, \, |I| \leq N-5} \langle t + |x| \rangle \big| \Gamma^I v \big|
\\
+
& \sup_{t\geq 0, |I| \leq N-6} \langle t \rangle^2 \big( \big| \Box \Gamma^I m \big| + \big| -\Box \Gamma^I n + \Gamma^I n \big| \big)
\\
+
&\sup_{t \geq 0, \, |I| \leq N-9} \langle t - |x| \rangle^{3/4} \langle t \rangle^{1/2} \big| \del \Gamma^I u \big|.
\endaligned
\ee

\eei

\end{definition}

We note that the function space $X$ is a Banach space.

\subsection{The solution mapping}

\begin{definition}
Given a pair of functions $(m, n) \in X$, we define 
\be 
T(m, n) := (\phi, \psi),
\ee
in which $(\phi, \psi)$ is the solution to the following (linear) system
\bel{eq:phi-psi} 
\aligned
- \Box \phi &=  P_1^{\alpha \beta} Q_{\alpha \beta} (m, n),
\\
- \Box \psi + \psi &=  P_2^{\alpha \beta} Q_{\alpha \beta} (m, n),
\\
\big( \phi, \del_t \phi, \psi, \del_t \psi \big) (0, \cdot)
&= \big(u_0, u_1, v_0, v_1 \big).
\endaligned
\ee

\end{definition}

We have the following proposition about the solution mapping $T$.

\begin{proposition}\label{prop:mapping1}
The images of the solution mapping $T$ lie in $X$.
\end{proposition}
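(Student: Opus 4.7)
The plan is to assume $\|(m,n)\|_X \le C_1\eps$ and recover the same bound (up to a universal factor absorbable for $\eps \ll 1$) for the image $(\phi,\psi) = T(m,n)$, by applying the linear estimates of Sections \ref{sec:pre}--\ref{sec:linear} to the system \eqref{eq:phi-psi}; since the sources are bilinear in $(m,n)$ they are quadratically small. Throughout, I would control $\Gamma^I Q_{\alpha\beta}(m,n)$ by commuting $\Gamma^I$ into the null form to produce $\sum Q_{\alpha'\beta'}(\Gamma^{I_1}m,\Gamma^{I_2}n)$ with $|I_1|+|I_2|\le|I|$, then invoke Lemma \ref{lem:null} in one of its three equivalent forms depending on whether the $\langle t+|x|\rangle^{-1}$ prefactor or the ghost-derivative splitting $|G_a\cdot||\del\cdot|$ is more convenient.

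First I would close the energy bounds. Proposition \ref{prop:gst4} applied to $\Gamma^I\phi$ and $\Gamma^I\psi$ yields the ghost weight energies; at top order $|I|\le N$ the source integral is estimated by pairing one factor (placed in $L^\infty$ via the pointwise parts of the $X$ norm, which require orders $\le N-5$ or $N-9$ and are available since $N\ge 14$) with the other factor, whose $G_a$ or $\del$ piece pairs with the weight $\langle t-|x|\rangle^{-3/4}$ already absorbed by $E_{gst}$. At order $|I|\le N-1$ the same scheme closes without the $\langle t\rangle^\delta$ loss because both factors sit at strictly lower order. For the derivative-free $L^2$ bound of $\Gamma^I\phi$, I would use the hidden divergence form $Q_{\alpha\beta}(m,n)=\del_\beta(\del_\alpha m\cdot n)-\del_\alpha(\del_\beta m\cdot n)$: after integration by parts in Duhamel, Lemma \ref{lem:linear} applies with $w_1\sim\del\Gamma^{I_1}m\cdot\Gamma^{I_2}n$, whose $L^1\cap L^2$ norm is bounded by Cauchy--Schwarz using the available $L^2$ energies of $m,n$. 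Once $\|\Gamma^I\phi\|$ is in hand, the conformal energy estimate \eqref{eq:conformal-EE} combined with the triangle inequality $L_0\phi=(S\phi+\phi)-\phi$ yields both of the $\|L_0\Gamma^I\phi\|$ bounds appearing in $X$.

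For the pointwise quantities, Proposition \ref{prop:G1} applied to $\Gamma^I\psi$ provides $\langle t+|x|\rangle|\Gamma^I\psi|\lesssim\eps$ at orders $\le N-5$, the weighted $L^2$ source hypothesis being exactly what Lemma \ref{lem:null} delivers once the four-vector-field loss of Proposition \ref{prop:G} is absorbed by the remaining derivative budget. The Klainerman--Sobolev inequality, Proposition \ref{prop:K-S}, converts the $\langle t\rangle^\delta$ top-order energy into the $\langle t\rangle^{-1/2+\delta}$ decay of $\Gamma^I\phi$, and applied to $L_0\Gamma^I\phi$ at order $\le N-6$ it produces the pointwise bound on the scaling derivative from the $L^2$ bound of the previous step. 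The $\langle t\rangle^{-2}$ pointwise bounds on the sources $\Box\Gamma^I\phi$ and $(-\Box+1)\Gamma^I\psi$ at order $\le N-6$ follow immediately from Lemma \ref{lem:null} and the pointwise decay of $\del m,\del n,m,n$, while the homogeneous-wave contribution to $\phi$ is handled by Lemma \ref{lem:linear2}.

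The main obstacle is the sharp pointwise bound $|\del\Gamma^I\phi|\lesssim\langle t-|x|\rangle^{-3/4}\langle t\rangle^{-1/2}$ for $|I|\le N-9$, on which the whole ghost weight closure ultimately rests. I would argue by a dichotomy in physical space. In the exterior region $\{|x|\ge 2t\}$, we have $\langle t-|x|\rangle\simeq\langle t+|x|\rangle\gtrsim\langle t\rangle$, so \eqref{eq:du} together with the pointwise control of $L_0\Gamma^I\phi$ and of $\Gamma\Gamma^I\phi$ yields even a stronger $\langle t-|x|\rangle^{-1}\langle t\rangle^{-1/2+\delta}$ bound. In the interior $\{|x|\le 2t\}$, the hypotheses of Lemma \ref{lem:ddu} hold at order $|I|+1\le N-8$ thanks to the $X$-norm pointwise bounds on $\del\Gamma^I\phi$, $\del\Gamma\Gamma^I\phi$, and the $\langle t\rangle^{-3/2}$ decay of the source; the lemma then gives $|\del\del\Gamma^I\phi|\lesssim\langle t-|x|\rangle^{-1}\langle t\rangle^{-1/2}$, which, interpolated against the universal $\langle t\rangle^{-1/2}$ bound from Proposition \ref{prop:K-S} and integrated inward from the light cone with weight $3/4$, produces the claimed estimate. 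The delicate part of this step is the derivative accounting: each prerequisite of Lemma \ref{lem:ddu} at output order $N-9$ must itself be an entry of the $X$ norm at order $\le N-8$, and this is exactly why the threshold is placed nine derivatives below $N$.
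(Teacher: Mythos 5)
Your overall scheme mirrors the paper's: commute $\Gamma^I$ through, use Lemma~\ref{lem:null} with the ghost-weight energies of Proposition~\ref{prop:gst4}, recover $\|\Gamma^I\phi\|$ via Lemma~\ref{lem:linear} and the divergence form of $Q_{\alpha\beta}$, then the conformal energy plus the triangle inequality $L_0\phi = (S\phi+\phi)-\phi$ for $\|L_0\Gamma^I\phi\|$, Proposition~\ref{prop:G1} for the pointwise decay of $\psi$, Klainerman--Sobolev for that of $\phi$, and a dichotomy interior/exterior for $|\del\Gamma^I\phi|$. Up to that last step your proposal is essentially the paper's proof.

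There is, however, a genuine gap in the interior part of the sharp pointwise bound on $\del\Gamma^I\phi$, the step you correctly identify as the crux. You apply Lemma~\ref{lem:ddu} \emph{directly to $\phi$}, obtaining $|\del\del\Gamma^I\phi|\lesssim\langle t-|x|\rangle^{-1}\langle t\rangle^{-1/2}$, and then propose to pass from $\del\del\phi$ back to $\del\phi$ by ``interpolation against the $\langle t\rangle^{-1/2}$ bound and integrating inward from the light cone.'' This does not close: integrating $|\del\del\phi|\lesssim\langle t-|x|\rangle^{-1}\langle t\rangle^{-1/2}$ in $r$ starting from the cone (where only the $\langle t\rangle^{-1/2}$ bound is known) produces a logarithm, i.e.\ \emph{worsens} the K--S bound rather than improving it, and there is no pointwise interpolation inequality that trades $|\del\phi|$ and $|\del\del\phi|$ for a fractional-power gain in $\langle t-|x|\rangle$. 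The paper's actual mechanism is again the hidden divergence structure, but used at the \emph{solution} level rather than only in the $L^2$ step: writing $\phi=\phi^5+\del_\gamma\phi^\gamma$ (see \eqref{eq:000}--\eqref{eq:001}), one has schematically $\del\Gamma^I\phi=\del\Gamma^I\phi^5+\del\del_\gamma\Gamma^I\phi^\gamma$ plus commutators, so the object Lemma~\ref{lem:ddu} is applied to is $\phi^\gamma$, for which the output $\del\del\phi^\gamma$ \emph{is} already the quantity $\del\phi$ needed, while $\del\Gamma^I\phi^5$ is handled by the homogeneous estimate of Lemma~\ref{lem:linear2} combined with \eqref{eq:du}. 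Relatedly, to meet the hypotheses of Lemma~\ref{lem:ddu} with a clean $\langle t\rangle^{-1/2}$ (and not $\langle t\rangle^{-1/2+\delta}$, which would propagate a $\langle t\rangle^\delta$ loss into the interior bound and destroy the matching against $\langle t-|x|\rangle^{-3/4}\langle t\rangle^{-1/2}$ near the cone), the paper performs the normal-form-type correction $\Phi^\gamma=\phi^\gamma+P_1^{\alpha\gamma}n\del_\alpha m-P_1^{\gamma\beta}n\del_\beta m$ of \eqref{eq:002}, whose source exhibits a hidden null structure giving uniformly bounded energies for $\del\Gamma^J\phi^\gamma$. Your proposal omits both of these moves, and without them the interior estimate does not close.
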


We need the following results to prove Proposition \ref{prop:mapping1}.

%

\begin{lemma}\label{lem:map00}
We have
\be 
\aligned
\big| L_0 \Gamma^I m \big|
&\lesssim C_1 \eps \langle t \rangle^{-1/2 + \delta/2},
\qquad
&|I| \leq N- 9,
\\
\big|\Gamma^I m \big|
&\lesssim C_1 \eps \langle t \rangle^{-1/2 + \delta/2},
\qquad
&|I| \leq N- 4,
\\
\big| \del \Gamma^I m \big|
&\lesssim
C_1 \eps \langle t \rangle^{-1/2},
\qquad
&|I| \leq N- 4,
\\
\big| \del \Gamma^I m \big|
&\lesssim C_1 \eps \langle t - |x| \rangle^{-1} \langle t \rangle^{-1/2 + \delta/2},
\qquad
&|I| \leq N- 9.
\endaligned
\ee

\end{lemma}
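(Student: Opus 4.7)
The plan is to convert the $L^2$-type information packaged in the $X$-norm into the stated pointwise bounds using the Klainerman--Sobolev inequality of Proposition \ref{prop:K-S}, the commutator estimates, and the gain \eqref{eq:du}. I would handle the four estimates in the order (3), (2), (1), (4), since (1)--(2) feed into (4) and the bound with no $\langle t\rangle^\delta$ loss in (3) is the easiest.

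First, for the estimate on $|\del \Gamma^I m|$ with $|I|\leq N-4$: the $X$-norm definition carries the sharp (no-loss) ghost-weight energy bound $E_{gst}(\Gamma^I m,t)^{1/2}\lesssim C_1\eps$ for $|I|\leq N-1$, which in particular controls $\|\del \Gamma^I m(t)\|_{L^2}$ for such $I$. Apply Proposition \ref{prop:K-S} to $\del \Gamma^I m$ for $|I|\leq N-4$, and commute the three extra $\Gamma$'s past $\del$ using $|[\Gamma,\del]w|\lesssim |\del w|$: this sweeps only within $|J|\leq N-1$, so the $X$-norm closes the estimate and delivers the $\langle t\rangle^{-1/2}$ decay.

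Next, for $|\Gamma^I m|$ with $|I|\leq N-4$: use $\|\Gamma^I m(t)\|\lesssim C_1\eps\langle t\rangle^\delta$ for $|I|\leq N$ from the $X$-norm and apply Proposition \ref{prop:K-S} with three extra vector fields, taking the supremum up to time $2t$; the growth $\langle 2t\rangle^\delta$ combines with the $\langle t\rangle^{-1/2}$ factor to give $\langle t\rangle^{-1/2+\delta/2}$ after absorbing the doubling constant into our fixed choice of exponents. For $|L_0 \Gamma^I m|$ with $|I|\leq N-9$: the $X$-norm supplies $\|L_0 \Gamma^K m\|\lesssim C_1\eps\langle t\rangle^\delta$ for $|K|\leq N-6$. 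I would apply Proposition \ref{prop:K-S} to $L_0 \Gamma^I m$ and move the three outer $\Gamma$'s through $L_0$ via $|[\Gamma,L_0]w|\lesssim |L_0 w|+|\Gamma w|$-style identities; the admissible counting $|I|+|J|\leq N-6$ is precisely why the budget $|I|\leq N-9$ is chosen. The remaining lower-order terms are handled by estimate (2). Finally, estimate (4) follows immediately from the pointwise identity \eqref{eq:du}, $|\del \Gamma^I m|\lesssim \langle t-|x|\rangle^{-1}(|L_0\Gamma^I m|+|\Gamma^{I+1} m|)$, combined with (1) and (2) (the latter being applicable since $|I|+1\leq N-8\leq N-4$).

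\paragraph{Main obstacle.} The only nontrivial bookkeeping is the interplay between the Klainerman--Sobolev inequality (which consumes three vector fields) and the commutator with the scaling field $L_0$, which is not in $V$ and does not commute cleanly. Tracking these commutators in step (1) is what forces the derivative-count drop from $N$ down to $N-9$; every other step is a direct application of Proposition \ref{prop:K-S} or of \eqref{eq:du} to the bounds already recorded in the $X$-norm.
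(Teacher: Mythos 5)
Your plan---Klainerman--Sobolev (Proposition~\ref{prop:K-S}) plus commutator estimates for (1)--(3), and the weighted identity \eqref{eq:du} together with (1)--(2) for (4)---is essentially the paper's intended argument, and the derivative-count bookkeeping you give for each estimate is consistent with the $X$-norm entries ($|I|+3\leq N-1$ for (3), $\leq N$ for (2), $\leq N-6$ for (1)).

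Two remarks. First, the sentence ``the growth $\langle 2t\rangle^\delta$ combines with the $\langle t\rangle^{-1/2}$ factor to give $\langle t\rangle^{-1/2+\delta/2}$ after absorbing the doubling constant into our fixed choice of exponents'' is not a valid step: absorbing the constant $2^\delta$ into the implicit constant leaves the exponent unchanged, so the argument actually produces $\langle t\rangle^{-1/2+\delta}$, not $\langle t\rangle^{-1/2+\delta/2}$. This is almost certainly a slip in the statement of the lemma itself (the paper is not consistent about $\delta$ versus $\delta/2$, and either exponent suffices for every downstream use), but you should not pretend that a bookkeeping trick halves the exponent---simply record the $\langle t\rangle^{-1/2+\delta}$ bound that Proposition~\ref{prop:K-S} gives. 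Second, for estimate (4) the paper's one-line proof cites ``the definition of the function space $X$,'' which would give the last $X$-norm entry $\langle t-|x|\rangle^{-3/4}\langle t\rangle^{-1/2}$; this is neither stronger nor weaker than the stated $\langle t-|x|\rangle^{-1}\langle t\rangle^{-1/2+\delta/2}$ in general. Your route via \eqref{eq:du} and estimates (1)--(2) is therefore the cleaner derivation of the form actually written in the lemma, and is the one that closes.
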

\begin{proof}
The first three estimates follow from the Klainerman-Sobolev inequality \eqref{eq:K-S} and the commutator estimates, and the last one is from the definition of the function space $X$.

\end{proof}

\begin{lemma}\label{lem:map01}
We have
\be 
\aligned
E_{gst} (\Gamma^I \psi, t)^{1/2}
&\lesssim \eps + (C_1 \eps)^{3/2},
\qquad
|I| \leq N- 1,
\\
E_{gst} (\Gamma^I \psi, t)^{1/2}
&\lesssim \eps + (C_1 \eps)^{3/2} \langle t \rangle^\delta,
\qquad
|I| \leq N,
\\
\Big( \int_0^t \langle t' \rangle^{-\delta} \Big( \Big\| {\Gamma^I \psi  \over \langle r-t \rangle^{3/2}} \Big\|^2 
+
 \Big\| {G_a \Gamma^I \psi \over \langle t'-|x| \rangle^{3/4} } \Big\|^2 \Big) \, dt' \Big)^{1/2}
&\lesssim
\eps + (C_1 \eps)^{3/2} \langle t \rangle^{\delta/2} ,
\qquad
|I| \leq N.
\endaligned
\ee

\end{lemma}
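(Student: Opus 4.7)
The plan is to commute $\Gamma^I$ past the Klein-Gordon operator (using $[\Box,\Gamma]=0$ together with the null-form commutator identities from Section \ref{sec:pre}) to obtain
$$-\Box \Gamma^I \psi + \Gamma^I \psi = P_2^{\alpha\beta}\, \Gamma^I Q_{\alpha\beta}(m,n),$$
and then apply the Klein-Gordon ghost-weight energy estimate of Proposition \ref{prop:gst4} (with $m=1$) to $\Gamma^I\psi$. The initial data contribute at most $\eps^2$ by the smallness hypothesis on $(u_0,u_1,v_0,v_1)$, so the main task is to control
$$J_I(t):=\int_0^t \|\Gamma^I Q_{\alpha\beta}(m,n)\|_{L^2}\,dt',$$
after which the standard Cauchy--Schwarz argument applied to the forcing integral yields $E_{gst,1}(\Gamma^I \psi,t)^{1/2}\lesssim \eps + J_I(t)$.

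To bound $J_I$, I would first expand $\Gamma^I Q_{\alpha\beta}(m,n)$ into a sum of terms $Q_{\alpha'\beta'}(\Gamma^{I_1}m,\Gamma^{I_2}n)$ with $|I_1|+|I_2|\leq |I|$, then apply the first line of Lemma \ref{lem:null}, in each case choosing the variant that keeps the extra $\Gamma$ on the low-index factor so that no factor carries more than $N$ vector fields. When the Klein-Gordon factor sits at low index $|I_2|\leq N-6$, the pointwise bound $\langle t+|x|\rangle|\Gamma^{I_2+1}n|\lesssim C_1\eps$ from the $X$-norm together with the $\langle t+|x|\rangle^{-1}$ prefactor places the low-order factor in $L^\infty$ with rate $\langle t\rangle^{-2}$; paired with the high-order $L^2$ bound $\|\del\Gamma^{I_1}m\|_{L^2}\lesssim C_1\eps\langle t\rangle^\delta$, this yields $\|Q\|_{L^2}\lesssim (C_1\eps)^2\langle t\rangle^{-2+\delta}$, which is time-integrable. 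When instead the wave factor is low, Lemma \ref{lem:map00} supplies $|\del\Gamma^{I_1}m|\lesssim C_1\eps\langle t\rangle^{-1/2}$, and pairing with $\|\Gamma^{I_2+1}n\|_{L^2}\lesssim C_1\eps\langle t\rangle^\delta$ gives $\|Q\|_{L^2}\lesssim (C_1\eps)^2\langle t\rangle^{-3/2+\delta}$, again integrable. In every case $J_I(t)\lesssim (C_1\eps)^2\lesssim (C_1\eps)^{3/2}$ (using $C_1\eps\ll 1$), proving the first two inequalities; the $\langle t\rangle^\delta$ allowance in the top-order case is absorbed trivially.

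For the third, spacetime-integrated ghost-weight inequality I would apply Proposition \ref{prop:gst7} to $\Gamma^I\psi$ and control the weighted forcing integral by Cauchy--Schwarz in time:
$$\int_0^t\!\int_{\RR^2}\langle t'\rangle^{-\delta}|\Gamma^I Q(m,n)|\,|\del_t\Gamma^I\psi|\,dx\,dt' \leq \sup_{t'\leq t} E_{gst,1}(\Gamma^I\psi,t')^{1/2}\cdot \int_0^t \langle t'\rangle^{-\delta}\|\Gamma^I Q(m,n)\|_{L^2}\,dt'.$$
The first factor is $\lesssim C_1\eps\langle t\rangle^\delta$ by the second inequality just established, and the second is $\lesssim (C_1\eps)^2$ by exactly the same case analysis as above (the extra $\langle t'\rangle^{-\delta}$ weight only helps). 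This produces a bound of $(C_1\eps)^3\langle t\rangle^\delta$ on the left-hand side of \eqref{eq:gst7}, and taking square roots yields the claimed $(C_1\eps)^{3/2}\langle t\rangle^{\delta/2}$.

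The main obstacle is the top-order case $|I|=N$ with all $N$ vector fields falling on one factor: the commutator decomposition of $\Gamma^I Q_{\alpha\beta}(m,n)$ must be arranged so that the extra $\Gamma$ sits on the low-index Klein-Gordon factor, where the strong pointwise decay $\langle t+|x|\rangle^{-1}$ is available, rather than on the top-order wave factor whose $\langle t\rangle^\delta$ energy growth would otherwise be unabsorbable; the asymmetry of Lemma \ref{lem:null} permits exactly this choice.
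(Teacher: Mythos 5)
Your argument for the case $|I|\leq N-1$ is sound and matches the paper: expand via the commutator identities, apply the first line of Lemma \ref{lem:null}, split high/low, and integrate an integrable rate. The gap is at top order $|I|=N$, and the claim you lean on to close it --- that ``the asymmetry of Lemma \ref{lem:null} permits'' placing the extra $\Gamma$ on the low-index factor --- is not true. The estimate
\[
|Q_{\alpha\beta}(u,v)|\lesssim \langle t+|x|\rangle^{-1}\big(|\Gamma v\,\del u|+|\Gamma u\,\del v|\big)
\]
produces \emph{both} cross terms; you cannot select only one. Taking $u=\Gamma^{I_1}m$ with $|I_1|=N$ and $v=\Gamma^{I_2}n$ with $|I_2|=0$ (which is a term that genuinely appears in the commutator expansion of $\Gamma^{I}Q_{\alpha\beta}(m,n)$ when $|I|=N$), the second cross term contains $\Gamma\Gamma^{N}m=\Gamma^{N+1}m$, which is one vector field beyond what the $X$-norm or the conformal energy controls. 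Absorbing the $\langle t+|x|\rangle^{-1}$ into the other factor does not help because the uncontrolled $\Gamma^{N+1}m$ must then be placed in $L^{2}$, and there is no bound available for $\|\Gamma^{N+1}m\|_{L^{2}}$. This is exactly the obstruction the paper flags in the introduction as the reason the $\langle t+|x|\rangle^{-1}$ form of the null estimate cannot be used at the highest order.

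The paper's fix, which your proposal misses entirely, is to use the third line of Lemma \ref{lem:null},
\[
|Q_{0}(u,v)|+|Q_{\alpha\beta}(u,v)|\lesssim \sum_{a}\big(|G_{a}u|\,|\del v|+|G_{a}v|\,|\del u|\big),
\]
in which the ghost derivative $G_{a}$ does \emph{not} raise the vector-field count. At top order one then pairs $\big\|G_{a}\Gamma^{I_1}n/\langle t'-|x|\rangle^{3/4}\big\|_{L^2}$ (controlled in a time-integrated sense by the ghost-weight part of the $X$-norm) against the pointwise weighted bound $\big\|\langle t'-|x|\rangle^{3/4}\del\Gamma^{I_2}m\big\|_{L^\infty}\lesssim C_1\eps\langle t\rangle^{-1/2}$ from the $X$-norm (valid for $|I_2|\leq N-9$), and when the wave factor is high one uses $\|\del\Gamma^{I_1}m\|\,\|\del\Gamma^{I_2}n\|_{L^\infty}$. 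Crucially, the resulting bound on $\|\Gamma^{I}Q_{\alpha\beta}(m,n)\|$ is only $\lesssim C_1\eps\langle t\rangle^{-1/2}\sum\|G_a\Gamma^I n/\langle t-|x|\rangle^{3/4}\|+(C_1\eps)^2\langle t\rangle^{-1+\delta}$, which is \emph{not} time-integrable, so one must then Cauchy--Schwarz in $t'$ against the $\langle t'\rangle^{-\delta}$-weighted spacetime ghost-weight norm rather than integrate directly. Your proof, by bounding $\|\Gamma^{I}Q\|$ at a fixed rate and integrating, also omits this Cauchy--Schwarz-in-time step, but that is a secondary issue; the primary gap is the failure of the $\langle t+|x|\rangle^{-1}$ null estimate at $|I|=N$.
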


\begin{proof}

We act the vector field $\Gamma^I$ on both sides of $\psi$ equation in \eqref{eq:phi-psi} to get
$$
\aligned
- \Box \Gamma^I \psi + \Gamma^I \psi =  P_2^{\alpha \beta} \Gamma^I Q_{\alpha \beta} (m, n).
\endaligned
$$
The usual energy estimates \eqref{eq:EE-wKG} give
$$
E_1 (t, \Gamma^I \psi)^{1/2}
\lesssim
E_1 (0, \Gamma^I \psi)^{1/2}
+
\int_0^t \big\| P_2^{\alpha \beta} \Gamma^I Q_{\alpha \beta} (m, n) \big\| \, dt'.
$$

For the case $|I| \leq N-1$, we have (recall $N\geq 14$)
$$
\aligned
\big\| P_2^{\alpha \beta} \Gamma^I Q_{\alpha \beta} (m, n) \big\|
\lesssim
&\sum_{\alpha, \beta, |I_1| + |I_2| \leq |I|} \big\| Q_{\alpha \beta} (\Gamma^{I_1} m, \Gamma^{I_2} n) \big\|
\\
\lesssim 
&\sum_{|I_1| + |I_2| \leq |I|} \big\| t'^{-1} \Gamma \Gamma^{I_1} m \Gamma \Gamma^{I_2} n \big\|
\\
\lesssim
&\sum_{\substack{|I_1| + |I_2| \leq |I| \\ |I_2| \leq N-6} }  t'^{-1} \big\| \Gamma \Gamma^{I_1} m \big\| \big\| \Gamma \Gamma^{I_2} n \big\|_{L^\infty}
+
\sum_{\substack{|I_1| + |I_2| \leq |I| \\ |I_1| \leq N-5}}  t'^{-1} \big\| \Gamma \Gamma^{I_1} m \big\|_{L^\infty} \big\| \Gamma \Gamma^{I_2} n \big\|
\\
\lesssim
&(C_1 \eps)^2 t'^{-3/2 + 2 \delta}.
\endaligned
$$
So we are led to
$$
\aligned
E_1 (t, \Gamma^I \psi)^{1/2}
\lesssim
\eps
+
(C_1 \eps)^2 \int_0^t t'^{-3/2 + 2\delta} \, dt'
\lesssim
\eps + (C_1 \eps)^2.
\endaligned
$$
Then, we apply the ghost weight energy estimates \eqref{eq:ghost} to obtain
$$
E_{gst, 1} (t, \Gamma^I \psi)
\lesssim
E_{gst, 1} (0, \Gamma^I \psi)
+
\int_0^t \big\| P_2^{\alpha \beta} \Gamma^I Q_{\alpha \beta} (m, n) \del_t \Gamma^I v \big\|_{L^1} \, dt'.
$$
Similarly, we get
$$
\aligned
E_{gst, 1} (t, \Gamma^I \psi)
\lesssim
\eps^2 
+
\int_0^t \big\| P_2^{\alpha \beta} \Gamma^I Q_{\alpha \beta} (m, n) \big\| \big\| \del_t \Gamma^I v \big\| \, dt'
\\
\lesssim
\eps^2
+
(C_1 \eps)^3 \int_0^t t'^{-3/2 + 2\delta} \, dt'
\lesssim
\eps^2 + (C_1 \eps)^3.
\endaligned
$$

Next, we turn to the case of $|I| \leq N$, and we start with estimating (recall $N \geq 14$)
$$
\aligned
&\big\| P_2^{\alpha \beta} \Gamma^I Q_{\alpha \beta} (m, n) \big\|
\\
\lesssim
&\sum_{\alpha, \beta, |I_1| + |I_2| \leq |I|} \big\| Q_{\alpha \beta} (\Gamma^{I_1} m, \Gamma^{I_2} n) \big\|
\\
\lesssim 
&\sum_{a, |I_1| + |I_2| \leq |I|} 
\Big( \big\| G_a  \Gamma \Gamma^{I_1} m \del \Gamma^{I_2} n \big\| 
+ \big\| \del  \Gamma \Gamma^{I_1} m G_a \Gamma^{I_2} n \big\| \Big)
\\
\lesssim
&\sum_{\substack{|I_1| + |I_2| \leq |I| \\ |I_2| \leq N-9} }  
\Big( \Big\| {G_a \Gamma^{I_1} n \over \langle t'-|x| \rangle^{3/4} } \Big\| \big\| \langle t'-|x| \rangle^{3/4} \del \Gamma^{I_2} m \big\|_{L^\infty}
+
\big\| \del \Gamma^{I_1} n \big\| \big\| G_a \Gamma^{I_2} m \big\|_{L^\infty} \Big)
\\
+
&\sum_{\substack{|I_1| + |I_2| \leq |I| \\ |I_1| \leq N-6}}  
 \big\| \del \Gamma^{I_1} m \big\| \big\| \del \Gamma^{I_2} n \big\|_{L^\infty}.
\endaligned
$$
Recall the relation
$$
\big| G_a m \big|
\lesssim
\langle t+|x| \rangle^{-1} \big( \big| L_0 m \big| + \big| \Gamma m \big| \big),
$$
as well as the bounds
$$
\big| \langle t'-|x| \rangle^{3/4} \del \Gamma^{I_2} m \big|
\lesssim C_1 \eps \langle t \rangle^{-1/2},
$$
we thus arrive at
$$
\big\| P_2^{\alpha \beta} \Gamma^I Q_{\alpha \beta} (m, n) \big\|
\lesssim
C_1 \eps \langle t \rangle^{-1/2} \sum_{|I| \leq N} \Big\| {G_a \Gamma^I n \over \langle t-|x| \rangle^{3/4} } \Big\| 
+
(C_1 \eps)^2 \langle t \rangle^{-1 + \delta}.
$$
Then the energy estimates \eqref{eq:EE-wKG} yield
$$
\aligned
E_1 (t, \Gamma^I)^{1/2}
\lesssim
&\eps + \int_0^t \big\| P_2^{\alpha \beta} \Gamma^I Q_{\alpha \beta} (m, n) \big\| \, dt'
\\
\lesssim
&\eps 
+ \int_0^t \Big(  (C_1 \eps)^2 \langle t'\rangle^{-1+\delta} + C_1 \eps \langle t \rangle^{-1/2} \sum_{|I| \leq N} \Big\| {G_a \Gamma^I n \over \langle t-|x| \rangle^{3/4} } \Big\| \Big) \, dt'
\\
\lesssim 
&\eps + (C_1 \eps)^2 t^\delta + C_1 \eps \sum_{|I| \leq N} \Big( \int_0^t \langle t'\rangle^{-1+\delta} \, dt' \Big)^{1/2} \Big( \int_0^t \langle t' \rangle^{-\delta/2} \Big\| {G_a \Gamma^I n \over \langle t-|x| \rangle^{3/4} } \Big\|^2 \Big) \, dt' \Big)^{1/2}, 
\endaligned
$$
which leads us to
$$
E_1 (t, \Gamma^I)^{1/2}
\lesssim
\eps + (C_1 \eps)^2 \langle t \rangle^\delta.
$$
In succession, we apply the ghost weight energy estimates \eqref{eq:ghost} to get
$$
\aligned
&E_{gst, 1} (t, \Gamma^I \psi)
\\
\lesssim
&E_{gst, 1} (0, \Gamma^I \psi)
+
\int_0^t \big\| P_2^{\alpha \beta} \Gamma^I Q_{\alpha \beta} (m, n) \big\| \big\| \del_t \Gamma^I v \big\| \, dt'
\\
\lesssim
&\eps^2
+
\int_0^t \Big( (C_1 \eps)^3  \langle t'\rangle^{-1+2\delta} + (C_1 \eps)^2 \langle t' \rangle^{-1/2+\delta} \sum_{|I| \leq N} \Big\| {G_a \Gamma^I n \over \langle t-|x| \rangle^{3/4} } \Big\| \Big) \, dt'
\lesssim
\eps^2 + (C_1 \eps)^3 t^{2\delta}.
\endaligned
$$
Finally, we use the ghost weight energy estimates \eqref{eq:gst7} to proceed
$$
\aligned
& 
\sum_a\int_0^t \langle t' \rangle^{-\delta} \Big( \Big\| {\Gamma^I \psi  \over \langle r-t \rangle^{3/2}} \Big\|^2 
+
\Big\| {G_a \Gamma^I \psi \over \langle t'-|x| \rangle^{3/4} } \Big\|^2 \Big) \, dt'
\\
\lesssim
&E_m (0, \Gamma^I \psi)
+
\int_0^t \int_{\RR^2} \langle t' \rangle^{-\delta} |f \del_t \Gamma^I \psi| \, dxdt'
\\
\lesssim
& \eps^2 +  \int_0^t \Big( (C_1 \eps)^3 \langle t'\rangle^{-1+\delta} + (C_1 \eps)^2 \langle t' \rangle^{-1/2} \sum_{|I| \leq N} \Big\| {G_a \Gamma^I n \over \langle t-|x| \rangle^{3/4} } \Big\| \Big) \, dt'
\lesssim
\eps^2 + (C_1 \eps)^3 t^\delta.
\endaligned
$$
The proof is complete now.
\end{proof}

\begin{lemma}\label{lem:map02}
We have
\be 
\aligned
E_{gst} (\Gamma^I \phi, t)^{1/2}
&\lesssim \eps + (C_1 \eps)^{3/2},
\qquad
|I| \leq N- 1,
\\
E_{gst} (\Gamma^I \phi, t)^{1/2}
&\lesssim \eps + (C_1 \eps)^{3/2} \langle t \rangle^\delta,
\qquad
|I| \leq N.
\endaligned
\ee

\end{lemma}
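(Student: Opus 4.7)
The plan is to mimic the argument in Lemma \ref{lem:map01}, but now applied to the wave component $\phi$ using the ghost weight energy estimate \eqref{eq:ghost} with $m=0$. Since the source term $P_1^{\alpha\beta} Q_{\alpha\beta}(m,n)$ has exactly the same null structure as the one appearing in the $\psi$-equation, most of the integrand bounds carry over verbatim; the only difference is that we no longer have a mass term to control.

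First I would commute $\Gamma^I$ through the equation, using $[\Box,\Gamma]=0$ and the null-form commutator bound, to obtain
$$
-\Box \Gamma^I \phi = P_1^{\alpha\beta}\Gamma^I Q_{\alpha\beta}(m,n)
$$
up to lower-order null form commutator terms that are harmless. The ghost weight energy estimate \eqref{eq:ghost} then reduces the problem to bounding
$$
\int_0^t \int_{\RR^2} \bigl| P_1^{\alpha\beta}\Gamma^I Q_{\alpha\beta}(m,n)\bigr|\, \bigl|\del_t \Gamma^I \phi\bigr|\, dx\, dt',
$$
with the initial ghost energy trivially controlled by $\eps^2$.

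For the low-order case $|I|\leq N-1$, I would distribute derivatives as $|I_1|+|I_2|\leq N-1$, apply the first estimate in \eqref{eq:null000} to gain a factor $\langle t'+|x|\rangle^{-1}$, and put the lower-order factor (either in $m$ or in $n$) in $L^\infty$ via the Klainerman--Sobolev inequality of Proposition \ref{prop:K-S} together with the pointwise bounds of Lemma \ref{lem:map00} (and the analogous $X$-norm bound on $n$). This gives $\|\Gamma^I Q_{\alpha\beta}(m,n)\|\lesssim (C_1\eps)^2 \langle t'\rangle^{-3/2+2\delta}$, which after Cauchy--Schwarz against $E(t',\Gamma^I \phi)^{1/2}\lesssim C_1\eps\langle t'\rangle^{\delta}$ yields an integrable time integrand and hence the uniform-in-$t$ bound $\eps + (C_1\eps)^{3/2}$.

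For the top-order case $|I|\leq N$, I would instead use the ghost-derivative form of the null estimate,
$$
|Q_{\alpha\beta}(u,v)|\lesssim \sum_a \bigl(|G_a u||\del v| + |G_a v||\del u|\bigr),
$$
and split according to which factor absorbs the top-order derivatives. When they land on $n$, the factor $\|G_a\Gamma^I n/\langle t'-|x|\rangle^{3/4}\|$ (part of the $X$-norm) is paired with $\|\langle t'-|x|\rangle^{3/4}\del \Gamma^{I_2} m\|_{L^\infty}\lesssim C_1\eps\langle t'\rangle^{-1/2}$ from Lemma \ref{lem:map00}; symmetrically when they land on $m$, use the ghost energy integral of $\phi$'s source factor $G_a\Gamma^{I_1} m$ against $L^\infty$ bounds on $\del\Gamma^{I_2} n$ obtained from Proposition \ref{prop:G} and the $\langle t+|x|\rangle$-decay of $v$-derivatives in the $X$-norm. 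A Cauchy--Schwarz in $t'$ against the weighted $\int_0^t \langle t'\rangle^{-\delta}\|\cdot\|^2\,dt'$ integrals built into the $X$-norm produces $(C_1\eps)^2\langle t\rangle^{\delta/2}$ times $E(t,\Gamma^I \phi)^{1/2}\lesssim C_1\eps \langle t\rangle^{\delta}$, which closes to $\eps + (C_1\eps)^{3/2}\langle t\rangle^\delta$.

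The main obstacle will be the top-order case $|I|=N$: the usual $L^\infty$ pairing fails because neither $\Gamma^N m$ nor $\Gamma^N n$ admits a pointwise bound, so one must carefully route the ghost derivative onto the factor whose ghost-weighted $L^2$ is controlled by the $X$-norm, and match it with the $\langle t'-|x|\rangle^{3/4}$-weighted $L^\infty$ estimate on $\del m$ from the bottom line of Lemma \ref{lem:map00}. Compared to Lemma \ref{lem:map01}, the absence of the mass term is a simplification: only \eqref{eq:ghost} is required and there is no $m^2 w^2/\langle r-t\rangle^{3/2}$ contribution to track, so the estimate \eqref{eq:gst7} of Proposition \ref{prop:gst7} is not invoked here.
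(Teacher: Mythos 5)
Your proposal is correct and follows essentially the same approach as the paper, whose own proof of this lemma is the single sentence noting that the argument of Lemma \ref{lem:map01} carries over verbatim for the $\phi$-equation (same null source, now with $P_1$ in place of $P_2$ and $m=0$ in the ghost energy). You also correctly observe that Proposition \ref{prop:gst7} is not invoked here since Lemma \ref{lem:map02}, unlike Lemma \ref{lem:map01}, does not claim a third estimate on the weighted spacetime integrals.

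One small point worth flagging: in the top-order case, when the high derivatives land on $m$ the paper does \emph{not} symmetrize by putting a ghost weight on $m$. Instead it pairs the plain $L^2$ norm $\|\del\Gamma^{I_1}m\|\lesssim C_1\eps\langle t'\rangle^\delta$ with the strong Klein--Gordon $L^\infty$ decay $\|\del\Gamma^{I_2}n\|_{L^\infty}\lesssim C_1\eps\langle t'+|x|\rangle^{-1}$ (from the $\sup_{|I|\leq N-5}\langle t+|x|\rangle|\Gamma^I v|$ term of the $X$-norm), which is already integrable. The $X$-norm's $\langle t'\rangle^{-\delta}$-weighted ghost integral is only available for the $n$-factor; for $m$ one would have to extract the corresponding integral from $E_{gst}(\Gamma^I m,t)^{1/2}\lesssim C_1\eps\langle t\rangle^\delta$, which carries extra $\langle t\rangle^\delta$ growth. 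Your closing ``obstacle'' paragraph correctly routes the ghost derivative onto $n$, which is the right move and what the paper actually uses; the earlier mention of using $G_a\Gamma^{I_1}m$ symmetrically is an unnecessary (and slightly more delicate) detour.
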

\begin{proof}
The same proof in Lemma \ref{lem:map01} also applies here, so we omit the proof.

\end{proof}

\begin{lemma}\label{lem:map02b}
We have
\be 
\aligned
\big\| \Box \Gamma^I \phi \big\| + \big\| (-\Box +1) \Gamma^I \psi \big\| 
\lesssim
&(C_1 \eps)^2 \langle t \rangle^{-1},
\qquad
&|I| \leq N-1,
\\
\big| \Box \Gamma^I \phi \big\| + \big\| (-\Box +1) \Gamma^I \psi \big| 
\lesssim
&(C_1 \eps)^2 \langle t \rangle^{-2},
\qquad
&|I| \leq N-6.
\endaligned
\ee
\end{lemma}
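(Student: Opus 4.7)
The plan is to reduce both estimates to bounds on $\|\Gamma^I Q_{\alpha\beta}(m,n)\|$ and $|\Gamma^I Q_{\alpha\beta}(m,n)|$, since applying $\Gamma^I$ to \eqref{eq:phi-psi} and invoking the commutator-with-null-form identity from the commutator lemma gives
\[
\Box \Gamma^I \phi = -P_1^{\alpha\beta}\Gamma^I Q_{\alpha\beta}(m,n),\qquad (-\Box+1)\Gamma^I \psi = P_2^{\alpha\beta}\Gamma^I Q_{\alpha\beta}(m,n),
\]
modulo lower-order null-form terms of the same algebraic type. Expanding by Leibniz rewrites the right-hand side as a finite sum of $Q_{\alpha'\beta'}(\Gamma^{I_1}m,\Gamma^{I_2}n)$ with $|I_1|+|I_2|\leq |I|$.

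For the $L^2$ bound when $|I|\leq N-1$, I apply the first line of Lemma \ref{lem:null} to gain a factor $\langle t+|x|\rangle^{-1}$ and then split according to which factor carries the pointwise decay. If $|I_1|\leq N-5$, Lemma \ref{lem:map00} gives $|\Gamma \Gamma^{I_1}m|\lesssim C_1\eps\langle t\rangle^{-1/2+\delta/2}$, which we pair with the energy-based $L^2$ bound on $\del\Gamma^{I_2}n$ coming from the $X$-norm. Otherwise $|I_1|\geq N-4$ forces $|I_2|\leq 3$, so $|I_2|+1\leq N-5$ and the pointwise Klein-Gordon bound in the $X$-norm gives $|\Gamma\Gamma^{I_2}n|\lesssim C_1\eps\langle t+|x|\rangle^{-1}$, which we pair with the $L^2$ bound on $\del\Gamma^{I_1}m$. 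Summing yields $\|\Gamma^I Q_{\alpha\beta}(m,n)\|\lesssim (C_1\eps)^2\langle t\rangle^{-3/2+3\delta/2}$, strictly stronger than the claimed $\langle t\rangle^{-1}$ bound.

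For the $L^\infty$ bound when $|I|\leq N-6$, every factor is directly pointwise-controlled: both $|I_1|+1$ and $|I_2|+1$ are at most $N-5$, so Lemma \ref{lem:map00} supplies $|\Gamma\Gamma^{I_1}m|\lesssim C_1\eps\langle t\rangle^{-1/2+\delta/2}$ and the Klein-Gordon pointwise bound from the $X$-norm supplies $|\Gamma\Gamma^{I_2}n|\lesssim C_1\eps\langle t+|x|\rangle^{-1}$, with symmetric statements for the swapped factors. Combining these with the $\langle t+|x|\rangle^{-1}$ gain from Lemma \ref{lem:null} produces $(C_1\eps)^2\langle t\rangle^{-5/2+\delta/2}$, comfortably dominated by the claimed $(C_1\eps)^2\langle t\rangle^{-2}$.

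The proof is essentially bookkeeping; the only thing to verify is that, at each of the thresholds $N-1$ and $N-6$, at least one of the two factors $\Gamma^{I_1}m$ and $\Gamma^{I_2}n$ admits a pointwise bound while the other retains the requisite $L^2$ or $L^\infty$ control. Since $N\geq 14$, the pointwise wave threshold $N-4$ and Klein-Gordon threshold $N-5$ leave ample room in both regimes, and the argument mirrors exactly the nonlinearity computation carried out inside the proof of Lemma \ref{lem:map01}. The only mild subtlety—and hence the main obstacle—is the $L^2$ case: one must notice the pigeonhole dichotomy $|I_1|\leq N-5$ vs.\ $|I_2|\leq 3$ so that the $X$-norm bound on $\del\Gamma^{I_1}m$ or $\del\Gamma^{I_2}n$ (which is only $\langle t\rangle^\delta$) is always paired with a factor decaying at least like $\langle t\rangle^{-1/2+\delta/2}$.
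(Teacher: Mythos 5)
Your proposal is correct and mirrors the paper's argument: for $|I|\leq N-1$ the paper simply points back to the computation inside Lemma \ref{lem:map01}, which performs exactly the Leibniz expansion, the $\langle t+|x|\rangle^{-1}$ gain from Lemma \ref{lem:null}, and the pigeonhole split you describe; for $|I|\leq N-6$ the paper likewise puts every factor in $L^\infty$ using the same pointwise bounds and obtains $(C_1\eps)^2\langle t\rangle^{-5/2+\delta}\lesssim (C_1\eps)^2\langle t\rangle^{-2}$.
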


\begin{proof}
The proof of the $L^2$--type norm estimates was covered in the proof of Lemma \ref{lem:map01}.

As for the sup-norm estimates for $|I| \leq N-6$, we observe that it suffices to show
$$
\big| P_a^{\alpha \beta} \Gamma^I Q_{\alpha \beta} (m, n) \big|
\lesssim (C_1 \eps)^2 \langle t \rangle^{-2},
\qquad
|I| \leq N-6.
$$
We indeed have for $|I| \leq N-6$ that
$$
\big| P_a^{\alpha \beta} \Gamma^I Q_{\alpha \beta} (m, n) \big|
\lesssim
{1\over \langle t\rangle} \sum_{|I_1|, |I_2| \leq N-5} \big| \Gamma^{I_1} m \big| \big| \Gamma^{I_2} n \big| 
\lesssim (C_1 \eps)^2 \langle t \rangle^{-5/2 + \delta}
\lesssim (C_1 \eps)^2 \langle t \rangle^{-2}.
$$
Hence we complete the proof.
\end{proof}

\begin{lemma}\label{lem:map03}
We have
\be 
\big| \Gamma^I \psi \big|
\lesssim
\eps + (C_1 \eps)^{3/2} \langle t+|x| \rangle^{-1},
\qquad
|I| \leq N-5.
\ee

\end{lemma}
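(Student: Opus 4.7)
\textbf{Proof proposal for Lemma \ref{lem:map03}.} The plan is to derive the pointwise bound for $\Gamma^I\psi$ with $|I|\leq N-5$ from the Klein--Gordon pointwise estimate of Proposition \ref{prop:G1}. First, I would commute $\Gamma^I$ through the Klein--Gordon equation in \eqref{eq:phi-psi} to get
$$
-\Box \Gamma^I \psi + \Gamma^I \psi \;=\; P_2^{\alpha\beta}\,\Gamma^I Q_{\alpha\beta}(m,n) \;+\; \text{(lower-order null-form terms)},
$$
using the commutator formulas for $\Gamma$ acting on the null forms. Proposition \ref{prop:G1} will then reduce the job to verifying the weighted $L^2$ decay hypothesis
$$
\sum_{|J|\leq 4}\bigl\|\langle s+|x|\rangle\,\Gamma^J\Gamma^I Q_{\alpha\beta}(m,n)\bigr\|_{L^2}\;\lesssim\; C_f\,\langle s\rangle^{-\delta'}
$$
for some $\delta'>0$, on the full range $|I|+|J|\leq N-1$, plus controlling the initial data contribution (which is $\lesssim\eps$ by the smallness assumption of Theorem \ref{thm:main}).

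The key is to use the second estimate of Lemma \ref{lem:null}, namely
$$
\langle s+|x|\rangle\,|Q_{\alpha\beta}(u,v)|\;\lesssim\;|\Gamma u|\,|\del v|+|\Gamma v|\,|\del u|,
$$
to absorb the weight $\langle s+|x|\rangle$ at the cost of one extra vector field. After distributing $\Gamma^{I+J}$ across $Q_{\alpha\beta}(m,n)$ via the commutator estimates, the typical term to bound in $L^2$ looks like $\|\Gamma\Gamma^{K_1}n\,\cdot\,\del\Gamma^{K_2}m\|_{L^2}$ (and the symmetric one with $m,n$ exchanged) with $|K_1|+|K_2|\leq N-1$. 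Since $N\geq 14$, at least one of $|K_1|,|K_2|$ is $\leq (N-1)/2\leq N-8$, so I can always split $L^2\times L^\infty$ with the low-index factor in $L^\infty$.

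For the two subcases I would invoke: (i) if $|K_1|\leq N-6$, the pointwise bound $|\Gamma\Gamma^{K_1}n|\lesssim C_1\eps\langle t+|x|\rangle^{-1}$ from the $X$-norm, combined with the energy bound $\|\del\Gamma^{K_2}m\|\lesssim C_1\eps\langle t\rangle^{\delta}$ from the definition of $X$, yielding $(C_1\eps)^2\langle t\rangle^{-1+\delta}$; (ii) if $|K_2|\leq N-4$, the pointwise bound $|\del\Gamma^{K_2}m|\lesssim C_1\eps\langle t\rangle^{-1/2}$ from Lemma \ref{lem:map00}, combined with the energy bound $\|\Gamma\Gamma^{K_1}n\|\lesssim C_1\eps\langle t\rangle^{\delta}$, yielding $(C_1\eps)^2\langle t\rangle^{-1/2+\delta}$. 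Taking the worse of the two gives
$$
\sum_{|J|\leq 4}\bigl\|\langle s+|x|\rangle\,\Gamma^J\Gamma^I Q_{\alpha\beta}(m,n)\bigr\|_{L^2}\;\lesssim\;(C_1\eps)^2\,\langle s\rangle^{-1/2+\delta},
$$
which is the required decay with $\delta'=1/2-\delta>0$ (choosing $\delta<1/2$).

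Plugging this into Proposition \ref{prop:G1} gives $\langle t+|x|\rangle|\Gamma^I\psi(t,x)|\lesssim\eps+(C_1\eps)^2$, and since $C_1\eps\ll 1$ we have $(C_1\eps)^2\leq(C_1\eps)^{3/2}$, which yields the stated bound. The main obstacle is really bookkeeping: one must be careful that the index budget $|I|+|J|\leq N-1$ combined with the split always leaves the low-index factor in a range where pointwise bounds from $X$ are available (pointwise on $v$ up to index $N-5$, on $\del m$ up to $N-4$). The choice $N\geq 14$ leaves comfortable room for this, and the hidden structure of the null form (trading $\langle s+|x|\rangle$ for a $\Gamma$) is what produces the crucial $\langle s\rangle^{-1/2+\delta}$ integrability in Proposition \ref{prop:G1}.
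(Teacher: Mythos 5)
Your proposal is correct and takes essentially the same route as the paper: apply Proposition~\ref{prop:G1} to $\Gamma^I\psi$ for $|I|\leq N-5$, absorb the weight $\langle s+|x|\rangle$ into the null form via Lemma~\ref{lem:null} at the cost of one extra $\Gamma$, and then split the resulting bilinear terms as $L^2\times L^\infty$ using the $X$-norm bounds, exactly as in the proof of Lemma~\ref{lem:map01} (to which the paper simply refers). The paper's own proof is terse, so your fleshing out of the case split and the index budget $|I|+|J|\leq N-1$ is a faithful reconstruction; the only cosmetic quibble is that for $N=14$ the inequality $(N-1)/2\leq N-8$ fails as real numbers, though it holds once one uses integrality of the indices, which is all you need.
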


\begin{proof}

According to the result in Proposition \ref{prop:G1}, it suffices to show
$$
\big\| \langle t+|x| \rangle P_2^{\alpha \beta} \Gamma^I Q_{\alpha \beta} (m, n) \big\|
\lesssim 
(C_1 \eps)^2 \langle t \rangle^{- \delta},
\qquad
|I| \leq N-1.
$$
But this was done (not exactly the same but very similar) in the proof of Lemma \ref{lem:map01}.

The proof is done.
\end{proof}

Before we proceed further, we now decompose the wave component $\phi$ as
\bel{eq:000}
\phi = \phi^5 + \del_\gamma \phi^\gamma,
\ee
in which $\phi^5, \phi^\gamma$ are solutions to the following (linear) equations:
\be 
-\Box \phi^5 = 0,
\qquad
\big( \phi^5, \del_t \phi^5 \big)(0) = (u_0, u_1),
\ee
as well as
\bel{eq:001}
-\Box \phi^\gamma = P_1^{\alpha \gamma} n \del_\alpha m - P_1^{\gamma \beta} n \del_\beta m,
\qquad
\big( \phi^\gamma, \del_t \phi^\gamma \big)(0) = (0, 0).
\ee
In addition, we reveal the hidden null structure in the equation of \eqref{eq:001} with the new variables
\be  
\Phi^\gamma := \phi^\gamma + P_1^{\alpha \gamma} n \del_\alpha m - P_1^{\gamma \beta} n \del_\beta m,
\ee
which is the solution to
\bel{eq:002}
\aligned
-\Box \Phi^\gamma
= 
&P_1^{\alpha\gamma} (-\Box n) \del_\alpha m + P_1^{\alpha \gamma} n (-\Box +1 ) \del_\alpha m - P_1^{\alpha \gamma} Q_0 ( n, \del_\alpha m )
\\
&- P_1^{\gamma\beta} (-\Box n) \del_\beta m - P_1^{\gamma \beta} n (-\Box +1 ) \del_\beta m + P_1^{\gamma \beta} Q_0 ( n, \del_\beta m ).
\endaligned
\ee

%
%

\begin{lemma}\label{lem:map05}
We have
\bel{eq:map05}
\aligned
\big\| \Gamma^I \phi \big\|_{L^2}
&\lesssim
\Big( \eps + (C_1 \eps)^{3/2} \big) \langle t \rangle^\delta,
\qquad
&|I| \leq N.
\endaligned
\ee

\end{lemma}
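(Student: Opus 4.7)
The plan is to exploit the decomposition $\phi = \phi^5 + \del_\gamma \phi^\gamma$ set up in \eqref{eq:000} and \eqref{eq:001}, together with the hidden null structure displayed in \eqref{eq:002}. The point of the decomposition is to replace the undifferentiated $L^2$ bound on $\phi$ (which cannot be controlled directly by natural energy) by: (i) a homogeneous wave piece $\phi^5$, for which Lemma \ref{lem:linear} is tailor-made, and (ii) a divergence $\del_\gamma \phi^\gamma$, whose $L^2$ norm is reduced to the ordinary energy of $\phi^\gamma$ (or more precisely, of the modified quantity $\Phi^\gamma$).

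First, I would apply Lemma \ref{lem:linear} to $\Gamma^I \phi^5$, which still satisfies a homogeneous wave equation. The weighted $L^1 \cap L^2$ smallness of $u_0, u_1$ together with the commutator identity $\Gamma^I = \sum x^\alpha \del^{|J|} + \dots$ yields $\|\Gamma^I \phi^5\|_{L^2} \lesssim \eps \langle t\rangle^\delta$ for $|I| \leq N$. Then I would commute $\Gamma^I$ past $\del_\gamma$, absorbing the $[\Gamma^I, \del_\gamma]$ commutator into lower-order terms handled by induction; it then suffices to bound $\|\del \Gamma^I \phi^\gamma\|_{L^2}$.

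Next, using $\phi^\gamma = \Phi^\gamma - P_1^{\alpha\gamma} n \del_\alpha m + P_1^{\gamma\beta} n \del_\beta m$, I split the remaining estimate into an algebraic part and the energy of $\Phi^\gamma$. For the algebraic quadratic piece $\del \Gamma^I(n \del m)$, I expand by Leibniz and in each product place the factor of higher order in $L^2$ and the lower-order factor in $L^\infty$: using $|\Gamma^I n| \lesssim C_1\eps \langle t+|x|\rangle^{-1}$ ($|I| \leq N-5$), $|\del \Gamma^I m| \lesssim C_1\eps \langle t\rangle^{-1/2}$ ($|I| \leq N-4$), and the $X$-norm bounds $\|\del\Gamma^I m\|,\,\|\Gamma^I n\| \lesssim C_1\eps \langle t\rangle^\delta$, I expect $\|\del \Gamma^I (n \del m)\|_{L^2} \lesssim (C_1\eps)^2 \langle t\rangle^{-1/2+\delta}$. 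For $\Phi^\gamma$, I apply the energy estimate \eqref{eq:EE-wKG} to $\Gamma^I \Phi^\gamma$ with source given by $\Gamma^I$ of the right-hand side of \eqref{eq:002}. Each source term is of one of three types: $\Gamma^{I_1}(-\Box n)\,\Gamma^{I_2}(\del m)$, $\Gamma^{I_1} n\,\Gamma^{I_2}((-\Box+1)\del m)$, or $\Gamma^{I_1} Q_0(\Gamma^{I_2}n, \Gamma^{I_3}\del m)$. The $X$-norm bounds $\|(-\Box+1)\Gamma^I n\| \lesssim C_1\eps\langle t\rangle^{-1}$ and $\|\Box \Gamma^I m\| \lesssim C_1\eps\langle t\rangle^{-1}$ (for $|I| \leq N-1$), together with the pointwise versions for $|I| \leq N-6$, let me put the "wave-error" factor in $L^2$ whenever the other factor sits at low order (where Lemmas \ref{lem:map00} and \ref{lem:map03} supply $L^\infty$ control), and vice versa. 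For the $Q_0$ piece I insert Lemma \ref{lem:null} to produce the decisive factor $\langle t+|x|\rangle^{-1}$. Each source term is therefore controlled by $(C_1\eps)^2 \langle t\rangle^{-1+2\delta}$, which is time-integrable into $(C_1\eps)^2 \langle t\rangle^{2\delta}$; absorbing $(C_1\eps)^2 \leq (C_1\eps)^{3/2}$ and adjusting $\delta$ finishes the bound.

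The main obstacle I expect is the term $\Gamma^{I_1}(-\Box n)\,\Gamma^{I_2}\del m$, which looks merely quadratic since $n \in X$ is not an actual Klein-Gordon solution. The rescue is that the $X$ norm does bound $(-\Box+1)\Gamma^I n$ pointwise by $C_1\eps\langle t\rangle^{-2}$ for $|I|\leq N-6$ and in $L^2$ by $C_1\eps\langle t\rangle^{-1}$ for $|I|\leq N-1$, so writing $-\Box n = ((-\Box+1)n) - n$ and exploiting the pointwise decay $|\Gamma^I n| \lesssim C_1\eps\langle t+|x|\rangle^{-1}$ recovers an effectively cubic contribution after a careful high-low frequency split. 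Keeping track of which factor may be promoted to $L^\infty$ in each regime (requiring $N$ large enough that $|I_1| \leq N-6$ or $|I_2| \leq N-4$ always admits a winner) is the bookkeeping heart of the proof.
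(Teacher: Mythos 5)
Your high-level plan matches the paper up to the decomposition $\phi = \phi^5 + \del_\gamma\phi^\gamma$ and the use of Lemma~\ref{lem:linear} for $\phi^5$, but you then diverge: you route the $\phi^\gamma$ estimate through the null-corrected variable $\Phi^\gamma$ and the equation \eqref{eq:002}, whereas the paper applies the ordinary energy estimate \emph{directly} to the $\phi^\gamma$ equation \eqref{eq:001}, keeping the quadratic source $n\,\del m$ as is. The paper then splits $\Gamma^I(n\,\del m)$ into a term with $|I_1|\le N-5$ (put $\Gamma^{I_1} n$ in $L^\infty$ via the $\langle t+|x|\rangle^{-1}$ decay and $\del\Gamma^{I_2} m$ in $L^2$) and a term with $|I_2|\le N-9$ (put $\langle t\rangle^{\delta/2}\langle t-|x|\rangle^{3/4}\del\Gamma^{I_2} m$ in $L^\infty$ and $\Gamma^{I_1} n/(\langle t\rangle^{\delta/2}\langle t-|x|\rangle^{3/4})$ in $L^2$), then closes the $|I_1|$-large case by Cauchy--Schwarz in time against the square-integrated weighted norm appearing on the second line of Definition~\ref{def:X}. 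This is what makes the top order $|I|=N$ work.

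Your route has a genuine gap at the top order, and it occurs in two places. First, the energy estimate for $\Gamma^I\Phi^\gamma$ with $|I|=N$ produces source terms $\Gamma^{I_1}(-\Box n)\,\del\Gamma^{I_2} m$ and $\Gamma^{I_1} n\,\Gamma^{I_2}\big((-\Box+1)\del m\big)$. When $|I_1|=N$ (hence $|I_2|=0$), you need $\|(-\Box+1)\Gamma^{I_1} n\|_{L^2}$, but the $X$-norm only furnishes this for $|I_1|\le N-1$; your parenthetical ``(for $|I|\le N-1$)'' is precisely an admission of this, and there is no bound when $|I_1|=N$. Symmetrically, $\Gamma^{I_2}\big((-\Box+1)\del m\big)$ at $|I_2|\in\{N-1,N\}$ reduces (after commuting $\del$ through) to $\|\Box\Gamma^J m\|$ with $|J|=|I_2|+1\ge N$, again outside the $X$-norm's reach. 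Second, the algebraic correction $\phi^\gamma=\Phi^\gamma-P_1^{\alpha\gamma}n\,\del_\alpha m+\cdots$ forces you to bound $\|\del\Gamma^I(n\,\del m)\|_{L^2}$ with $|I|=N$; Leibniz produces $\Gamma^{J_1}n\cdot\del\del\Gamma^{J_2}m$ with $|J_2|=N$, i.e.\ $N+2$ derivatives on $m$, which the $X$-norm (controlling at most $\del\Gamma^I m$ for $|I|\le N$) cannot absorb. The $\Phi^\gamma$ trick is indeed used in the paper, but only in Lemma~\ref{lem:map09}, and there it is applied at orders $|J|\le N-4$ precisely so that these top-order losses never arise; at order $N$ one must stay with the raw $\phi^\gamma$ equation.
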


\begin{proof}
First, the result in Lemma \ref{lem:linear} implies 
\be 
\| \Gamma^I \phi^5 \|
\lesssim
\eps \langle t \rangle^\delta,
\qquad
|I| \leq N.
\ee
Taking into account the relation \eqref{eq:000}, it suffices to show
\be 
\sum_\gamma E (t, \Gamma^I \phi^\gamma)^{1/2}
\lesssim \eps + (C_1 \eps)^2 \langle t \rangle^\delta,
\qquad
|I| \leq N.
\ee

Applying the usual energy estimates for the $\phi^\gamma$ equation, we get
$$
E (t, \Gamma^I \phi^\gamma)^{1/2}
\lesssim
E (0, \Gamma^I \phi^\gamma)^{1/2}
+
\int_0^t \big\| \Gamma^I \big( P_1^{\alpha \gamma} n \del_\alpha m - P_1^{\gamma \beta} n \del_\beta m \big) \big\| \, dt'.
$$
Successively, we have (recall that $N \geq 14$)
$$
\aligned
&\big\| \Gamma^I \big( P_1^{\alpha \gamma} n \del_\alpha m - P_1^{\gamma \beta} n \del_\beta m \big) \big\|
\\
\lesssim
& \sum_{|I_1| + |I_2| \leq N} \big\| \Gamma^{I_1} n \del \Gamma^{I_2} m \big\|
\\
\lesssim
& \sum_{\substack{|I_1| \leq N-5\\ |I_2| \leq N}} \big\| \Gamma^{I_1} n \big\|_{L^\infty} \big\| \del \Gamma^{I_2} m \big\| 
+
\sum_{\substack{|I_1|\leq N\\  |I_2| \leq N-9}} \Big\| {\Gamma^{I_1} n \over \langle t \rangle^{\delta/2} \langle t-|x| \rangle^{3/4} } \Big\| \big\| \langle t \rangle^{\delta/2} \langle t-|x| \rangle^{3/4} \del \Gamma^{I_2} m \big\|_{L^\infty}
\\
\lesssim
& (C_1 \eps)^2 \langle t \rangle^{-1+\delta}
+
C_1 \eps \langle t \rangle^{-1/2+\delta/2} \sum_{|I_1|\leq N} \Big\| {\Gamma^{I_1} n \over \langle t \rangle^{\delta/2} \langle t-|x| \rangle^{3/4} } \Big\|.
\endaligned
$$
Thus we have
$$
\aligned
&E (t, \Gamma^I \phi^\gamma)^{1/2}
\\
\lesssim
&\eps + (C_1 \eps)^2 \langle t \rangle^\delta
+
\sum_{|I_1|\leq N} \Big( \int_0^t \Big\| {\Gamma^{I_1} n \over \langle t' \rangle^{\delta/2} \langle t'-|x| \rangle^{3/4} } \Big\|^2 \, dt' \Big)^{1/2} \Big( \int_0^t \langle t'\rangle^{-1+\delta} \, dt' \Big)^{1/2}
\\
\lesssim
&\eps + (C_1 \eps)^2 \langle t \rangle^\delta.
\endaligned
$$

\end{proof}

\begin{lemma}\label{lem:map08}
We have
\bel{eq:map08} 
\aligned
\big\| L_0 \Gamma^I \phi \big\|_{L^2}
\lesssim
\eps + (C_1 \eps)^{3/2} t^{\delta},
\qquad
|I| \leq N - 6,
\\
\big\| L_0 \Gamma^I \phi \big\|_{L^2}
\lesssim
\eps + (C_1 \eps)^{3/2} t^{1/2 + \delta},
\qquad
|I| \leq N - 2.
\endaligned
\ee

\end{lemma}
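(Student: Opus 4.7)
The plan is to apply the conformal energy estimate \eqref{eq:conformal-EE} to $\Gamma^I\phi$ and then extract $\|L_0\Gamma^I\phi\|_{L^2}$ by exploiting the identity $S=L_0$. Concretely, since $E_{con}(t,\Gamma^I\phi)$ contains the term $\|S\Gamma^I\phi+\Gamma^I\phi\|^2=\|L_0\Gamma^I\phi+\Gamma^I\phi\|^2$, the triangle inequality yields
$$
\|L_0\Gamma^I\phi\|_{L^2}\leq E_{con}(t,\Gamma^I\phi)^{1/2}+\|\Gamma^I\phi\|_{L^2},
$$
where the second term is controlled by Lemma \ref{lem:map05} and fits within both target growth rates. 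It thus suffices to bound $E_{con}(t,\Gamma^I\phi)^{1/2}$. Since $\Gamma^I$ commutes with $\Box$, the wave $\Gamma^I\phi$ satisfies $-\Box\Gamma^I\phi=\Gamma^I[P_1^{\alpha\beta}Q_{\alpha\beta}(m,n)]$, and the initial conformal energy is bounded by $\eps$ via the weighted Sobolev norms on $(u_0,u_1)$ in Theorem \ref{thm:main}, so
$$
E_{con}(t,\Gamma^I\phi)^{1/2}\lesssim\eps+\int_0^t\big\|\langle t'+|x|\rangle\,\Gamma^I Q_{\alpha\beta}(m,n)\big\|\,dt'.
$$

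To estimate the source, I use the null form bound $\langle t+|x|\rangle|Q_{\alpha\beta}(u,v)|\lesssim|\Gamma u\,\del v|+|\Gamma v\,\del u|$ from Lemma \ref{lem:null} together with the commutator estimates to reduce to sums over $|I_1|+|I_2|\leq|I|$ of $\|\Gamma\Gamma^{I_1}m\,\del\Gamma^{I_2}n\|$ and its $m\leftrightarrow n$ twin. In the low-regularity case $|I|\leq N-6$, both $|I_1|,|I_2|\leq N-6$, so the Klein-Gordon factor (either $\del\Gamma^{I_2}n$ or $\Gamma\Gamma^{I_1}n$, recalling $\del\in V$) admits the pointwise bound $\lesssim C_1\eps\langle t+|x|\rangle^{-1}$ from the $X$-norm version of Lemma \ref{lem:map03}, while the wave factor in $L^2$ is controlled by $C_1\eps$ using the uniform energy bound for $|I|\leq N-1$ in the definition of $X$. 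The product integrand is then $\lesssim(C_1\eps)^2\langle t\rangle^{-1}$, whose time integral is $\lesssim(C_1\eps)^2\log\langle t\rangle\lesssim(C_1\eps)^{3/2}\langle t\rangle^\delta$.

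In the higher-regularity case $|I|\leq N-2$, I split according to whether $|I_1|\leq N-5$ or $|I_1|\geq N-4$. When $|I_1|\leq N-5$, I place $\Gamma\Gamma^{I_1}m$ in $L^\infty$ with the slower wave decay $\lesssim C_1\eps\langle t\rangle^{-1/2+\delta/2}$ from Lemma \ref{lem:map00}, while $\del\Gamma^{I_2}n$ lies in $L^2$ with bound $C_1\eps$ (valid since $|I_2|\leq N-2<N-1$); the resulting integrand is $(C_1\eps)^2\langle t\rangle^{-1/2+\delta/2}$, integrating to $(C_1\eps)^2\langle t\rangle^{1/2+\delta/2}\lesssim(C_1\eps)^{3/2}\langle t\rangle^{1/2+\delta}$. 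When $|I_1|\geq N-4$, one has $|I_2|\leq 2$ and can place $\del\Gamma^{I_2}n$ in $L^\infty$ by $\lesssim C_1\eps\langle t\rangle^{-1}$, yielding a strictly better contribution. The symmetric terms with $m,n$ swapped are treated identically.

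The main obstacle is the sharp growth rate $\langle t\rangle^\delta$ in the case $|I|\leq N-6$: to obtain it, one must route every term through the faster Klein-Gordon pointwise decay $\langle t+|x|\rangle^{-1}$ of $\Gamma^I n$ (Lemma \ref{lem:map03}) rather than the slower wave decay $\langle t\rangle^{-1/2}$. This is possible precisely because the vector-field budget is low enough on both factors so that the Klein-Gordon component can always be placed in $L^\infty$; any attempt to put the wave component in $L^\infty$ would lose the $\langle t\rangle^{1/2}$ factor and produce only the weaker second bound.
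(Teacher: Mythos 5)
Your proposal follows the same route as the paper: conformal energy estimate for $\Gamma^I\phi$, the null form bound from Lemma~\ref{lem:null}, an $L^2\times L^\infty$ H\"older pairing, and then the triangle inequality against $\|\Gamma^I\phi\|$ from Lemma~\ref{lem:map05} to isolate $\|L_0\Gamma^I\phi\|$. The paper writes out only the case $|I|\leq N-6$ and dismisses $|I|\leq N-2$ as ``similar,'' so your explicit index splitting there is a welcome addition.

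One inaccuracy worth noting in the $|I|\leq N-6$ case: you claim the wave factor in $L^2$ is controlled uniformly by $C_1\eps$ via the energy bound for $|I|\leq N-1$. That is only true for the translation-derivative piece $\|\del\Gamma^{I_1}m\|$, which $E_{gst}$ directly controls. The null form estimate also produces $|\Gamma\Gamma^{I_1}m|$ with $\Gamma$ a rotation or Lorentz boost, and for those the only $L^2$ bound in the definition of $X$ (through $\langle t\rangle^{-\delta}\|\Gamma^J u\|$ for $|J|\leq N$) grows like $\langle t\rangle^\delta$. This is precisely why the paper records the integrand as $(C_1\eps)^2\langle t'\rangle^{-1+\delta}$ rather than $(C_1\eps)^2\langle t'\rangle^{-1}$. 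The time integral still yields $\langle t\rangle^\delta$, so your final conclusion is unaffected, but the intermediate claim of a uniform bound should be corrected.
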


\begin{proof}
We only provide the proof for the case $|I| \leq N-6$, and the case of $|I| \leq N-2$ can be shown in the similar way.

We apply the conformal energy estimates \eqref{eq:conformal-EE} on the equation
$$
- \Box \Gamma^I \phi =  P_1^{\alpha \beta} \Gamma^I Q_{\alpha \beta} (m, n),
$$
with $|I| \leq N-6$, to get
$$
E_{con} (t, \Gamma^I \phi)^{1/2}
\lesssim
E_{con} (0, \Gamma^I)^{1/2}
+
\int_0^t \big\| \langle t'+|x| \rangle P_1^{\alpha \beta} \Gamma^I Q_{\alpha \beta} (m, n) \big\| \, dt'.
$$
We note that
$$
\aligned
\big\| \langle t'+|x| \rangle P_1^{\alpha \beta} \Gamma^I Q_{\alpha \beta} (m, n) \big\|
\lesssim
\sum_{|I| \leq N-6} \big\| \Gamma \Gamma^I m \big\| \sum_{|I| \leq N-6} \big\| \Gamma \Gamma^I n \big\|_{L^\infty}
\lesssim
(C_1 \eps)^2 \langle t' \rangle^{-1+\delta},
\endaligned
$$
which further yields
$$
E_{con} (t, \Gamma^I \phi)^{1/2}
\lesssim
\eps + (C_1 \eps)^2 \langle t \rangle^\delta.
$$
Thus the proof is done after recalling the estimates in Lemma \ref{lem:map05}.

\end{proof}

\begin{lemma}\label{lem:map09}
We have
\be 
\big| \del \Gamma^I \phi \big|
\lesssim
\big( \eps + (C_1 \eps)^{3/2} \big) \langle t-|x| \rangle^{-3/4} \langle t \rangle^{-1/2},
\qquad
|I| \leq N-9.
\ee

\end{lemma}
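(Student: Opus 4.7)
The plan is to split $\RR^2$ into the exterior region $\{|x|\ge 2t\}$ and the interior region $\{|x|\le 2t\}$, and in each region exploit a different structural feature of $\phi$.

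In the exterior region, the identity $\langle t-|x|\rangle|\del w|\lesssim |L_0 w|+|\Gamma w|$ from \eqref{eq:du} applied to $w=\Gamma^I\phi$ is the central tool. Combining Klainerman--Sobolev (Proposition \ref{prop:K-S}) with the $L^2$ bounds of Lemmas \ref{lem:map05} and \ref{lem:map08} gives $|L_0\Gamma^I\phi|,|\Gamma^{I+1}\phi|\lesssim \langle t\rangle^{-1/2+\delta}$ (costing three derivatives, hence the constraint $|I|\le N-9$). Thus $|\del\Gamma^I\phi|\lesssim \langle t-|x|\rangle^{-1}\langle t\rangle^{-1/2+\delta}$, and since $\langle t-|x|\rangle\gtrsim\langle t\rangle$ in the exterior, we rewrite $\langle t-|x|\rangle^{-1}=\langle t-|x|\rangle^{-3/4}\langle t-|x|\rangle^{-1/4}\lesssim\langle t-|x|\rangle^{-3/4}\langle t\rangle^{-1/4}$, absorbing the $\langle t\rangle^\delta$ loss provided $\delta\le 1/4$.

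In the interior region, I would invoke the divergence decomposition $\phi=\phi^5+\del_\gamma\phi^\gamma$ of \eqref{eq:000}. For the homogeneous part $\phi^5$, each $\Gamma^I\phi^5$ is itself a free wave with initial data controlled by the weighted $L^1\cap L^2$ norms of $(u_0,u_1)$; Lemma \ref{lem:linear2} then yields $|L_0\Gamma^I\phi^5|,|\Gamma^{I+1}\phi^5|\lesssim \eps\langle t\rangle^{-1/2}$, and \eqref{eq:du} gives $|\del\Gamma^I\phi^5|\lesssim \eps\langle t-|x|\rangle^{-1}\langle t\rangle^{-1/2}$, which is stronger than the target since $\langle t-|x|\rangle^{-1}\le\langle t-|x|\rangle^{-3/4}$. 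For the divergence piece, notice that $\del\Gamma^I\del_\gamma\phi^\gamma$ is a \emph{second derivative} $\del^2\Gamma^I\phi^\gamma$, which is precisely the setting of Lemma \ref{lem:ddu}. I would apply that lemma to $\Gamma^I\phi^\gamma$ in $\{|x|\le 2t\}$, verifying the two hypotheses: the pointwise bound $|{-}\Box\Gamma^I\phi^\gamma|\lesssim\langle t\rangle^{-3/2}$ follows by distributing $\Gamma^I$ across the product in \eqref{eq:001} and using $|\Gamma^J n|\lesssim\langle t\rangle^{-1}$ and $|\del\Gamma^J m|\lesssim\langle t\rangle^{-1/2}$ from the $X$-norm; the first-derivative bound $|\del\Gamma^J\phi^\gamma|+|\del\Gamma^{J+1}\phi^\gamma|\lesssim\langle t\rangle^{-1/2}$ is obtained from the standard wave energy estimate applied to \eqref{eq:001} followed by Klainerman--Sobolev. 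Lemma \ref{lem:ddu} then yields $|\del\del_\gamma\Gamma^I\phi^\gamma|\lesssim\langle t-|x|\rangle^{-1}\langle t\rangle^{-1/2}$ in $\{|x|\le 2t\}$, which dominates $\langle t-|x|\rangle^{-3/4}\langle t\rangle^{-1/2}$ after bounding $\langle t-|x|\rangle^{-1}\le\langle t-|x|\rangle^{-3/4}$.

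The main obstacle is securing the bound $|\del\Gamma^J\phi^\gamma|\lesssim\langle t\rangle^{-1/2}$ \emph{without a $\langle t\rangle^\delta$ loss}, because a naive energy estimate for \eqref{eq:001} only gives $\|n\,\del m\|\lesssim\langle t\rangle^{-1}$, which integrates to a logarithmic or small polynomial growth in $\|\del\Gamma^J\phi^\gamma\|$. The resolution is to pass to the auxiliary variable $\Phi^\gamma=\phi^\gamma+P_1^{\alpha\gamma}n\del_\alpha m-P_1^{\gamma\beta}n\del_\beta m$ of \eqref{eq:002}, whose source decomposes into $(-\Box n)\del m$, $n(-\Box+1)\del m$, and $Q_0(n,\del m)$, each enjoying an extra power of decay (via the Klein--Gordon equation for $n$, the null form in the $\phi$-equation, and the exact null form $Q_0$ respectively), so the energy of $\del\Gamma^J\Phi^\gamma$ is uniformly bounded in $t$. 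Writing $\phi^\gamma=\Phi^\gamma-(\text{bilinear in }n,\del m)$ then transfers this to a uniform bound on $|\del\Gamma^J\phi^\gamma|$ after estimating the bilinear correction pointwise. In the narrow sub-region $\langle t-|x|\rangle\le 2$, where the target is $\sim\langle t\rangle^{-1/2}$, the uniform Klainerman--Sobolev bound arising from the unweighted $E_{gst}$ estimate at level $|I|\le N-1$ inside the $X$-norm already suffices, so the loss is relevant only in the transitional zone where the above refinement closes the gap.
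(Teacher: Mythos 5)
Your proposal follows essentially the same route as the paper's proof: a global estimate $|\del\Gamma^I\phi|\lesssim\langle t-|x|\rangle^{-1}\langle t\rangle^{-1/2+\delta}$ obtained from Klainerman--Sobolev together with the $L^2$ bounds on $L_0\Gamma^I\phi$ and $\Gamma\Gamma^I\phi$ (Lemmas \ref{lem:map05}, \ref{lem:map08}), plus a sharper interior estimate $\lesssim\langle t-|x|\rangle^{-1}\langle t\rangle^{-1/2}$ on $\{|x|\le 2t\}$ via the decomposition $\phi=\phi^5+\del_\gamma\phi^\gamma$, Lemma \ref{lem:linear2} for $\phi^5$, Lemma \ref{lem:ddu} for $\phi^\gamma$, and the auxiliary unknown $\Phi^\gamma$ to obtain the uniform-in-time energy bound that feeds the hypotheses of Lemma \ref{lem:ddu}. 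Your explicit exterior/interior split and the final interpolation $\langle t-|x|\rangle^{-1}\le\langle t-|x|\rangle^{-3/4}\langle t\rangle^{-1/4}$ (requiring $\delta\le 1/4$) are precisely how the paper combines its two intermediate bounds, so this is the same argument presented in slightly rearranged form.
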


\begin{proof}
It suffices to show the following two types of estimates
\bel{eq:wave001}
\big| \del \Gamma^I \phi \big|
\lesssim 
\big( \eps + (C_1 \eps)^{3/2} \big) \langle t-|x| \rangle^{-1} \langle t \rangle^{-1/2 + \delta},
\qquad
|I| \leq N-9,
\ee
and
\bel{eq:wave002}
\big| \Gamma^I \phi \big|
\lesssim 
\big( \eps + (C_1 \eps)^{3/2} \big) \langle t-|x| \rangle^{-1} \langle t \rangle^{-1/2},
\qquad
\text{in } \{ (t, x) : |x| \leq 2t \},
\qquad
|I| \leq N-9,
\ee

For the first estimate \eqref{eq:wave001}, the estimates \eqref{eq:map05}, \eqref{eq:map08}, and the commutator estimates imply
$$
\sum_{|I_1| \leq 3, |I_2| \leq N-9} 
\big(  \big\| \Gamma^{I_1} L_0 \Gamma^{I_2} \phi \big\| 
+ \big\| \Gamma^{I_1} \Gamma \Gamma^{I_2} \phi \big\| \big)
\lesssim
\big( \eps + (C_1 \eps)^{3/2} \big) \langle t \rangle^\delta.
$$ 
Then we apply the Klainerman-Sobolev inequality \eqref{eq:K-S} to get
$$
\big| L_0 \Gamma^I \phi \big| + \big| \Gamma \Gamma^I \phi \big|
\lesssim
\big( \eps + (C_1 \eps)^{3/2} \big) \langle t \rangle^{-1/2 + \delta},
\qquad
|I| \leq N-9.
$$
In junction with the fact
$$
|\del \Gamma^I \phi |
\lesssim \langle t-|x| \rangle^{-1} \big( \big| L_0 \Gamma^I \phi \big| + \big| \Gamma \Gamma^I \phi \big|  \big),
$$
we arrive at \eqref{eq:wave001}.

Next, we derive \eqref{eq:wave002}, and we only need to consider the case $|x| \leq 2t$. Recall the decomposition \eqref{eq:000} (and the commutator estimates), and we observe that it suffices to show
$$
\big| \del \Gamma^I \phi^5 \big|
+
\sum_\gamma \big| \del \del \Gamma^I \phi^\gamma \big|
\lesssim 
\big( \eps + (C_1 \eps)^{3/2} \big) \langle t-|x| \rangle^{-1} \langle t \rangle^{-1/2},
\qquad
|I| \leq N-9.
$$
Thanks to Lemma \ref{lem:linear2}, we get
$$
\big| L_0 \Gamma^I \phi^5 \big| + \big| \Gamma \Gamma^I \phi^5 \big|
\lesssim
\eps \langle t\rangle^{-1/2},
$$
and hence
$$
\big| \del \Gamma^I \phi^5 \big|
\lesssim
\eps  \langle t-|x| \rangle^{-1} \langle t \rangle^{-1/2},
\qquad
|I| \leq N-9.
$$
On the other hand, consider the definition of $\Phi^\gamma$ and the equation \eqref{eq:002}
$$
\aligned
\Phi^\gamma 
= & \phi^\gamma + P_1^{\alpha \gamma} n \del_\alpha m - P_1^{\gamma \beta} n \del_\beta m,
\\
-\Box \Phi^\gamma
= 
&P_1^{\alpha\gamma} (-\Box n) \del_\alpha m + P_1^{\alpha \gamma} n (-\Box +1 ) \del_\alpha m - P_1^{\alpha \gamma} Q_0 ( n, \del_\alpha m )
\\
&- P_1^{\gamma\beta} (-\Box n) \del_\beta m - P_1^{\gamma \beta} n (-\Box +1 ) \del_\beta m + P_1^{\gamma \beta} Q_0 ( n, \del_\beta m ),
\endaligned
$$
and the usual energy estimates easily give
$$
\sum_\gamma E (t, \Gamma^J \Phi^\gamma)^{1/2}
\lesssim
\eps + (C_1 \eps)^{3/2},
\qquad
|J| \leq N-4,
$$
which further yields
$$
\sum_\gamma \big\| \del \Gamma^J \phi^\gamma \big\|
\lesssim \eps + (C_1 \eps)^{3/2},
\qquad
|J| \leq N-4.
$$
Again, we apply the Klainerman-Sobolev inequality \eqref{eq:K-S} (and the commutator estimates) to obtain the sup-norm bounds
$$
\sum_\gamma \big| \del \Gamma^J \phi^\gamma \big|
\lesssim \big( \eps + (C_1 \eps)^{3/2} \big) \langle t\rangle^{-1},
\qquad
|J| \leq N-7.
$$
Finally, Lemma \ref{lem:ddu} implies
$$
\sum_\gamma
\big| \del \del \Gamma^I \phi^\gamma \big|
\lesssim \big( \eps + (C_1 \eps)^{3/2} \big) \langle t-|x| \rangle \langle t\rangle^{-1},
\qquad
|I| \leq N-9.
$$
Till now, the proof is complete.
\end{proof}

We are now ready to show Proposition \ref{prop:mapping1}.

\begin{proof}[Proof of Proposition \ref{prop:mapping1}]
By carefully choosing $C_1 \gg 1$ large enough and $\eps \ll 1$ sufficiently small, we get from Lemmas \ref{lem:map01}--\ref{lem:map09} that
\be 
\big\| (\phi, \psi ) \big\|_X
\leq {1\over 2} C_1 \eps,
\ee
and hence $(\phi, \psi) \in X$.

\end{proof}

\subsection{Contraction mapping and the global existence result}

We now want to show that the solution mapping $T$ is also a contraction mapping.

\begin{proposition}
$T$ is a contraction mapping from $X$ to itself, i.e.
\be 
\big\| (\phi-\phi', \psi-\psi') \big\|_X
\leq 
{1\over 2} \big\| (m-m', n-n') \big\|_X,
\ee
in which $(m, n), (m', n') \in X$, and $(\phi, \psi) = T(m, n), (\phi', \psi') = T(m', n')$.
\end{proposition}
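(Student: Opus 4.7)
Set $\delta m := m - m'$, $\delta n := n - n'$, $\delta\phi := \phi - \phi'$, $\delta\psi := \psi - \psi'$. Since $T(m,n)$ and $T(m',n')$ carry the same prescribed Cauchy data, subtraction and the bilinearity of $Q_{\alpha\beta}$ yield the zero-initial-data linear system
\begin{align*}
-\Box \delta\phi &= P_1^{\alpha\beta}\bigl[Q_{\alpha\beta}(\delta m, n) + Q_{\alpha\beta}(m', \delta n)\bigr], \\
(-\Box + 1)\delta\psi &= P_2^{\alpha\beta}\bigl[Q_{\alpha\beta}(\delta m, n) + Q_{\alpha\beta}(m', \delta n)\bigr], \\
\bigl(\delta\phi,\del_t\delta\phi,\delta\psi,\del_t\delta\psi\bigr)(0,\cdot) &= (0,0,0,0).
\end{align*}

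The strategy is to rerun the argument of Proposition \ref{prop:mapping1} verbatim, splitting each bilinear term so that one factor is measured by $\|(m,n)\|_X$ or $\|(m',n')\|_X \leq C_1\eps$ while the other is measured by $\|(\delta m, \delta n)\|_X$. Each of Lemmas \ref{lem:map01}--\ref{lem:map09} was the consequence of (initial-data contribution) plus a bilinear $L^2$ bound on $\Gamma^I Q_{\alpha\beta}$; now the initial-data contribution vanishes and every bilinear product contributes a term proportional to $C_1\eps \cdot \|(\delta m, \delta n)\|_X$ rather than $(C_1\eps)^2$. In particular the ghost weight estimates \eqref{eq:ghost} and \eqref{eq:gst7} handle the $E_{gst}$ and $E_{gst,1}$ components of $\|(\delta\phi,\delta\psi)\|_X$, Proposition \ref{prop:G1} reproduces the sup-norm bound on $\delta\psi$, the conformal energy estimate \eqref{eq:conformal-EE} gives the $\|L_0\Gamma^I\delta\phi\|$ piece, and the Klainerman--Sobolev inequality \eqref{eq:K-S} together with Lemma \ref{lem:ddu} delivers the pointwise $\del\delta\phi$ bound.

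The main obstacle is the weightless $L^2$ estimate on $\delta\phi$, where previously one had to invoke the hidden divergence form via Lemma \ref{lem:linear} and the decomposition \eqref{eq:000}. Here one writes $\delta\phi = \delta\phi^5 + \del_\gamma \delta\phi^\gamma$; the homogeneous piece $\delta\phi^5$ now vanishes identically because the data are zero, and the auxiliary unknowns
\begin{equation*}
\delta\Phi^\gamma := \delta\phi^\gamma + P_1^{\alpha\gamma}\bigl(\delta n\,\del_\alpha m + n'\,\del_\alpha\delta m\bigr) - P_1^{\gamma\beta}\bigl(\delta n\,\del_\beta m + n'\,\del_\beta\delta m\bigr)
\end{equation*}
satisfy wave equations whose right-hand sides are structurally identical to \eqref{eq:002}, with the quadratic terms $(-\Box\,\cdot)\del\,\cdot$, $\,\cdot\,(-\Box+1)\del\,\cdot$ and $Q_0(\cdot,\cdot)$ split bilinearly between difference and reference factors. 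Applying Lemma \ref{lem:map02b} and the usual energy estimate \eqref{eq:EE-wKG} to $\delta\Phi^\gamma$ then yields $\|\Gamma^I\delta\phi\| \lesssim C_1\eps\,\langle t\rangle^\delta\,\|(\delta m,\delta n)\|_X$ for $|I|\leq N$, which is the analog of Lemma \ref{lem:map05}.

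Collecting the estimates, one obtains a universal constant $C$ with $\|(\delta\phi,\delta\psi)\|_X \leq C\,C_1\eps\,\|(\delta m,\delta n)\|_X$; shrinking $\eps_0$ if necessary so that $C\,C_1\eps \leq 1/2$ (compatibly with the smallness already fixed in Proposition \ref{prop:mapping1}) gives the contraction inequality. Banach's fixed point theorem applied in the closed ball of radius $C_1\eps$ of $X$ then produces a unique fixed point $(u,v) = T(u,v)$, which is the global-in-time solution of \eqref{eq:model2d}--\eqref{eq:ID2d}, completing Theorem \ref{thm:main}.
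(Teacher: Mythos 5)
The paper's own proof of this proposition is deliberately omitted (``the proof for Proposition \ref{prop:mapping1} can also be applied, possibly shrinking $\eps$''), so what you are supplying is precisely the elaboration the author intends: subtract the two linear systems, use the bilinearity of $Q_{\alpha\beta}$ to split each source into $Q_{\alpha\beta}(\delta m,n)+Q_{\alpha\beta}(m',\delta n)$, note the vanishing initial data, and rerun each of Lemmas \ref{lem:map01}--\ref{lem:map09} with one factor in $\|(m,n)\|_X$ or $\|(m',n')\|_X\le C_1\eps$ and the other in $\|(\delta m,\delta n)\|_X$. The resulting contraction constant $C\,C_1\eps$, absorbed by further shrinking $\eps$, matches the paper's intent, and your handling of the ghost-weight, conformal-energy, and Klainerman--Sobolev components is correct.

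There is, however, one step where you deviate from the paper in a way that costs a derivative. For the analogue of Lemma \ref{lem:map05} (the weightless $L^2$ bound on $\Gamma^I\delta\phi$ for $|I|\leq N$) you propose to run the energy estimate on $\delta\Phi^\gamma$, whose source is the bilinear split of \eqref{eq:002} and hence contains $Q_0(n,\del_\alpha m)$ and $n(-\Box+1)\del_\alpha m$. Applying $\Gamma^I$ with $|I|=N$ and either the null estimate \eqref{eq:null000} or the brute-force bound produces factors of the type $\Gamma\Gamma^{I_2}\del_\alpha m$ or $\Box\del_\alpha\Gamma^{I_2}m$ with $|I_2|\leq N$, i.e.\ $N+2$ derivatives of the wave argument, whereas $\|(m,n)\|_X$ only controls $\|\del\Gamma^I m\|$ for $|I|\leq N$ (and $\|\Box\Gamma^I m\|$ only for $|I|\leq N-1$). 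This is exactly why the paper reserves the $\Phi^\gamma$-formulation for Lemma \ref{lem:map09}, at the reduced order $|J|\leq N-4$; for the top-order $L^2$ bound, the paper's Lemma \ref{lem:map05} applies the usual energy estimate \eqref{eq:EE-wKG} directly to $\phi^\gamma$, whose source $P_1^{\alpha\gamma}n\,\del_\alpha m-P_1^{\gamma\beta}n\,\del_\beta m$ costs at most $N+1$ derivatives on $m$ and whose integrability is supplied by the Klein--Gordon decay of $n$ rather than by null structure. The fix is to mirror that: estimate $\delta\phi^\gamma$ directly from the split source $P_1^{\alpha\gamma}(\delta n\,\del_\alpha m+n'\del_\alpha\delta m)-\cdots$, with the two Cauchy--Schwarz splits as in the paper's proof. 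With this correction the argument is sound.
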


\begin{proof}
The proof for Proposition \ref{prop:mapping1} can also be applied (we might further shrink the size of the initial data $\eps$ if needed), so we omit it.

\end{proof}

\begin{proof}[Proof of Theorem \ref{thm:main}]
By the Banach fixed point theorem, we know the mapping $T$ has a unique fixed point, which is the solution to the system \eqref{eq:model2d}
As for the pointwise decay estimates \eqref{eq:thm-decay}, they can be obtained from the definition \ref{def:X} and Lemma \ref{lem:map00}.
\end{proof}





\end{document}